\theoremstyle{plain}
 \newtheorem{theorem}{Theorem}[section]
 \newtheorem{lemma}[theorem]{Lemma}
 \newtheorem{corollary}[theorem]{Corollary}
 \newtheorem{proposition}[theorem]{Proposition}
 \newtheorem{example}[theorem]{Example}
 \newtheorem{Definition}[theorem]{Definition}
 \newtheorem{condition}[theorem]{Condition}
\theoremstyle{remark}
\newtheorem{remark}[theorem]{Remark}
 \def\beqlb{\begin{eqnarray}}\def\eeqlb{\end{eqnarray}}
 \def\beqnn{\begin{eqnarray*}}\def\eeqnn{\end{eqnarray*}}
 \def\<{\langle}\def\>{\rangle}
 \def\eqref#1{{\rm(\ref{#1})}}
 \def\qed{\hfill$\Box$\medskip}
\def\<{\left<}\def\>{\right>}
\font\tenmsbm=msbm10\textfont
\font\sevenmsbm=msbm7
\def\<{\left<}\def\>{\right>}
\def\({\left(}\def\){\right)}
\title[$\Lambda$-Wright-Fisher processes with selection \& simple EFC processes]{ \bf  On the boundary classification of $\Lambda$-Wright-Fisher processes with frequency-dependent selection}
\newcommand{\ddr}{\mathrm{d}}
\keywords{{$\Lambda$-Wright-Fisher process}, {selection}, {$\Lambda$-coalescent}, {fragmentation}, {explosion}, {coming down from infinity}, {entrance boundary}, {regular boundary}, {continuous-time Markov chains}.}
\subjclass[2010]{60J80, 60J70, 60J90, 92D25}
\begin{document}
\maketitle
\centerline{\large  Cl\'ement Foucart \footnote{Universit\'e  Sorbonne Paris Nord and Paris 8, Laboratoire Analyse, G\'eom\'etrie $\&$ Applications, UMR 7539. Institut Galil\'ee, 99 avenue J.B. Cl\'ement, 93430 Villetaneuse, France, foucart@math.univ-paris13.fr} and Xiaowen Zhou \footnote{Department of Mathematics and Statistics, Concordia University, 1455 De Maisonneuve Blvd. W., Montreal, Canada, xiaowen.zhou@concordia.ca}
}
\begin{center}
\today

\end{center}
\begin{abstract}
We construct extensions of the pure-jump $\Lambda$-Wright-Fisher processes with frequency-dependent selection ($\Lambda$-WF processes with selection) beyond their first passage time at the boundary $1$. We show that they satisfy some duality relationships with the block counting process of simple exchangeable fragmentation-coalescence processes (EFC).  
One-to-one correspondences between the nature of the boundary $1$ of the $\Lambda$-WF process with selection and the boundary $\infty$ of the block counting process are established. New properties for the $\Lambda$-WF processes with selection and the block counting processes of the simple EFC processes are deduced from these correspondences. Some conditions are provided for the selection to be either weak enough for boundary $1$ to be an exit boundary or strong enough for $1$ to be an entrance boundary. When the measure $\Lambda$ and the selection mechanism satisfy some regular variation properties, conditions are found in order that the extended $\Lambda$-WF process with selection makes excursions out from the boundary $1$ before getting absorbed at $0$. In the latter process, $1$ is a transient regular reflecting boundary. This corresponds to a new phenomenon for the deleterious allele which can spread into the population in a set of times of zero Lebesgue measure, before vanishing in  finite time almost surely.
\end{abstract}

\section{Introduction}
The $\Lambda$-Wright-Fisher processes represent the evolution of the frequency of a \textit{neutral} allele (or type) in a two-allele model evolving by resampling. A well-known result in the coalescent theory states that any $\Lambda$-Wright-Fisher process satisfies a certain \textit{duality relationship} with a $\Lambda$-coalescent, see Donnelly and Kurtz \cite{DonnKurtz} and Bertoin and Le Gall \cite{MR1990057}.  A consequence of this duality is that the $\Lambda$-Wright-Fisher process gets absorbed in finite time at one of its boundaries if and only if the $\Lambda$-coalescent comes down from infinity. This event, called \textit{fixation} in the genetics terminology mirrors the fact that even though no allele presents any advantage, the resampling of the population can reduce allelic diversity by the law of chance.  This phenomenon is called \textit{random genetic drift}. We refer the reader  to, for instance, Etheridge's book \cite{etheridge2011}. In this article, a generalisation of the $\Lambda$-Wright-Fisher process taking into account an extra force of selection is considered. 

Selection dynamics are typically modelled deterministically, so that the frequency of a type evolves both due to the resampling and due to a frequency-dependent term modeling how deleterious  the allele considered is. 
Recently  Gonz\'alez and Span\`o \cite{zbMATH06873684} have established that discrete Wright-Fisher models with frequency-dependent selection can be rescaled to converge towards certain Markov processes called $\Xi$-Wright-Fisher process with frequency-dependent selection. In the latter work the approach of Neuhauser and Krone \cite{zbMATH01078990} for modeling logistic selection is generalized by relating selection events with multiple (and not only binary) branching events in the ancestral genealogy. We shall focus on the simpler setting of $\Lambda$-Wright-Fisher process with frequency-dependent selection ($\Lambda$-WF processes with selection). In those processes, resampling events are simple in the sense that they only involve one fraction of the population.

Let $\Lambda$ be a finite measure over $(0,1)$. Let $\mu$ be a finite measure on $\mathbb{N}:=\{1,2,\ldots\}$. Denote by $f$ the generating function of the probability measure $\xi(\cdot)=\mu(\cdot)/\mu(\mathbb{N})$ over $\mathbb{N}$, for all $x\in [0,1]$, $f(x):=\sum_{k=1}^{\infty}x^{k}\xi(k)$. Consider the following stochastic equation
\begin{align}\label{SDEselecintro}X_t(x)=x+\int_{0}^{t}\int_{0}^{1}\int_{0}^{1}z\left(\mathbbm{1}_{\{v\leq X_{s-}(x)\}}-X_{s-}(x)\right)&\bar{\mathcal{M}}(\ddr s,\ddr v, \ddr z)\\ \nonumber 
&+\mu(\mathbb{N})\int_{0}^{t}(f(X_{s}(x))-X_{s}(x))\ddr s,\end{align}
where $\mathcal{M}$ is a Poisson point process on $\mathbb{R}_+\times [0,1]\times [0,1]$ with intensity $m(\ddr t,\ddr v, \ddr z)=\ddr t\otimes \ddr v\otimes z^{-2}\Lambda(\ddr z)$ and $\bar{\mathcal{M}}$ stands for the compensated measure $\bar{\mathcal{M}}=\mathcal{M}-m$.
Notice that $f(0)=f(1)=0$ and for all $x\in [0,1]$, $f(x)-x\leq 0$ so that the drift term is negative.   In the general case, $\Lambda$-WF processes may have a diffusion part. We focus in this work on the case of a measure $\Lambda$ on $[0,1]$ with no mass at $0$.  

A solution $(X_t(x),t\geq 0)$ to \eqref{SDEselecintro}  is valued in $[0,1]$ and its dynamics can be understood as follows. Imagine a population of constant size $1$, whose individuals carry at any time one allele among a set of two alleles $\{a,A\}$. 
The process $(X_t(x),t\geq 0)$ follows the frequency of allele $a$ when initially the proportion of individuals carrying allele $a$ is of size $x$. The time-dynamics of $(X_t(x),t\geq 0)$ consists of two parts: 
\begin{itemize}
\item the resampling which is governed by the Poisson random measure $\mathcal{M}$: for any  $(t,v,z)$ atom of $\mathcal{M}$,\\
\begin{itemize}
\item 
if $v\leq X_{t-}(x)$, then allele $a$ is sampled and  a fraction $z\in (0,1)$ of the alleles $A$ at time $t-$ is replaced by the allele $a$ at time $t$. The frequency of allele $a$ increases:
\[X_t(x)=z\big(1-X_{t-}(x)\big)+X_{t-}(x),\]

\item if $v>X_{t-}(x)$, then allele $A$ is sampled and a fraction $z\in (0,1)$ of the alleles $a$ at time $t-$ is replaced by the allele $A$ at time $t$. The frequency of allele $a$ decreases:
\[X_t(x)=(1-z)X_{t-}(x),\]
\end{itemize}
\item the selection which is modeled by function $f$  characterizes the disadvantage of allele $a$:  the frequency of allele $a$ decreases continuously in time along the negative deterministic drift: \[\mu(\mathbb{N})\big(f(X_{t}(x))-X_t(x)\big)\ddr t.\]
\end{itemize}

When $f$ vanishes, the drift term in \eqref{SDEselecintro} governing the selection disappears and the solution of \eqref{SDEselecintro} becomes the classical $\Lambda$-Wright-Fisher process, see Bertoin and Le Gall \cite{LGB2} and Dawson and Li \cite{DawsonLi}. In particular, when there is no selection term, the SDE \eqref{SDEselecintro} has a pathwise unique strong solution and the boundaries $0$ and $1$ are both absorbing whenever they are reached. The event of absorption at $1$ is called \textit{fixation} of the allele $a$. It corresponds to the fact that all individuals have the common type $a$ in finite time almost surely. 
Bertoin and Le Gall \cite{LGB2}  have established that in the setting with no selection, if $\Lambda(\{1\})=0$ then the event of \textit{fixation} at one of the boundaries
occurs if and only if the measure $\Lambda$ satisfies the following condition
\begin{equation}\label{cdipsi}
\sum_{n\geq 2}\frac{1}{\Phi(n)}<\infty,
\end{equation}
where for any $n\geq 2$,
\begin{equation}\label{phi2}
\Phi(n)=\int_{(0,1)}\left((1-x)^{n}+nx-1\right)x^{-2}\Lambda(\ddr x).
\end{equation}
Condition \eqref{cdipsi} is perhaps better known in the coalescent theory, as the necessary and sufficient condition for the $\Lambda$-coalescent to come down from infinity. We will return to this fact later. We assume from now on that $\Lambda(\{1\})=0$, so that no single resampling event can give to all individuals the same allele. 

One of the first models generalizing the $\Lambda$-Wright-Fisher process by incorporating selection is  perhaps the logistic case for which $f(x)=x^2$ and the drift term in the SDE \eqref{SDEselecintro} takes the form $-\alpha x(1-x)$ with $\alpha=\mu(\mathbb{N})$.  In this setting  the measure $\mu$ reduces to a Dirac mass at $2$. Such processes have been studied 
by Baake et al. \cite{baake2016}, Bah and Pardoux \cite{zbMATH06403252}, Etheridge and Griffiths \cite{zbMATH05832932}, Griffiths \cite{zbMATH06387922} and Foucart \cite{zbMATH06346921}. 
The SDE \eqref{SDEselecintro}  has been considered in the study of discrete branching processes with interactions by  Gonz\'alez et al. \cite{Gonzalesetal} with function $f$ satisfying $f'(1-)<\infty$ (i.e. $\mu$ has a first moment).
%


The behavior of the positive function $x\mapsto x-f(x)$ near $1$ reflects the strength of the selective advantage of allele $A$ over $a$.   The question addressed in the present article is to see whether a  selection term  can overcome the $\Lambda$-resampling mechanism and prevent  fixation of the deleterious allele $a$. We will find indeed new phenomena occurring in the presence of certain strong selection. In particular, despite the strength of the resampling rule under the condition \eqref{cdipsi}, we shall find regimes for which even though the population starts entirely with the deleterious allele, its frequency will vanish in finite time almost surely.  Our main results concern specifically the case for which 

\begin{equation}\label{regularlambda}
\Lambda(\ddr z)=h(z)\ddr z,\text{ for } x\in [0,x_0] \text{ with } h(z)z^{\beta}\underset{x\rightarrow 0+}{\longrightarrow}\rho, 
\end{equation}
\text{ and } 
\begin{equation}\label{regularmu}
\mu(\mathbb{N})\big(x-f(x)\big)\underset{x\rightarrow 1-}{\sim} \sigma (1-x)^{\alpha},
\end{equation}
for some $x_0\in (0,1]$, $\alpha,\beta\in (0,1)$ and $\sigma,\rho>0$. In this setting, when $\beta=1-\alpha$, we shall find regimes in which all individuals carry the deleterious allele at a set of times of negligible Lebesgue measure, before the selection starts to act effectively and that the deleterious allele vanishes, see Theorem \ref{fixationat0} and Theorem \ref{regularcase}. 

In a more theoretic wording, when the measure $\mu$ is heavy-tailed, the drift term in \eqref{SDEselecintro} becomes non-Lipschitz at $1$, namely $f'(1)=\infty$, and pathwise uniqueness of the solution to the SDE \eqref{SDEselecintro} might not hold. 
We shall be interested in the different ways of extension of the minimal process solution to \eqref{SDEselecintro} after it has reached boundary $1$. Following Feller's terminology for diffusions, see e.g. Karlin and Taylor's book \cite[Chapter 15, Section 6]{MR611513},
a boundary point is said to be natural if the process can not reach the boundary and can not leave it. The boundary is an exit if the process  can reach the boundary but can not leave it. Symmetrically, it is said to be an entrance if the process can not access the boundary but leaves it; and finally the boundary is regular if the process enters into it and is able to get out from it.

Our method relies on the study of an extension of the minimal process, solution to \eqref{SDEselecintro}, after it has reached boundary $1$ constructed in the following way. We first look at processes, solution to the Equation \eqref{SDEselecintro} with an additional drift term  $-\lambda X_t\ddr t$  with $\lambda>0$. This drift can be seen as modeling \textit{mutation} from the deleterious allele $a$ to the advantaged one $A$. We shall see that under the assumption that there is no Kingman component, i.e. $\Lambda(\{0\})=0$, those processes, call them $X^{\lambda}$'s, can all be started from boundary $1$. Our core object of study is the limit process that arises when the parameter $\lambda$  tends to $0$ (i.e. the mutation becomes very low).  Hence define formally the limit process $X^{\mathrm{r}}$ as $X^{\mathrm{r}}_t:=\underset{\lambda \rightarrow 0+}{\lim} X^{\lambda}_t$ for all $t\geq 0$. The convergence will be made precise later and we shall see that the process $X^{\mathrm{r}}$ is extending the minimal solution to \eqref{SDEselecintro}. In order to classify its boundaries, we will investigate some duality relationships that it necessarily satisfies with respect to certain continuous-time Markov chains arising as the functional of the block counting process $N$ of a \textit{simple} exchangeable fragmentation-coalescence (EFC) process. These duality relations can be seen as generalization of the well-known moment duality between a $\Lambda$-WF process and the block counting process of a $\Lambda$-coalescent. Basic facts about moment-duality in the simpler case without selection are recalled in Section \ref{backgroundWF}. 

EFC processes have been introduced by Berestycki in \cite{MR2110018}. They are partition-valued processes whose laws are invariant under the action of finite permutation and whose blocks (i.e. equivalence classes) can coalesce as in an exchangeable coalescent and fragmentate as in an homogeneous fragmentation. Simple EFC processes form a subclass of EFC processes in which coalescence occurs as in a $\Lambda$-coalescent and fragmentation occurs at finite rate.  
More backgrounds on EFC processes are provided in Section \ref{backgroundEFC}. Simple EFCs and their block counting processes have been studied in Foucart \cite{cdiEFC} and Foucart and Zhou \cite{explosion}. The boundary $\infty$ has been investigated in those articles and sufficient conditions for the boundary $\infty$ to be entrance, exit or regular have been identified. We shall transfer the results on EFC processes to results for $\Lambda$-WF processes with selection and vice versa, see Section \ref{application}.

In the sequel, we say that a boundary is \textit{absorbing} if when started from the boundary, the process stays at the boundary at any future time.  
So that an exit boundary is absorbing and a regular boundary is absorbing if it is subject to the prescription that the process fixes at the boundary once it is attained. In other words, the process with a {\it regular absorbing} boundary is stopped at the boundary.
When saying that a boundary is \textit{non-absorbing}, we mean that the process, when  started from the boundary, leaves it at some future time with positive probability. An entrance boundary is thus non-absorbing as well as a regular boundary when the process is not stopped at it.  


In a more formal setting, 
for any $x\in [0,1]$, let $\tau_1$ be the first hitting time of boundary $1$, i.e. $\tau_1:=\inf\{t>0: X_t^{\mathrm{r}}(x)=1\}\in [0,\infty]$. The $\Lambda$-WF process with selection has boundary $1$ accessible if for any $x\in (0,1]$, $\mathbb{P}_x(\tau_1<\infty)>0$. Furthermore, the boundary  $1$ is
\begin{itemize}
\item[-] exit: if $1$ is accessible and when $\tau_1<\infty$, for all $t\geq \tau_1$, $X^{\mathrm{r}}_t(1)=1$ a.s;
\item[-] entrance: if $1$ is not accessible and for all $t>0$, $X^{\mathrm{r}}_t(1)<1$ a.s;
\item[-] regular non-absorbing: $1$ is accessible and
$\mathbb{P}(X^{\mathrm{r}}_{t}(1)<1\text{ for some }t>0)>0$; 
\item[-] natural: if $1$ is not accessible and for all $t\geq 0$, $X_t^{\mathrm{r}}(1)=1$ almost surely.
\end{itemize}
As explained before, a boundary is said to be regular absorbing if the boundary is regular but the process is stopped after reaching it. In the case of the $\Lambda$-WF process with selection, the process with boundary $1$ absorbing is $(X_{t\wedge \tau_1}^{\mathrm{r}},t\geq 0)$. The latter will coincide with the process $(X_t^{\mathrm{min}},t\geq 0)$ solution to \eqref{SDEselecintro} that is absorbed  after it has hitted the boundary $1$. In order to classify the different possibilities, we will establish not one, but two relationships of duality, see Theorems \ref{thmmomentdual1} and \ref{thmmomentduality2}. The first duality relation is for the process $(X_t^{\mathrm{min}},t\geq 0)$ and the second is for the process $(X_t^{\mathrm{r}},t\geq 0)$ that might get out from the boundary $1$, but which we need to construct rigorously.

The two duality relationships  have for consequence the following one-to-one correspondences between boundaries conditions. Denote by $N$ the block counting process of the simple EFC process with coalescence measure $\Lambda$ and splitting measure $\mu$, see Section \ref{backgroundEFC}.
\begin{table}[htpb]
\begin{center}
\begin{tabular}{|c|c|}
\hline
Boundary condition on $X$ & Boundary condition on $N$\\
\hline
 $1$ exit & $\infty$  entrance \\
\hline
 $1$ entrance & $\infty$  exit \\ 
\hline
 $1$ regular non-absorbing & $\infty$  regular absorbing\\ 
\hline
 $1$ regular absorbing & $\infty$  regular non-absorbing \\
\hline
$1$ natural & $\infty$  natural\\
\hline
\end{tabular}
\vspace*{2mm}
\caption{Boundaries of $X$ and boundaries of $N$.}
\label{correspondance}
\end{center}
\end{table}

The correspondences  stated in Table \ref{correspondance} between entrance and exit boundaries for processes satisfying a duality relationship have been observed in other contexts, see the seminal work of Cox and R\"osler \cite{zbMATH03828986}.  Reminiscent classifications between boundaries have been established in Foucart \cite{zbMATH07055671} for logistic continuous-state branching processes, see also Hermann and Pfaffelhuber \cite{Hermann} and Berzunza Ojeda and Pardo \cite{Berzunza}.  

Since there are several possible ways to leave a regular boundary, see e.g. \cite[Chapter 15, Section 8]{MR611513} in the case of diffusions processes, Table \ref{correspondance} does not precise completely the behavior of the process at the boundary when it is regular non-absorbing. 

Recall that a regular boundary is said to be \textit{reflecting} when the set of times at which the process lies at the boundary, has a zero Lebesgue measure.  A regular boundary is also said to be \textit{regular for itself} if the process started from the boundary returns immediately to it almost surely. 
In the same fashion, we classify the exit and entrance boundaries by saying that boundary $1$ is an instantaneous entrance if it is an entrance and the first entrance time in $[0,1)$, $\tau^{1}:=\inf\{t>0: X^{\mathrm{r}}_t(x)<1\}$ satisfies  for any $t>0$, $\mathbb{P}_x(\tau^{1}\leq t)\longrightarrow 1$, as $x$ goes to $1$. The boundary $\infty$ is an instantaneous exit if it is an exit and the first explosion time $\zeta_\infty:=\inf\{t>0: N^{(n)}_t=\infty\}$ satisfies for any $t>0$, $\mathbb{P}_n(\zeta_\infty \leq t)\longrightarrow 1$, as $n$ goes to $\infty$. Similar definitions hold for instantaneous exit boundary $1$ and instantaneous entrance boundary $\infty$.

We shall see in Theorem \ref{regularforitself} and Proposition \ref{instantanprop} that the following one-to-one correspondences hold :
\begin{table}[htpb]
\begin{center}
\begin{tabular}{|c|c|}
\hline
Boundary  of $X$ & Boundary of $N$   \\
\hline
$1$ regular reflecting & $\infty$  regular for itself\\
\hline
$1$ regular for itself & $\infty$  regular reflecting \\ 
\hline
$1$ instantaneous entrance & $\infty$  instantaneous exit \\ 
\hline
$1$ instantaneous exit & $\infty$  instantaneous entrance \\ 
\hline
\end{tabular}
\vspace*{2mm}
\caption{regular for itself/regular reflecting}
\label{correspondancereg}
\end{center}
\end{table}

We address now the long-term behavior of the extended $\Lambda$-Wright-Fisher process with selection $(X^{r}_t,t\geq 0)$. Recall that by fixation, we mean that all individuals get one of the two alleles and keep it forever. When $1$ is an exit, fixation of the deleterious allele has a positive probability to occur. When the boundary $1$ is regular non-absorbing or entrance, fixation at $1$ can not occur, and we shall actually see that there is almost sure fixation of the advantageous allele, see Theorem \ref{fixationat0}. Namely, if  the process  $(X^{\mathrm{r}}_t,t\geq 0)$ has boundary $1$ entrance or regular non-absorbing, then almost surely
\begin{center} $\exists \ t_0>0; X^{\mathrm{r}}_t(x)=0, \ \forall t\geq t_0$.\end{center} 

Table \ref{correspondance} and Table \ref{correspondancereg} provide a theoretical classification of the boundaries. When there is selection, no necessary and sufficient conditions entailing that boundary $1$ of $X$ or boundary $\infty$ of $N$ is of a given type are known.  In Section \ref{application}, we design explicit sufficient conditions on the resampling measure $\Lambda$ and the selection function $f$ for each possible boundary condition.
They are obtained  via the correspondences stated in Table \ref{correspondance} and Table \ref{correspondancereg}, by transfering previous results on the boundary $\infty$ of the block counting process $N$, obtained in \cite{explosion}, to results on the boundary $1$ of $X^{\mathrm{r}}$. Sufficient conditions on the function $f$ entailing that the boundary $1$ is an instantaneous entrance or an exit are given in Theorem \ref{theorementrancef} and Theorem \ref{suffcondexitf} respectively. When the conditions \eqref{regularlambda} and \eqref{regularmu} are satisfied, the nature of the boundary depends on the parameters $\alpha,\beta,\sigma$ and $\rho$. In particular, we show that when $\beta=1-\alpha$ and $\frac{1}{(1-\alpha)(2-\alpha)}<\sigma/\rho<\frac{\pi}{(2-\alpha)\sin(\pi \alpha)}$, the process $(X_t^{\mathrm{r}},t\geq 0)$ has boundary $1$ regular reflecting and regular for itself, see Theorem \ref{regularcase} and Theorem \ref{regularforitselfstable}.

The paper is organized as follows. Backgrounds on $\Lambda$-Wright-Fisher processes with and without selection are provided in Section \ref{backgroundWF}. We verify in particular that the SDE \eqref{SDEselecintro} admits a unique solution up to the first hitting time of the boundaries. The moment-duality between the $\Lambda$-Wright-Fisher process without selection and the process counting the number of blocks in a $\Lambda$-coalescent is recalled. Consequences of this duality relationships for the boundaries of the $\Lambda$-Wright-Fisher process without selection are reviewed. In Section \ref{backgroundEFC}, we briefly recall the notion of exchangeable fragmentation-coalescence processes and describe the process of its number of blocks. Results  from \cite{cdiEFC} are gathered. In Section \ref{sec}, two natural extensions of the minimal $\Lambda$-Wright-Fisher process are studied. We then provide a theoretical study of their possible behaviors at the boundary $1$. In Section \ref{application}, we apply the results obtained in  \cite{explosion} to the $\Lambda$-Wright-Fisher processes with selection and provide explicit sufficient conditions for each boundary condition.
\\

\textbf{Notation.} We denote by $C([0,1])$ the space of continuous functions on $[0,1]$, $C^{2}((0,1))$ and $C^2_c([0,1])$ are respectively the space of twice continuously differentiable functions on $(0,1)$ and the space of twice continuously differentiable functions whose derivatives have compact support included in $(0,1)$. The integrability of a function $g$ in a left-neighbourhood of $a\in (0,\infty]$ is denoted by $\int^{a-}g(x)\ddr x<\infty$. We set $\bar{\mathbb{N}}:=\{1,2,\ldots, \infty\}$, the one-point compactification of $\mathbb{N}$, the latter is compact for the metric $d(n,m):=|n-m|$ for $n,m\in \mathbb{N}$ and $d(\infty,n):=1/n$ for any $n\in \bar{\mathbb{N}}$, where by convention $1/\infty =0$. The space of continuous functions on $\bar{\mathbb{N}}$ is denoted by $C(\bar{\mathbb{N}})$. A function $f$ belongs to $C(\bar{\mathbb{N}})$ if and only if $f(n)\underset{n\rightarrow \infty}{\longrightarrow} f(\infty)$. We denote by $\mathbb{P}_z$ the law of the process under consideration started from $z$, the corresponding expectation is $\mathbb{E}_z$.

\section{Background on $\Lambda$-WFs with selection and simple EFCs}\label{background}
\subsection{$\Lambda$-Wright-Fisher processes with selection and $\Lambda$-coalescent}\label{backgroundWF}
\subsubsection{A stochastic differential equation with jumps}
We introduce the class of $\Lambda$-Wright-Fisher processes with frequency-dependent selection. As we shall use it later, we slightly generalize SDE \eqref{SDEselecintro} by allowing the generating function  driving selection to be defective, namely such that $f(1)<1$.
Let $\mu$ be a finite measure over $\bar{\mathbb{N}}:=\{1,2,\ldots,\infty\}$ and $\Lambda$ be a finite measure over $[0,1]$ with $\Lambda(\{0\})=\Lambda(\{1\})=0$. Let $f$ be the generating function of the probability law $\mu(\cdot)/\mu(\bar{\mathbb{N}})$ over $\bar{\mathbb{N}}$: for all $x\in [0,1]$, $f(x)=\sum_{k\in \mathbb{N}}x^{k}\mu(k)/\mu(\bar{\mathbb{N}})$. When the function $f$ is defective, $1-f(1)>0$ and this term corresponds to the mass at infinity for the probability distribution associated to $f$.  Consider the stochastic equation
\begin{align}\label{SDEselec}X_t(x)=x+\int_{0}^{t}\int_{0}^{1}\int_{0}^{1}z\left(\mathbbm{1}_{\{v\leq X_{s-}(x)\}}-X_{s-}(x)\right)&\bar{\mathcal{M}}(\ddr s,\ddr v, \ddr z)\\ \nonumber 
&+\mu(\bar{\mathbb{N}})\int_{0}^{t}(f(X_{s}(x))-X_{s}(x))\ddr s.\end{align}
When there is no selection, i.e. $f\equiv 0$ and the drift term in Equation  \eqref{SDEselec} vanishes,  the process $(X_t(x),t\geq 0)$ valued in $[0,1]$, is a martingale (this property in terms of the population model can be thought as the neutrality assumption between the two alleles) and has both boundaries $0$ and $1$ absorbing. Existence and weak uniqueness of the solution to the SDE  \eqref{SDEselec}, when $f\equiv 0$, has been established by  Bertoin and Le Gall \cite{LGB2} through a martingale problem.  Set $q(v,x):=\mathrm{1}_{\{ v\leq x\}}-x$ for any $x\in [0,1]$, the generator of the $\Lambda$-Wright-Fisher process without selection is the operator $\mathcal{A}$ defined as follows
\begin{align}\label{generator}
\mathcal{A}g(x)&:=\int_{[0,1]\times [0,1]}\left(g(x+zq(v,x))-g(x)-zq(v,x)g'(x)\right)z^{-2}\Lambda(\ddr z).
\end{align}
Dawson and Li \cite{DawsonLi} have studied the SDE \eqref{SDEselec} through  techniques different from Bertoin and Le Gall. Among other results, it is established that under some assumptions on the drift term, \eqref{SDEselec} admits a flow of strong solutions $(X_t(x),t\geq 0,x\geq 0)$ for which $x\mapsto X_t(x)$ is c\`adl\`ag. 

Consider now the setting with selection. Any process solution to \eqref{SDEselec} has for generator $\mathcal{A}^{\mathrm{s}}$ acting on $C^{2}((0,1))$, by \begin{equation}\label{generatorWFs}\mathcal{A}^{\mathrm{s}}g(x):=\mathcal{A}g(x)+\mu(\bar{\mathbb{N}})(f(x)-x)g'(x) \text{ for any }x\in (0,1).
\end{equation}

\begin{lemma}\label{minimalprocess} Let $f$ be any generating function (possibly defective). There exists a unique strong solution to
 \eqref{SDEselec} up to the first hitting time of the boundaries. 
\end{lemma}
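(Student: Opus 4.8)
The plan is to establish existence and pathwise uniqueness for the stopped equation \eqref{SDEselec} by a localization argument that reduces the problem to a Lipschitz regime where standard SDE theory applies. The key observation is that, although $f$ may fail to be Lipschitz at the boundary $1$ (since $f'(1)$ can be infinite when $\mu$ is heavy-tailed), on any closed subinterval $[0,1-\delta]$ with $\delta\in(0,1)$ the map $x\mapsto \mu(\bar{\mathbb{N}})(f(x)-x)$ is $C^{1}$ with bounded derivative, hence Lipschitz; likewise, the jump coefficient $(v,z,x)\mapsto z(\mathbbm 1_{\{v\le x\}}-x)$ is well behaved away from the issue. First I would fix $\delta>0$ and modify the drift outside $[0,1-\delta]$ to a globally Lipschitz function $b_\delta$ agreeing with $x\mapsto\mu(\bar{\mathbb N})(f(x)-x)$ on $[0,1-\delta]$, obtaining a modified SDE whose coefficients are globally Lipschitz. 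For this modified equation, existence and pathwise uniqueness of a strong solution $X^{\delta}$ follow from the Dawson--Li theory quoted above (or from a direct Picard/Gronwall argument for SDEs driven by Poisson random measures with Lipschitz coefficients).

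Next I would define the exit time $\sigma_\delta:=\inf\{t\ge 0: X^{\delta}_t\ge 1-\delta\}$ and check consistency: for $\delta'<\delta$, the solutions $X^{\delta}$ and $X^{\delta'}$ coincide up to $\sigma_\delta$, because on $[0,\sigma_\delta)$ both satisfy the same (unmodified) equation and pathwise uniqueness of the $\delta'$-modified equation forces agreement on that stochastic interval. This gives a consistent family, so one can patch them together: set $\tau_1 = \lim_{\delta\downarrow 0}\sigma_\delta = \inf\{t>0: X_{t-}=1 \text{ or } X_t=1\}$, the first hitting time of the boundaries (one should also incorporate the first hitting time of $0$, which is handled identically near $0$, where the coefficients are already Lipschitz), and define $X_t$ on $[0,\tau_1)$ by $X_t:=X^{\delta}_t$ for any $\delta$ with $t<\sigma_\delta$. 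The resulting process solves \eqref{SDEselec} up to $\tau_1$ by construction, since on each $[0,\sigma_\delta)$ it agrees with a genuine solution of the unmodified equation.

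For uniqueness, I would argue that any two strong solutions $X,\tilde X$ of \eqref{SDEselec} defined up to the first boundary hitting time must agree: up to $\sigma_\delta$ both solve the $\delta$-modified equation (the modification being irrelevant as long as the path stays in $[0,1-\delta]$), hence coincide on $[0,\sigma_\delta)$ by pathwise uniqueness there; letting $\delta\downarrow 0$ gives agreement on the whole interval of definition. A minor technical point to verify is that $\tau_1$ as defined really is the first hitting time of $\{0,1\}$ and that the process does not explode in finite time before reaching a boundary; this is immediate here because $X$ stays in $[0,1]$, a bounded set, so $\sigma_\delta\uparrow\tau_1$ with no loss of mass.

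The main obstacle is the non-Lipschitz behavior of the drift at $1$: one cannot run a single global fixed-point argument. The localization above circumvents this cleanly, and the only place requiring a little care is the consistency/patching step — one must phrase pathwise uniqueness for the localized equations in terms of stochastic intervals rather than deterministic times, and verify that the family $\{\sigma_\delta\}$ is increasing and exhausts $[0,\tau_1)$. Everything else is routine. I would also remark that weak uniqueness of the unstopped martingale problem for $\mathcal A^{\mathrm s}$ on $C^2_c((0,1))$ is not needed here; the statement is only about the minimal (stopped) process, which is exactly what the localization produces.
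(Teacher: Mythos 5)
Your proposal is correct and follows essentially the same route as the paper: truncate the drift to a globally Lipschitz function agreeing with $\mu(\bar{\mathbb N})(f(x)-x)$ on $[0,1-\delta]$, invoke pathwise existence and uniqueness for the Lipschitz jump SDE (the paper uses Li--Pu's Theorem 5.1 where you cite Dawson--Li, an immaterial difference), and patch the localized solutions along the increasing exit times to obtain the unique minimal solution up to the first boundary hitting time.
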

\begin{proof} 
For any $n\geq 1$, one can find a Lipschitz function $b^{n}$ on $[0,1]$ such that $b^{n}(x):=\mu(\bar{\mathbb{N}})(f(x)-x)$ if $0\leq x\leq 1-1/n$ and $b^{n}(x):=0$ if $x\geq 1-1/2n$. 
Consider the stochastic equation
\begin{equation}\label{sdepre}X_t(x)=x+\int_{0}^{t}\int_{0}^{1}\int_{0}^{1}z\left(\mathbbm{1}_{\{v\leq X_{s-}(x)\}}-X_{s-}(x)\right)\bar{\mathcal{M}}(\ddr s,\ddr v, \ddr z)+\int_0^{t}b^{n}(X_s)\ddr s.
\end{equation}
We verify now that the latter equation has a pathwise unique strong solution by applying \cite[Theorem 5.1]{li2012}. We follow the notation of \cite{li2012} and check Conditions (3.a), (3.b) and (5.a). Set $U_0=[0,1]\times [0,1]$, for any $u\in U_0$, denote the coordinates of $u$ by  $u=(v,z)$ and let $\mu_0(\ddr u)=\mu_0(\ddr v,\ddr z):=\ddr v \otimes z^{-2}\Lambda(\ddr z)$. Define $g_0(x,u)=z(\mathbbm{1}_{\{v\leq x\}}-x)$ and note that the stochastic equation above corresponds to the stochastic equation (2.1) in \cite{li2012} with $\sigma\equiv 0, g_1\equiv 0$ and $b(x):=b^{n}(x)\leq 0$. Since $b^{n}$ is Lipschitz on $[0,1]$, condition (3.a) is satisfied (with $r_m(z)=r_1(z):=L_n z$ for any $z$ and any $m$, and where $L_n$ is the Lipschitz constant of $b_n$). Condition (3.b) is verified in \cite[Corollary 6.2]{li2012}. It remains only to check Condition (5.a). For any $x\in \mathbb{R}$, $g_0(x,(u,z))^2\leq z^2$ and $b^{n}(x)^2\leq 1$, hence \[\int_0^1 g_0(x,(u,z))^2 \mu_0(\ddr u)+b^{n}(x)^2\leq \int_0^1 z^{2}z^{-2}\Lambda(\ddr z)+1= \Lambda((0,1))+1\leq K(1+x^2).\]
Denote by $(X_t^{n},t\geq 0)$ the solution of \eqref{sdepre}. Let $\tau_{1-1/n}:=\inf\{t>0: X_t^{n}> 1-1/n\}$. By pathwise uniqueness if $m<n$ then $X_t^{m}=X_t^{n}$ for $t\leq \tau_{1-1/m}\wedge \tau_0$. Note that $\tau_{1-1/m}\underset{m\rightarrow \infty}{\longrightarrow} \tau_1$; thus we can define a process $(X_t,t<\tau)$ such that $X_t=X_t^{n}$ for all $t\leq \tau_{1-1/n}$ and all $n\geq 1$. The process is solution to \eqref{SDEselec} and uniqueness of this minimal solution plainly holds.\qed 
\end{proof}
\begin{remark}\label{Lipschitzrem}
When the selection term satisfies the condition $f'(1)<\infty$ (i.e. $\mu$ has a finite mean), the function $x\mapsto f(x)-x$ is Lipschitz over $[0,1]$, and \cite[Theorem 2.1]{DawsonLi} ensures that there is a unique pathwise strong solution to \eqref{SDEselec}. 
It is worth noticing that in this case, since the constant process $1$ is solution to \eqref{SDEselec}, pathwise uniqueness entails in particular that if the boundary $1$ is reached then the process is absorbed at $1$. 
\end{remark}

We will be interested in cases where $f$ is not globally Lipschitz on $[0,1]$, i.e. $f'(1)=\infty$ and in possible extensions of the minimal process after time $\tau$. 
We call minimal $\Lambda$-Wright-Fisher process with selection governed by $f$ and denote by $(X_t^{\mathrm{min}},t\geq 0)$ the process solution to \eqref{SDEselec} that is stopped at the boundary after time $\tau:=\inf\{t>0: X_t \notin (0,1)\}$.
A consequence of Lemma \ref{minimalprocess} is that the minimal process, or minimal solution, is the unique solution to the following (stopped) martingale problem : 
\[(\mathrm{MP}): \forall g\in C^2_c([0,1]), \quad \left(g(X^{\mathrm{min}}_{t\wedge \tau})-\int_{0}^{t}\mathcal{A}^{\mathrm{s}}g(X^{\mathrm{min}}_{s\wedge \tau})\ddr s, t\geq 0\right) \text{ is a martingale,}\]
where $X^{\min}_{t}=1$ if $t\geq \tau_1=\tau_1\wedge \tau_0$ and $X^{\min}_{t}=0$ if $t\geq \tau_0=\tau_1\wedge \tau_0$.
By extension of the minimal process solution of \eqref{SDEselec}, we mean here a process $(X_t,t\geq 0)$ with infinite life-time such that the process $(X_{t},t<\tau)$ has  same law as $(X^{\mathrm{min}}_{t},t<\tau)$. 

An important tool used repeatedly in the proofs is a comparison theorem due to Dawson and Li for solutions of \eqref{SDEselec}. Consider two Lipschitz generating functions $f_1$ and $f_2$ such that $f_1\leq f_2$ (thus by assumption $f'_i(1-)<\infty$ for $i=1,2$) and two initial values in $[0,1]$, $x_1\leq x_2$, then by \cite[Theorem 2.2]{DawsonLi} (here $\sigma=g'_1=g''_1=0$ and the conditions (2-a)-(2-d) are satisfied) if $(X^i_t(x_i),t\geq 0)$ is the solution of \eqref{SDEselec} with $f=f_i$ for $i=1,2$ and initial value $x_i$ then almost surely for all $t\geq 0$, $X^{1}_t(x_1)\leq X^{2}_t(x_2)$.
 
\subsubsection{$\Lambda$-Wright-Fisher processes without selection}
A fundamental property of $\Lambda$-Wright-Fisher processes without selection is their link with the processes called $\Lambda$-coalescents. Those processes are valued in the space of partitions of $\mathbb{N}$ and are evolving by multiple (not simultaneous) mergings of equivalence classes (called blocks). The $\Lambda$-coalescents can be thought as representing the genealogy backwards in time of the ancestral lineages in the population evolving by resampling, see \cite{MR1990057}. More backgrounds about $\Lambda$-coalescents are provided in Section \ref{backgroundEFC}. Let $(\Pi(t),t\geq 0)$ be a $\Lambda$-coalescent and $(\#\Pi(t),t\geq 0)$ be its block counting process. This is a Feller process valued in $\bar{\mathbb{N}}$
whose generator is $\mathcal{L}^{c}$ with for any $g:\mathbb{N}\mapsto \mathbb{R}$
\begin{equation}\label{coalpart}\mathcal{L}^c g(n):=\sum_{k=2}^{n}\binom{n}{k}\lambda_{n,k}(g(n-k+1)-g(n)), 
\end{equation}
\text{where } \begin{equation}\label{lambdank} \lambda_{n,k}:=\int_0^1 z^{k}(1-z)^{n-k}x^{-2}\Lambda(\ddr z) \text{ for } 2\leq k\leq n. \end{equation}
Recall Condition \eqref{cdipsi}. Schweinsberg \cite{CDI} has established that \eqref{cdipsi} is necessary and sufficient for $\infty$ to be an entrance boundary of the block counting process, in this case although $\#\Pi(0)=\infty$, for any $t>0$ $\#\Pi(t)<\infty$  almost surely.  

The following identity, established for instance in \cite[Theorem 1, Equation (8)]{LGB2}, links the block counting process of a pure $\Lambda$-coalescent with the $\Lambda$-Wright-Fisher process without selection through a moment-duality relationship. For any $x\in [0,1]$ and $n\in \bar{\mathbb{N}}$, if $\#\Pi(0)=n$ then
\begin{equation}\label{dualclassic}\mathbb{E}[X_t(x)^n]=\mathbb{E}_n[x^{\#\Pi(t)}],
\end{equation}
where we denote by $\mathbb{E}_n$ the expectation conditionally given that $\#\Pi(0)=n$.

The identity \eqref{dualclassic} has many important consequences. Firstly, since $X_t(x)$ is a bounded random variable, its law is entirely characterized by its moments and therefore the one-dimensional laws of $(\#\Pi(t),t\geq 0)$ are in one-to-one correspondence with those of the process $(X_t(x),t\geq 0)$. From a theoretical point of view one can see \eqref{dualclassic} as a representation of the semigroup of the process $(X_t(x),t\geq 0)$. Moreover, for any $n\in \mathbb{N}$, $\#\Pi(t)\leq \#\Pi(0)=n$ for all $t\geq 0$, $\mathbb{P}_n$-a.s. and letting $x$ approach $1$ provides the identity \[\underset{x\rightarrow 1-}{\lim}\mathbb{E}[X_t(x)^n]=\underset{x\rightarrow 1-}{\lim}\mathbb{E}_n[x^{\#\Pi(t)}]=\mathbb{P}_n(\#\Pi(t)<\infty)=1.\]
Since $x\mapsto X_t(x)$ admits left-limits, we see from the above convergence and Lebesgue's theorem that $X_t(1-)=1$ almost surely for all $t\geq 0$, so that the boundary $1$ is not an entrance of the $\Lambda$-Wright-Fisher process, but  is absorbing whenever it is reached. Letting $n$ go to infinity in \eqref{dualclassic} 
entails the identity
\begin{equation}\label{classicid}\mathbb{P}_x(\tau_1\leq t)=\mathbb{E}_{\infty}[x^{\#\Pi(t)}],\end{equation}
where $\tau_1:=\inf\{t>0: X_t(x)=1\}$ and $(\#\Pi(t),t\geq 0)$ starts from $\infty$ under $\mathbb{P}_\infty$. 
%
When Schweinsberg's condition \eqref{cdipsi} holds, $\#\Pi(t)<\infty$ a.s. for any $t>0$ and \eqref{classicid}  ensures that $\tau_1<\infty$ with positive probability. The identity \eqref{classicid} provides a representation of the cumulative distribution function of $\tau_1$ and in a dual way a representation of the entrance law at $\infty$ of the process $(\#\Pi(t),t\geq 0)$. Since $1$ is absorbing, the event $\{\tau_1<\infty\}$ coincides with the event of fixation at $1$: $\{\exists \ t_1\geq 0; \forall t\geq t_1; X_t(x)=1\}$. 
\subsubsection{Pure selection process} When there is no resampling in the population,  the frequency of the deleterious allele  $(X_t(x),t\geq 0)$ solves the ODE
\begin{equation}\label{ode}X_{t}(x)=x+\mu(\bar{\mathbb{N}})\int_{0}^{t}\big(f(X_s(x))-X_s(x)\big)\ddr s.
\end{equation}
%
Equivalently, the map $(X_t(x),t\geq 0)$ satisfies for all $x\in (0,1)$ and $t\geq 0$, \begin{equation}\label{integralform} \int_{X_t(x)}^{x}\frac{\ddr u}{u-f(u)}=\mu(\bar{\mathbb{N}})t. 
\end{equation}
A study of \eqref{integralform} when $x$ tends to $1$, yields the following dichotomy. Either $\int^{1-}\frac{\ddr u}{f(u)-u}=\infty$, and in order for the integral in \eqref{integralform} to retain the value $\mu(\bar{\mathbb{N}})t$, we must have $X_t(1):=\underset{x\rightarrow 1-}{\lim}X_t(x)=1$, or else $\int^{1-}\frac{\ddr u}{f(u)-u}<\infty$ and we must have $X_t(1):=\underset{x\rightarrow 1-}{\lim}X_t(x)\in (0,1)$. In the latter case, the function $(X_t(1),t\geq 0)$ solves \eqref{ode} and starts from $1$. Hence, if $f(1)=1$ then \eqref{ode} has two distinct solutions started from $1$, $X_t:=1$ for all $t\geq 0$ and $X_t:=\underset{x\rightarrow 1-}{\lim} X_t(x)$ for all $t\geq 0$, if and only if 
\begin{equation}\label{dynkin}
\int^{1-} \frac{\ddr x}{x-f(x)}<\infty.
\end{equation}
The integral in \eqref{dynkin} converges for example when $x-f(x)\underset{x\rightarrow 1-}{\sim} \sigma (1-x)^{\alpha}$ with $\alpha\in (0,1)$ and $\sigma>0$. 
Note that if $f(1)<1$, i.e. $\mu(\infty)>0$, then \eqref{dynkin} clearly holds. 

Solutions to \eqref{ode} are well-known in the theory of branching processes.  Let $(N_t^{(n)},t\geq 0)$ be a continuous-time discrete-state branching process started from $n\in \mathbb{N}$ with offspring measure $\mu$. We refer for instance to Harris' book  \cite[Chapter V]{Harris}. Note that $\mu$ gives  no mass at $0$, namely there is no death in the process $(N_t^{(n)},t\geq 0)$ and its sample paths are almost surely non decreasing. The branching property of the process $(N_t^{(n)},t\geq 0)$ ensures that the boundary $\infty$ is absorbing whenever it is reached. One can identify the function $(X_t(x),t\geq 0)$ with the generating functions of $N^{(1)}_t$ at any time $t$, more precisely for any $x\in [0,1]$, any $n\in \mathbb{N}$ and any $t\geq 0$
\begin{equation} \label{dualbranching} X_t(x)^{n}=\mathbb{E}[x^{N_t^{(n)}}].\end{equation}
Letting $x$ go towards $1$ the identity above yields:
$\mathbb{P}_n(\zeta_\infty\leq t)=\underset{x\rightarrow 1-}{\lim} X_t(x) \text{ for any }t\geq 0,$
where $\zeta_\infty:=\inf\{t>0: N_{t-}^{(n)} \text{ or } N_t^{(n)}=\infty\}$. This is the standard method for studying the explosion of process $(N_t^{(n)},t\geq 0)$, see \cite[Chapter V, Theorem 9.1]{Harris}. In particular, the boundary $\infty$ is an exit for the branching process $(N_t^{(n)},t\geq 0)$ if and only Dynkin's condition \eqref{dynkin} holds.
%

When resampling and selection are  both taken into account, the process solution to \eqref{SDEselec} may have more involved boundary conditions than what has just been seen for processes without selection or without resampling. In a similar fashion as for the pure selection process and for the pure resampling one,  we will see that the moments of $(X_t(x),t\geq 0)$ can be represented via certain continuous-time Markov chains, with values in $\bar{\mathbb{N}}$, whose jumps are mixture of those of the block counting process of a $\Lambda$-coalescent and those of an increasing branching processes. These processes appear when counting the number of blocks in certain exchangeable partition-valued processes. 
%
\subsection{Backgrounds on exchangeable fragmentation coalescence (EFC) processes}\label{backgroundEFC}

\subsubsection{Definition and Poisson construction}
These processes, introduced by Berestycki in \cite{MR2110018}, are  Markov processes with state-space
$\mathcal{P}_\infty$, the space of partitions of $\mathbb{N}$. By convention, any partition $\pi\in \mathcal{P}_\infty$ is represented by the sequence (possibly finite) of its non-empty blocks $(\pi_i,i\geq 1)$ ordered by their least element. 
The number of non-empty blocks of a partition $\pi$ is denoted by $\#\pi$ and if $\#\pi<\infty$, then we set $\pi_{j}=\emptyset$ for any $j\geq \#\pi+1$. By definition an EFC process $(\Pi(t),t\geq 0)$  satisfies the following conditions
\begin{itemize}
\item[i)] for any time $t\geq 0$, the random partition $\Pi(t)$  is exchangeable, i.e. its law is invariant by the action of permutations with finite support,
\item[ii)] it evolves in time by merging of blocks  or fragmentation of an individual block into sub-blocks. 
\end{itemize}
Berestycki has established that any EFC process is characterized in law by two $\sigma$-finite exchangeable measures on $\mathcal{P}_\infty$, $\mu_{\mathrm{Coag}}$ the measure of coagulation and $\mu_{\mathrm{Frag}}$, that of fragmentation. We refer the reader to  \cite[Proposition 3]{MR2110018} for the form of those measures as mixtures of paint-box laws. We briefly sketch the Poissonian construction of EFC processes, see  \cite[Section 3.2]{MR2110018}. 

\noindent \textbf{Poisson construction}. Consider two independent Poisson Point Processes $\mathrm{PPP}_C$ and $\mathrm{PPP}_F$ respectively on $\mathbb{R}_+\times \mathcal{P} _\infty$ and $\mathbb{R}_+\times \mathcal{P} _\infty\times \mathbb{N}$ with intensity $\ddr t\otimes \mu_{\mathrm{Coag}}(\ddr \pi)$ and  $\ddr t\otimes \mu_{\mathrm{Frag}}(\ddr \pi)\otimes \#$ where $\#$ denotes the counting measure on $\mathbb{N}$. The process $(\Pi(t),t\geq 0)$ defined as follows is an EFC process: 
its initial value $\Pi(0)$ is an exchangeable random partition independent of $\mathrm{PPP}_F$ and $\mathrm{PPP}_C$ and
\begin{align*}
\Pi(t)&=\mathrm{Coag}\left(\Pi(t-),\pi^{c}\right) \text{ if } (t,\pi^{c}) \text{\,\, is an atom of } \mathrm{PPP}_C, \\
\Pi(t)&=\mathrm{Frag}\left(\Pi(t-),\pi^{f},j\right) \text{ if } (t,\pi^{f},j) \text{\,\, is an atom of } \mathrm{PPP}_F,
\end{align*}
where for any partitions $\pi$, $\pi^{c}$, $\pi^{f}$
\begin{align*}
\mathrm{Coag}(\pi,\pi^{c})&:=\{\cup_{j\in \pi^{c}_i}\pi_j, i\geq 1\} \text{ and } \mathrm{Frag}(\pi,\pi^{f},j):=\{\pi_j\cap\pi^{f}_i, i\geq 1;\ \pi_{\ell}, \ell\neq j\}^{\downarrow}
\end{align*}
where $\{\cdots\}^{\downarrow}$ means that we reorder the blocks by their least elements. 

We focus on the subclass of simple EFC processes in which as in a $\Lambda$-coalescent,  there is no simultaneous multiple mergings, fragmentations occur at \textit{finite rate} and fragmentate any blocks into sub-blocks of infinite size (no formation of singletons).  In this setting,
\begin{itemize}
\item the coagulation measure $\mu_{\mathrm{Coag}}$ charges partitions with only one non-singleton block. If one associates to the atom  $(t,\pi^{c})$ of $\mathrm{PPP}_C$, a sequence of random variables $(X_i,i\geq 1)$ such that $X_i=1$ if $\{i\}\notin \pi^{c}$ and $X_i=0$ if $\{i\}\in \pi^{c}$, then 
the random variables $(X_i,i\geq 1)$ are exchangeable and  mixture of i.i.d Bernoulli random variables whose parameter $\underset{n\rightarrow \infty}{\lim}\frac{1}{n}\sum_{i=1}^{n}X_i$ has for ``intensity" a measure of the form $\nu(\ddr z):=z^{-2}\Lambda(\ddr z)$ for some finite measure $\Lambda$ on $[0,1]$. We call $\Lambda$ the coalescence measure.
\item The fragmentation measure  $\mu_{\mathrm{Frag}}$ on $\mathcal{P}_\infty$ has finite mass, i.e. $\mu_{\mathrm{Frag}}(\mathcal{P}_\infty)<\infty$, and only charges partitions whose blocks are all of infinite size.  If one associates to each atom $(t,\pi^{f},j)$ of $\mathrm{PPP}_F$, the random variable $k:=\#\pi^{f}-1\in \bar{\mathbb{N}}$, one gets a Poisson point process on $\mathbb{R}_{+}\times \bar{\mathbb{N}}\times \mathbb{N}$ with intensity $dt\otimes \mu \otimes \#$, where $\mu$ is the image of $\mu_{\mathrm{Frag}}$ by the map $\pi\mapsto \#\pi-1$.   We call $\mu$ the splitting measure.
\end{itemize}
%
For the sake of simplicity, we focus on initial partitions whose blocks are all infinite. In this case there is no singleton at any time in the system and the blocks of $\Pi(t)$ stay infinite at any time $t$. The infinitesimal time dynamics of $(\#\Pi(t),t\geq 0)$ are
as follows. 
%

\textit{Coalescence.}  Upon the arrival of an atom $(t,\pi^{c})$ of $\mathrm{PPP}_C$,  given $\#\Pi(t-)=n$, all blocks whose index $j\in [n]$ satisfies $X_{j}=1$ are merged. Given the parameter $z$ of the $X_i$'s, the number of blocks that merge at time $t$ has a binomial law with parameters $(n,z)$. Therefore, for any $k\in [|2,n|]$ the jump
$\#\Pi(t)=\#\Pi(t-)-(k-1)$
has rate $\binom{n}{k}\lambda_{n,k}$ where we recall $\lambda_{n,k}:=\int_{[0,1]}z^{k}(1-z)^{n-k}z^{-2}\Lambda(\ddr z).$
\vspace*{1mm}

\textit{Fragmentation}. Upon the arrival of an atom $(t,\pi^{f},j)$ of $\mathrm{PPP}_F$, given $\#\Pi(t-)=n$, and $j\leq n$, then the
$j^{\mathrm{th}}$-block is fragmentated into $k+1$ blocks where  $k=\#\pi^{f}\in \bar{\mathbb{N}}$. Therefore, at time $t$,
$\#\Pi(t)=\#\Pi(t-)+k$.
Since there are $n$ blocks at time $t-$, the total rate at which such a jump occurs is  $n\mu(k)$ for any $k\in \bar{\mathbb{N}}$.

 Recall $\mathcal{L}^{c}$ defined in \eqref{coalpart}. 
\begin{lemma}[Proposition 2.11 in \cite{cdiEFC}]\label{genEFC} If $\Pi(0)$ has blocks of infinite size, then the process $(\#\Pi(t),t\geq 0)$ is a right-continuous process valued in $\bar{\mathbb{N}}$ and has the Markov property when lying on $\mathbb{N}$: i.e. setting $\zeta_{\infty}:=\inf\{t>0: \#\Pi(t-)=\infty \text{ or } \#\Pi(t)=\infty\}$, $(\#\Pi(t),t<\zeta_{\infty})$ is a continuous-time Markov chain whose generator  is $\mathcal{L}=\mathcal{L}^c+\mathcal{L}^f$ with $\mathcal{L}^{c}$ is given in \eqref{coalpart} and $\mathcal{L}^f$  is acting on any bounded function $g$ by
\begin{equation}\label{fragpart}\mathcal{L}^f g(n):=\sum_{k=1}^{\infty}n\mu(k)(g(n+k)-g(n))+n\mu(\infty)(g(\infty)-g(n)).
\end{equation}
\end{lemma}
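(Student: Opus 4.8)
The plan is to recover both assertions of Lemma~\ref{genEFC} directly from the Poissonian construction of the simple EFC process, using the fact that before the first explosion time $\zeta_\infty$ the partition $\Pi(t)$ has only finitely many blocks, so that the jumps relevant to the block count are generated by a \emph{locally finite} point process.

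\medskip
\noindent\textbf{Step 1: finiteness and right-continuity before $\zeta_\infty$.} First I would fix $n\in\mathbb{N}$ and condition on $\#\Pi(0)=n$ (more precisely, on $\Pi(0)$ being a partition with $n$ infinite blocks). On the event $\{\#\Pi(t-)\le N\}$ the only atoms of $\mathrm{PPP}_C$ that can change $\#\Pi$ are those $(t,\pi^c)$ for which at least two of the first $N$ integers are non-singletons of $\pi^c$; by exchangeability and \cite[Proposition 3]{MR2110018} this set of atoms has finite intensity, equal to $\sum_{k=2}^{N}\binom{N}{k}\lambda_{N,k}<\infty$ (finiteness because $\Lambda$ is a finite measure on $[0,1]$ with no mass at $1$, so $\lambda_{N,k}$ is finite). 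Likewise, the atoms of $\mathrm{PPP}_F$ that act on one of the first $N$ blocks arrive at finite rate $N\mu(\bar{\mathbb{N}})<\infty$ since $\mu$ is finite. Hence, starting from $n$ blocks, the number of blocks stays finite and is piecewise constant with isolated jumps until, possibly, it explodes to $\infty$; this gives right-continuity of $(\#\Pi(t),t\ge 0)$ and shows $(\#\Pi(t),t<\zeta_\infty)$ is a pure-jump process on $\mathbb{N}$. One must also check that, before $\zeta_\infty$, a coalescence atom and a fragmentation atom never occur at the same time (they are independent PPPs, so this is automatic) and that $\#\Pi$ does not jump at atoms acting only on singleton blocks (there are none, by the reduction to infinite-block partitions).

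\medskip
\noindent\textbf{Step 2: identifying the jump rates.} Next I would compute, on $\{\#\Pi(t-)=n\}$, the instantaneous rate of each jump. For coalescence: an atom $(t,\pi^c)$ merges exactly the blocks whose index $j\le n$ has $X_j=1$; conditionally on the mixing parameter $z$, the $(X_j)_{j\le n}$ are i.i.d.\ Bernoulli$(z)$, so the probability that exactly $k$ of them equal $1$ is $\binom{n}{k}z^k(1-z)^{n-k}$, and integrating against the intensity $z^{-2}\Lambda(\mathrm{d}z)$ of the parameter gives rate $\binom{n}{k}\lambda_{n,k}$ for the transition $n\mapsto n-(k-1)$, matching \eqref{coalpart}. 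For fragmentation: each of the $n$ blocks is split into $k+1$ pieces at rate $\mu(k)$ for $k\in\mathbb{N}$ (transition $n\mapsto n+k$) and into infinitely many pieces at rate $\mu(\infty)$ (transition $n\mapsto\infty$, i.e.\ an explosion), by the definition of the splitting measure $\mu$ as the pushforward of $\mu_{\mathrm{Frag}}$ under $\pi\mapsto\#\pi-1$; summing over the $n$ blocks gives the generator $\mathcal{L}^f$ of \eqref{fragpart}. Adding the two independent contributions yields $\mathcal{L}=\mathcal{L}^c+\mathcal{L}^f$.

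\medskip
\noindent\textbf{Step 3: Markov property and conclusion.} Finally, the strong Markov property of the pair of Poisson point processes (they are time-homogeneous and independent of $\Pi(0)$), combined with the fact that the evolution of $\#\Pi$ on $\{\#\Pi(t-)=n\}$ depends on the current configuration only through $n$ — because both $\mathrm{Coag}$ and $\mathrm{Frag}$ act in an exchangeable, index-symmetric way on the (infinite) blocks — shows that $(\#\Pi(t),t<\zeta_\infty)$ is a time-homogeneous Markov chain. Since its holding rates $q_n:=\sum_{k=2}^n\binom{n}{k}\lambda_{n,k}+n\mu(\bar{\mathbb{N}})$ are finite for every $n$, it is a genuine continuous-time Markov chain with generator $\mathcal{L}$ on functions supported on $\mathbb{N}$, killed (sent to $\infty$) at rate $n\mu(\infty)$; this is exactly the assertion. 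The one delicate point, and the place I expect to spend the most care, is Step~1: one has to argue cleanly that the restriction of $\mathrm{PPP}_C$ to atoms that are ``visible'' at the level of the first $N$ coordinates is a genuine Poisson process of finite intensity and that, as $\#\Pi$ increases, this remains true up to — but only up to — the explosion time $\zeta_\infty$; this is precisely where the assumption that fragmentation occurs at finite rate and that $\Lambda$ has no atom at $1$ is used, and it is what prevents the Markov property from extending past $\zeta_\infty$ without an additional prescription of the dynamics at $\infty$. (This is the content of \cite[Proposition 2.11]{cdiEFC}, whose proof I would follow.)
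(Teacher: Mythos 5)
Your proposal is correct and follows essentially the same route as the source: the paper itself does not reprove this lemma (it is imported verbatim as \cite[Proposition 2.11]{cdiEFC}), but the Poissonian construction and the infinitesimal rate computations you give in Steps 1--2 are exactly the ones the paper lays out informally in Section \ref{backgroundEFC} (finite visible coalescence rate $\sum_{k=2}^{n}\binom{n}{k}\lambda_{n,k}$, fragmentation rate $n\mu(k)$ per jump of size $k$, and the jump to $\infty$ at rate $n\mu(\infty)$). The only wording to tighten is in Step 1: the selection variables $X_j$ index the \emph{blocks} of $\Pi(t-)$, not the first $N$ integers, but your rate computation already uses them in the correct sense.
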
 
\subsubsection{A coupling in the number of initial blocks}

Several coupling procedures have been designed in \cite{cdiEFC} in order to study the process $(N_t,t\geq 0)$ started from $N_0=\infty$. As we will use it later, we recall \cite[Lemma 3.3 and Lemma 3.4]{cdiEFC} where it is explained how to define on the same probability space  as $(\Pi(t),t\geq 0)$, a monotone coupling of the block counting process $(\#\Pi(t),t\geq 0)$ in the initial values. For the sake of simplicity, we only consider EFC processes whose initial partitions have all blocks of infinite size. Let $n\in \bar{\mathbb{N}}$, set $(\Pi^{(n)}(t),t\geq 0)$ the process started from the first $n$ blocks of $\Pi(0)$, i.e. $\Pi^{(n)}(0)=\{\Pi_1(0),\ldots,\Pi_n(0)\}$, constructed in a Poisson way as follows:
\begin{align*}
\Pi^{(n)}(t)&=\mathrm{Coag}\left(\Pi^{(n)}(t-),\pi^{c}\right) \text{ if } (t,\pi^{c}) \text{ is an atom of } \mathrm{PPP}_C,\\
\Pi^{(n)}(t)&=\mathrm{Frag}\left(\Pi^{(n)}(t-),\pi^{f},j\right) \text{ if } (t,\pi^{f},j)\, \text{ is an atom of } \mathrm{PPP}_F.
\end{align*}
The process $(\Pi^{(n)}(t),t\geq 0)$ follows the fragmentations and coagulations in the system restricted to the integers belonging to $\cup_{i=1}^{n}\Pi_i(0)$.

In the sequel, we write $(N_t^{(n)},t\geq 0):=(\#\Pi^{(n)}(t),t\geq 0)$ for the process counting the blocks of  $(\Pi^{(n)}(t),t\geq 0)$. Notice that by definition, $(\Pi^{(\infty)}(t),t\geq 0)$ coincides with $(\Pi(t),t\geq 0)$ and thus $(N_t^{(\infty)},t\geq 0)=(N_t,t\geq 0)$.The process $(N_t^{(n)},t\geq 0)$ is at the core of our study, and from now on we call it simply ``block counting process".
\begin{lemma}[Monotonicity in the initial values, Lemma 3.4 in \cite{cdiEFC}]\label{monotonicity} For any $n\geq 1$,
\begin{center}
$N_t^{(n)}\leq N^{(n+1)}_t$ and $N_t^{(n)}\underset{n\rightarrow \infty}{\longrightarrow} N^{(\infty)}_t$ for all $t\geq 0$ a.s. \end{center}
Moreover, the process $(N^{(n)}_t,t<\zeta_{\infty})$ is Markovian and has the same law as $(N_t,t<\zeta_{\infty})$ when $N_0=n<\infty$.
\end{lemma}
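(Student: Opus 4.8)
The plan is to exploit that all the processes $(\Pi^{(n)}(t),t\geq 0)$, $n\in\bar{\mathbb{N}}$, are driven by the \emph{same} pair of Poisson point processes $\mathrm{PPP}_C$ and $\mathrm{PPP}_F$, so that they come automatically coupled on one probability space. Write $B_n:=\bigcup_{i=1}^{n}\Pi_i(0)$ for $n\in\mathbb{N}$, a (random) subset of $\mathbb{N}$ with $B_n\uparrow\mathbb{N}$ since $\Pi(0)$ is a partition of $\mathbb{N}$, and for a partition $\pi$ and a set $B$ let $\pi|_B$ denote the partition of $B$ whose blocks are the non-empty sets $\pi_i\cap B$. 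The first and central step is the restriction identity
\[
\Pi^{(n)}(t)=\Pi^{(n+1)}(t)\big|_{B_n}=\Pi(t)\big|_{B_n}\qquad\text{for all }t\geq 0,\ \text{a.s.}
\]
This is checked by induction along the successive atoms of $\mathrm{PPP}_C$ and $\mathrm{PPP}_F$: it holds at $t=0$ because $\Pi^{(n)}(0)=\{\Pi_1(0),\dots,\Pi_n(0)\}$ equals $\Pi(0)|_{B_n}$ (the blocks $\Pi_i(0)$ with $i\leq n$ are contained in $B_n$, those with $i>n$ are disjoint from it); between two consecutive atoms all the processes are constant, so the identity persists; and at an atom $(t,\pi^c)$ of $\mathrm{PPP}_C$ (resp. $(t,\pi^f,j)$ of $\mathrm{PPP}_F$) one uses that $\mathrm{Coag}(\cdot,\pi^c)$ (resp. $\mathrm{Frag}(\cdot,\pi^f,j)$) commutes with restriction to $B_n$, which is a direct set-theoretic computation from the definitions of $\mathrm{Coag}$ and $\mathrm{Frag}$. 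The delicate point, which I expect to be the main obstacle, is the bookkeeping of block indices: restricting to $B_n$ may empty some blocks and shift the labelling used by $\mathrm{PPP}_F$, so one has to invoke the exchangeability of the paint-box measures $\mu_{\mathrm{Coag}},\mu_{\mathrm{Frag}}$ — equivalently the consistency of the Poissonian construction under restriction recorded in \cite[Section 3]{MR2110018} and \cite[Lemma 3.3]{cdiEFC} — to see that the dynamics of $\Pi^{(n)}$ are exactly those of the EFC process restricted to $B_n$. This is routine within that framework but is where the real work lies.

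Granting the restriction identity, the monotonicity is immediate: restricting a partition to a subset cannot increase its number of non-empty blocks, and $B_n\subseteq B_{n+1}\subseteq\mathbb{N}$, hence
\[
N_t^{(n)}=\#\big(\Pi^{(n+1)}(t)|_{B_n}\big)\leq\#\Pi^{(n+1)}(t)=N_t^{(n+1)}\leq\#\Pi(t)=N_t^{(\infty)}.
\]
In particular $(N_t^{(n)})_{n\geq1}$ is non-decreasing, so $\lim_n N_t^{(n)}$ exists in $\bar{\mathbb{N}}$ and is $\leq N_t^{(\infty)}$. For the reverse bound, fix $t$ and any finite collection of distinct blocks of $\Pi(t)$; choosing one integer in each and taking $n$ large enough that $B_n$ contains all of them, the restriction $\Pi(t)|_{B_n}$ already has at least that many blocks. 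Since $B_n\uparrow\mathbb{N}$ and every block of $\Pi(t)$ is non-empty, this gives $\#\big(\Pi(t)|_{B_n}\big)\uparrow\#\Pi(t)$, i.e. $N_t^{(n)}\to N_t^{(\infty)}$ for all $t\geq0$ a.s.

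It remains to identify the law of $(N_t^{(n)},t<\zeta_\infty)$. Since $\Pi^{(n)}(0)=\{\Pi_1(0),\dots,\Pi_n(0)\}$ has $n$ blocks, all of infinite size, the process $(\Pi^{(n)}(t),t\geq0)$ is itself an EFC process with the same coalescence and splitting measures, so Lemma \ref{genEFC} applies verbatim: up to its first explosion time, $(N_t^{(n)},t<\zeta_\infty)$ is a continuous-time Markov chain with generator $\mathcal{L}=\mathcal{L}^c+\mathcal{L}^f$ given by \eqref{coalpart} and \eqref{fragpart}. The jump rates there — $\binom{m}{k}\lambda_{m,k}$ for a coalescence and $m\mu(k)$ for a fragmentation — depend on the current configuration only through the number of blocks $m$; hence the law of $(N_t^{(n)},t<\zeta_\infty)$ depends on $\Pi^{(n)}(0)$ solely through $\#\Pi^{(n)}(0)=n$, and therefore coincides with the law of $(N_t,t<\zeta_\infty)$ under $\mathbb{P}_n$, i.e. that same Markov chain started from $n$. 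This proves the last assertion.
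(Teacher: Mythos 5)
The paper itself gives no proof of this lemma (it is imported verbatim from \cite{cdiEFC}), so your argument has to stand on its own, and its central step does not hold. The pathwise identity $\Pi^{(n)}(t)=\Pi(t)|_{B_n}$ is false for the coupling obtained by applying the same Poisson atoms to $\Pi^{(n)}$ and to $\Pi$. The reason is precisely the labelling issue you flag and then set aside as routine: blocks are indexed by the rank of their least element, $\mathrm{Frag}(\cdot,\pi^{f},j)$ acts on the block of rank $j$, and since $B_n=\cup_{i\leq n}\Pi_i(0)$ is not an initial segment of $\mathbb{N}$, restriction to $B_n$ permutes ranks: a block of $\Pi(t-)$ can meet $B_n$ without its least element lying in $B_n$, and a block disjoint from $B_n$ shifts all subsequent ranks. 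Concretely, with $n=1$: let a first fragmentation split $\Pi_1(0)$ into $F_1\ni 1$ and $F_2\ni 3$ while $2\in\Pi_2(0)$; then $F_2$ has rank $2$ in $\Pi^{(1)}$ but rank $3$ in $\Pi$, so a later atom $(t,\pi^{f},2)$ fragments $F_2$ in the small system but $\Pi_2(0)$ --- which is disjoint from $B_1$ --- in the large one, leaving $\Pi^{(1)}(t)$ with $1+\#\pi^{f}$ blocks while $\Pi(t)|_{B_1}$ still has $2$. Exchangeability of $\mu_{\mathrm{Coag}}$ and $\mu_{\mathrm{Frag}}$ can at best convert this into an identity in law, which is not enough: the lemma asserts an almost sure ordering and almost sure convergence, and your derivations of both $N^{(n)}_t\leq N^{(n+1)}_t$ and $N^{(n)}_t\to N^{(\infty)}_t$ rest on the pathwise identity.

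In effect the two halves of your proof use two different (equal-in-law but pathwise distinct) couplings. Monotonicity and convergence are immediate for $t\mapsto\#\big(\Pi(t)|_{B_n}\big)$, but that process is not autonomous (its jumps are triggered by blocks of $\Pi(t)$ it cannot see), so Lemma \ref{genEFC} does not ``apply verbatim'' to it and its Markov property would need a separate rate computation. Conversely, the displayed Poisson construction of $\Pi^{(n)}$ is an autonomous chain to which your final paragraph correctly applies, but for it monotonicity must be proved directly, for instance by comparing block counts atom by atom: a coagulation atom $\pi^{c}$ sends a configuration of $m$ blocks to one with $\#\big(\pi^{c}|_{[m]}\big)$ blocks and $m\mapsto\#\big(\pi^{c}|_{[m]}\big)$ is non-decreasing, while a fragmentation atom $(t,\pi^{f},j)$ increases $N^{(n)}$ only when $j\leq N^{(n)}_{t-}$, so neither type of event can reverse $N^{(n)}_{t-}\leq N^{(n+1)}_{t-}$ (one must also check that only finitely many coagulation atoms affect a finite configuration on a compact time interval, since $z^{-2}\Lambda(\ddr z)$ may be infinite). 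The convergence $N^{(n)}_t\to N^{(\infty)}_t$ then still requires an argument, since your block-selection reasoning is unavailable once the restriction identity is dropped; that is the genuinely nontrivial content of \cite[Lemma 3.4]{cdiEFC}.
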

The next lemma ensures that one can approach from below the process $(N_t^{(n)},t\geq 0)$ by a non-decreasing sequence of non-explosive processes. For any $m\in \mathbb{N}$, set $\bar{\mu}(m):=\mu(\{m,\ldots, \infty\})$. 
\begin{lemma}[Lemma 3.8 in \cite{cdiEFC}] \label{Nnm} For any $n\in \bar{\mathbb{N}}$, there exists a non-decreasing sequence of processes $(N^{(n)}_m(t),t\geq 0)_{m\geq 1}$ started from $n\in \bar{\mathbb{N}}$ such that 
\begin{itemize}
\item[i)] for any $m\geq 1$, $(N^{(n)}_m(t),t\geq 0)$ has generator, the operator $\mathcal{L}^{m}$ acting on any function $g: \mathbb{N}\rightarrow \mathbb{R}_+$ as follows 
\begin{equation}\label{genNm}\mathcal{L}^{m}g(\ell):=\mathcal{L}^{c}g(\ell)+\ell\sum_{k=1}^{m}\mu_{m}(k)(g(\ell+k)-g(\ell)) \text{, for all } \ell \in \mathbb{N},
\end{equation}
where $\mu_{m}(k):=\mu(k)$ if $k\leq m-1$ and $\mu_m(m):=\bar{\mu}(m)$;
\item[ii)] for any $m\in \mathbb{N}$, the process $(N_m^{(n)}(t),t\geq 0)$ does not explode;
\item[iii)] almost surely for any $n\in \bar{\mathbb{N}}$, $m\in \mathbb{N}$ and all $t\geq 0$, $N^{(n)}_{m}(t)\leq N^{(n+1)}_{m}(t)$, 
and  $$\underset{m\rightarrow \infty}{\lim} N^{(n)}_m(t)=N^{(n)}_t \text{a.s.}$$ 
\end{itemize}
\end{lemma}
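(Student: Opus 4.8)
The plan is to realise all the processes $(N^{(n)}_m(t),t\ge 0)$ on the same probability space as the EFC process $(\Pi(t),t\ge 0)$ by running the Poissonian construction with \emph{truncated} fragmentations. Given $\pi^{f}\in\mathcal P_\infty$, let $\pi^{f,m}$ be the partition obtained from $\pi^{f}$ by merging into a single block all its blocks of index $>m$; then $\#\pi^{f,m}=\big((\#\pi^{f}-1)\wedge m\big)+1$ and $\pi^{f,m}$ coarsens $\pi^{f}$. Set $\Pi^{(n)}_m(0):=\{\Pi_1(0),\dots,\Pi_n(0)\}$ and let $(\Pi^{(n)}_m(t),t\ge 0)$ evolve from $\mathrm{PPP}_C$ and $\mathrm{PPP}_F$ exactly as $(\Pi^{(n)}(t),t\ge 0)$ in Lemma~\ref{monotonicity}, except that at each atom $(t,\pi^{f},j)$ of $\mathrm{PPP}_F$ one applies $\mathrm{Frag}(\Pi^{(n)}_m(t-),\pi^{f,m},j)$ rather than $\mathrm{Frag}(\Pi^{(n)}_m(t-),\pi^{f},j)$; put $N^{(n)}_m(t):=\#\Pi^{(n)}_m(t)$. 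Since the pushforward of the splitting measure $\mu$ under $k\mapsto k\wedge m$ is precisely $\mu_m$, reading off the jump rates (the coalescent part being unchanged) shows that, for $n<\infty$, $(N^{(n)}_m(t),t\ge 0)$ is the continuous-time Markov chain with generator $\mathcal L^{m}$ of \eqref{genNm}, which is (i).

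For (ii), from a finite state $\ell$ the coalescent part of $\mathcal L^{m}$ can only decrease $N^{(n)}_m$, while its fragmentation part increases it by at most $m$ at total rate $\ell\,\mu(\bar{\mathbb N})$; hence, comparing generators on non-decreasing test functions, $(N^{(n)}_m(t),t\ge 0)$ is stochastically dominated by the pure-birth chain jumping from $\ell$ to $\ell+m$ at rate $\mu(\bar{\mathbb N})\,\ell$, and the latter is conservative because $\sum_{\ell\ge 1}\big(\mu(\bar{\mathbb N})\,\ell\big)^{-1}=\infty$. So $(N^{(n)}_m(t),t\ge 0)$ does not explode; in particular, for $n<\infty$ only finitely many atoms act on $\Pi^{(n)}_m$ over any compact interval, which makes the construction well posed, and for $n=\infty$ one sets $N^{(\infty)}_m(t):=\lim_{n\to\infty}N^{(n)}_m(t)=\#\Pi^{(\infty)}_m(t)$.

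The monotonicity in $n$ stated in (iii) and the limit $N^{(n)}_m(t)\uparrow N^{(\infty)}_m(t)$ follow as in Lemma~\ref{monotonicity}: since the same $\pi^{f,m}$ is used for all $n$, the consistency of $\mathrm{Coag}$ and $\mathrm{Frag}$ gives that $\Pi^{(n)}_m(t)$ is the restriction of $\Pi^{(n+1)}_m(t)$ to $\bigcup_{i\le n}\Pi_i(0)$, restricting a partition can only lower its number of non-empty blocks, and $\bigcup_n\bigcup_{i\le n}\Pi_i(0)=\mathbb N$.

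The core of the argument is the monotonicity in $m$ and the limit $N^{(n)}_m(t)\to N^{(n)}_t$ in (iii). The plan is to prove, by induction over the atoms of $\mathrm{PPP}_C\cup\mathrm{PPP}_F$ acting on a fixed initial segment $[N]$, that for every $t\ge 0$ the partition $\Pi^{(n)}_m(t)$ coarsens both $\Pi^{(n)}_{m+1}(t)$ and $\Pi^{(n)}(t)$, whence $N^{(n)}_m(t)\le N^{(n)}_{m+1}(t)\le N^{(n)}_t$. The convergence then comes for free: a block of index $>m$ of any $\pi^{f}$ has least element $>m$, so $\pi^{f,m}$ and $\pi^{f}$ agree on $[N]$ once $m\ge N$, hence $\Pi^{(n)}_m(t)$ and $\Pi^{(n)}(t)$ agree on $[N]$ for all $t$ once $m\ge N$; together with $\#(\pi|_{[N]})\uparrow\#\pi$ as $N\to\infty$ and the just-obtained $m$-monotonicity this forces $N^{(n)}_m(t)\uparrow N^{(n)}_t$ for every $t$, with no need to single out the explosion time $\zeta_\infty$. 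The expected main obstacle is exactly the coarsening claim: one must verify that the coarsening order between two partitions is preserved both under $\mathrm{Coag}(\cdot,\pi^{c})$ along a common atom $\pi^{c}$ and under replacing $\mathrm{Frag}(\cdot,\pi^{f,m+1},j)$ by $\mathrm{Frag}(\cdot,\pi^{f,m},j)$ at a common block $j$, the delicate part being the interplay between the reordering of blocks by least element inside $\mathrm{Coag}$ and $\mathrm{Frag}$ and the (in general non-monotone) surjection witnessing a coarsening; here one uses that $\pi^{f,m}$ differs from $\pi^{f,m+1}$ only by merging its two highest-index blocks and that the coarsening defect remains confined to a single current block from the moment it is created. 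If a direct pathwise argument turns out to be cumbersome, one can instead first note the in-law domination $N^{(n)}_m\preceq N^{(n)}_{m+1}\preceq N^{(n)}$ — immediate from $\mathcal L^{m}g\le\mathcal L^{m+1}g\le\mathcal L g$ for non-decreasing $g$, which is the elementary inequality based on $\bar\mu(m)=\mu(m)+\bar\mu(m+1)$ — and then upgrade it to the required almost sure monotone coupling. \qed
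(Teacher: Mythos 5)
Your construction of the $(N^{(n)}_m)$ by truncating the fragmentation atoms is natural, and the peripheral steps are sound: the pushforward of $\mu$ under $k\mapsto k\wedge m$ is indeed $\mu_m$; domination by a linear-rate pure-birth chain gives non-explosion; the restriction argument gives the monotonicity in $n$; and the observation that $\pi^{f,m}$ and $\pi^{f}$ induce the same trace on $[N]$ once $m\geq N$ (an element $a\leq N$ of a block sits at a position $q\leq a\leq m$, hence in a block of $\pi^{f}$ of index $\leq q\leq m$, unaffected by the merging) correctly yields $\liminf_{m}N^{(n)}_m(t)\geq N^{(n)}_t$. Note that the paper gives no proof of this lemma — it is imported from \cite{cdiEFC} — so the comparison can only be with a correct argument, not with "the paper's".

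The genuine gap is exactly the step you flag: the monotonicity in $m$ and the upper bound $N^{(n)}_m(t)\leq N^{(n)}_t$, without which your restriction argument yields only a liminf. The invariant you propose — that $\Pi^{(n)}_m(t)$ coarsens $\Pi^{(n)}_{m+1}(t)$ and $\Pi^{(n)}(t)$ pathwise — is not preserved by the dynamics once the partitions have diverged, for two reasons. First, $\mathrm{Coag}(\cdot,\pi^{c})$ merges blocks according to their \emph{current indices}, and these indices differ between the two partitions: for instance $\pi=\{\{1,3\},\{2\},\dots\}$ coarsens $\pi'=\{\{1\},\{2\},\{3\},\dots\}$, yet coagulating blocks $2$ and $3$ of each destroys the coarsening. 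Second, applying the same $\pi^{f}$ to the $j$-th block of each partition, when that block is strictly larger in the truncated process, can split it into \emph{more} pieces, so even the scalar inequality $\#\Pi^{(n)}_m(t)\leq \#\Pi^{(n)}(t)$ does not propagate through a common fragmentation atom by any obvious induction. Your fallback (the generator inequality $\mathcal{L}^{m}g\leq\mathcal{L}^{m+1}g\leq\mathcal{L}g$ on non-decreasing $g$, then an "upgrade" to an almost sure coupling) is not carried out and is not automatic: one needs a single coupling valid simultaneously for all $m$, compatible with the restriction coupling in $n$ you already fixed, and under which the monotone limit is identified with $N^{(n)}_t$ on the same probability space as $\Pi$. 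The clean repair is to leave the partition level for the $m$-comparison and couple the counting chains directly through common Poisson marks: attach to each level $i\in\mathbb{N}$ an independent fragmentation clock with marks distributed as $\mu(\cdot)/\mu(\mathbb{N})$, let $N_m$ jump by $k\wedge m$ when a level $i\leq N_m(t-)$ rings with mark $k$, and realize each coalescence through an atom $(z,(u_i)_{i\geq 1})$ that removes $\big(\#\{i\leq \ell : u_i\leq z\}-1\big)^{+}$ blocks from state $\ell$. If $p\leq p'$ then every level active for the smaller chain is active for the larger one, and the coalescent decrements differ by at most $\#\{i\in(p,p']:u_i\leq z\}\leq p'-p$, so the order in $m$ and the bound by $N^{(n)}$ are preserved atom by atom; the laws are then identified through the generators $\mathcal{L}^{m}$ and $\mathcal{L}$.
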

\subsubsection{Summary of sufficient conditions for different boundary behaviors for $(N_t,t\geq 0)$}

Sufficient conditions for the process $(N_t,t\geq 0):=(\#\Pi(t),t\geq 0)$ to have boundary $\infty$ absorbing or not have been found in \cite{cdiEFC}. More precisely the process  can have the boundary $\infty$ exit, entrance or even regular for certain heavy-tailed splitting measures.

\begin{lemma}[Corollary 1.2-(2) in \cite{cdiEFC}]\label{key} Let $\lambda>0$. If the coalescence measure has no Kingman part, i.e. $\Lambda(\{0\})=0$, and $\mu(\infty)=\lambda$, then the process $(N_t,t\geq 0):=(\#\Pi(t),t\geq 0)$ has boundary $\infty$ as an exit, that is to say, for any $t\geq \zeta_{\infty}$, $N_t=\infty$ a.s.
where $\zeta_{\infty}:=\inf\{t>0: N_t=\infty\}$.
\end{lemma}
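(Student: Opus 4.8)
An exit boundary is \emph{accessible} and \emph{absorbing}, and I would establish the two features in turn. (If Schweinsberg's condition \eqref{cdipsi} fails the claim is trivial, since then the $\Lambda$-coalescent alone keeps infinitely many blocks at every time and fragmentation only increases their number; so assume \eqref{cdipsi} holds.) Accessibility is essentially free from the Poisson construction of Section \ref{backgroundEFC}: among the atoms $(t,\pi^{f},j)$ of $\mathrm{PPP}_F$ whose partition $\pi^{f}$ has infinitely many blocks, those with $j=1$ arrive according to a Poisson process on $\mathbb{R}_+$ of constant rate $\mu(\{\infty\})=\lambda$. At the first such atom, occurring at a time $\mathbf{e}\sim\mathrm{Exp}(\lambda)<\infty$ a.s., the block containing the integer $1$ — present at every time — is shattered into infinitely many blocks, so $N_{\mathbf{e}}=\infty$; hence $\zeta_\infty\leq\mathbf{e}<\infty$ $\mathbb{P}_n$-a.s. for every $n\in\mathbb{N}$.

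The substance of the statement is absorption: $\mathbb{P}_\infty(N_t=\infty\text{ for all }t\geq0)=1$; granting this, the displayed identity follows by the Markov property of Lemma \ref{genEFC} at $\zeta_\infty$. I would use the monotone approximations of Lemmas \ref{monotonicity} and \ref{Nnm}, which give $N_m^{(n)}(t)\leq N_t$ for all $m,n$ and $t\geq0$, where $(N_m^{(n)}(t),t\geq0)$ is the Markov chain with generator $\mathcal{L}^m$ of \eqref{genNm} started from $n$. The structural point is that, since $\bar\mu(m)=\mu(\{m,\dots,\infty\})\geq\mu(\{\infty\})=\lambda$, the chain $N_m^{(n)}$ makes from every state $\ell\geq1$ an upward jump of size $m$ at rate at least $\ell\lambda$, so that $\mathcal{L}^m\,\mathrm{id}(\ell)\geq\ell m\lambda-\Phi(\ell)$, the coalescent part contributing precisely $-\Phi(\ell)$. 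Here the hypothesis $\Lambda(\{0\})=0$ enters: it forces $\Phi(\ell)=o(\ell^{2})$, hence there is a threshold $n_m$, with $n_m\to\infty$ as $m\to\infty$, below which this drift is strictly positive and bounded away from $0$. A Foster--Lyapunov / scale-function estimate should then show that, for each fixed $K$, the chain $N_m^{(n)}$ started from $n=n_m$ stays above $K$ throughout $[0,t]$ with probability at least $1-\varepsilon(m)$, where $\varepsilon(m)\to0$. Taking $n=n_m$ and letting $m\to\infty$, then $K\to\infty$, yields $\mathbb{P}_\infty(N_t=\infty)=1$ for every fixed $t>0$; since $N$ is right-continuous (Lemma \ref{genEFC}), running this along a countable dense set of times gives absorption.

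The main obstacle is this last quantitative step — building the Lyapunov function and controlling, uniformly in $m$, the downward excursions of $N_m^{(n)}$ below the threshold $n_m$; in probabilistic terms it is the assertion that the ``infinitely fast re-lofting'' produced near $\infty$ by the $\mu(\{\infty\})$-mass prevents the $\Lambda$-coalescent from ever pulling the limiting process down to a finite level. An alternative, and presumably the route of \cite{cdiEFC}, is to invoke the general sufficient condition obtained there for $\infty$ to be an exit of a simple EFC process and to verify that it is automatically met once $\mu(\{\infty\})>0$: indeed $\bar\mu(n)\geq\lambda$ for all $n$ while $\Lambda(\{0\})=0$ keeps $\Phi$ subquadratic, and these are exactly the inputs the criterion requires.
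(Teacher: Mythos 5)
A preliminary remark: the paper does not prove this lemma at all — it is imported verbatim as Corollary 1.2-(2) of \cite{cdiEFC} and used as a black box — so your attempt can only be judged on its own merits, not against an internal argument. The accessibility half of your proposal is correct and essentially complete: the atoms of $\mathrm{PPP}_F$ with $\#\pi^{f}=\infty$ and index $j=1$ do form a rate-$\lambda$ Poisson process, the first block is always present, and the first such atom shatters it into infinitely many pieces, so $\zeta_\infty$ is dominated by an $\mathrm{Exp}(\lambda)$ variable.

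The absorption half, which is the actual substance of the lemma, contains a genuine gap: the step ``a Foster--Lyapunov / scale-function estimate should then show that $N^{(n_m)}_m$ stays above $K$ throughout $[0,t]$ with probability $1-\varepsilon(m)$'' is asserted, not proved, and it cannot follow from the inputs you feed it. Your inputs are: the drift $\mathcal{L}^m\mathrm{id}(\ell)\geq \ell m\lambda-\Phi(\ell)$ is positive and bounded away from $0$ below a threshold $n_m\to\infty$. But exactly the same two facts hold when $\Lambda(\{0\})=c_{\mathrm{k}}>0$ (there $\Phi(\ell)\sim c_{\mathrm{k}}\ell^2/2$ and one may take $n_m$ of order $m\lambda/c_{\mathrm{k}}\to\infty$), and in that case the conclusion is \emph{false} whenever $2\lambda/c_{\mathrm{k}}<1$, by Remark \ref{remkey}. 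So a first-moment drift condition below a diverging threshold is not the right hypothesis; the proof must exploit the full jump structure — in particular that the $\Lambda$-coalescent makes unbounded downward jumps (from state $\ell$ there is positive rate $\binom{\ell}{k}\lambda_{\ell,k}$ of falling to $\ell-k+1$ for every $k\leq\ell$), whose cumulative effect over $[0,t]$ must be beaten uniformly in $m$. This is precisely the phase-transition analysis carried out in \cite{cdiEFC}, and your proposed fallback (``invoke the general sufficient condition obtained there'') is just citing the result to be proved. Two smaller points: (i) deducing ``$N_t=\infty$ for all $t\geq\zeta_\infty$'' from ``$\mathbb{P}_\infty(N_t=\infty\ \forall t)=1$'' by ``the Markov property at $\zeta_\infty$'' is not licensed by Lemma \ref{genEFC}, which only gives the Markov property while the chain lies in $\mathbb{N}$ (the Feller property on $\bar{\mathbb{N}}$ is only established later, in Theorem \ref{Markovblockcounting}, and partly rests on the dualities); the clean route is pathwise, applying the non-coming-down statement to the EFC restarted from the infinite partition $\Pi(\zeta_\infty)$ in the Poisson construction. (ii) Your opening aside, reducing to the case where \eqref{cdipsi} holds via domination of the pure coalescent by the EFC block count, is fine but itself uses a monotone coupling that deserves a sentence.
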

\begin{remark}\label{remkey} 
When $\Lambda(\{0\})=c_{\mathrm{k}}>0$, the process $(N_t^{\lambda},t\geq 0)$ comes down from infinity (i.e leaves $\infty$) if and only if $\frac{2\lambda}{c_{\mathrm{k}}}<1$, see Kyprianou et al. \cite[Theorem 1.1]{kyprianou2017} and \cite[Corollary 1.2-(1)]{cdiEFC}. Lemma \ref{key} is a key lemma in our construction of an extension getting out from $1$ of the minimal $\Lambda$-WF process with selection. 
\end{remark}
The next lemmas provide sufficient condition for $\infty$ to be an entrance, an exit or a regular boundary. They will appear in Section \ref{application} in a ``dual" way for the $\Lambda$-WF processes with selection. 
\begin{lemma}[Theorem 3.4 in \cite{explosion}] \label{nonexplosioncriterion} Assume $\mu(\infty)=0$. If $\sum_{n=2}^{\infty}\frac{n}{\Phi(n)}\bar{\mu}(n)<\infty$, then $\infty$ is inaccessible for the process $(N_t,t\geq 0)$. Moreover, in this case,
\begin{itemize}
\item[i)] if $\sum_{n=2}^{\infty}\frac{1}{\Phi(n)}<\infty$ then $\infty$ is an entrance boundary,
\item[ii)] if $\sum_{n=2}^{\infty}\frac{1}{\Phi(n)}=\infty$ then $\infty$ is a natural boundary.
\end{itemize}
\end{lemma}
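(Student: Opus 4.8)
\textbf{Proof proposal for Lemma \ref{nonexplosioncriterion}.}

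The plan is to control the block counting process from below and from above by the auxiliary non-decreasing family $(N^{(n)}_m(t),t\ge 0)_{m\ge 1}$ of Lemma \ref{Nnm}, whose generators $\mathcal{L}^m$ in \eqref{genNm} have bounded fragmentation jumps, and to estimate the expected time for each $N^{(n)}_m$ to reach a high level. First I would write, for a fixed large integer $a$, the first passage time $T^{(n)}_m(a):=\inf\{t>0: N^{(n)}_m(t)\ge a\}$ and look for a function $h:\mathbb{N}\to\mathbb{R}_+$ with $\mathcal{L}^m h(\ell)\le -1$ for $\ell<a$, so that $(h(N^{(n)}_m(t\wedge T^{(n)}_m(a)))+t\wedge T^{(n)}_m(a))$ is a supermartingale and $\mathbb{E}_n[T^{(n)}_m(a)]\le h(n)$. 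The natural candidate is a telescoping sum $h(n)=\sum_{k=n}^{a-1} c_k$ with increments chosen so that the coalescent part $\mathcal{L}^c$ — which pushes $N$ \emph{down}, hence helps — and the fragmentation part cooperate. Since $\mathcal{L}^c h(\ell)\le 0$ for a non-increasing $h$ is the wrong sign, I would instead take $h$ \emph{non-increasing in such a way that the fragmentation drift dominates}; more precisely, using that a block number $\ell$ fragments at total rate $\ell\bar\mu(2)$-ish and that the typical upward jump from $\ell$ has size of order governed by $\bar\mu$, while the coalescent at level $\ell$ decreases the count by roughly $\Phi(\ell)/\ell$ per unit time, the effective downward drift of $N$ near level $\ell$ is $\sim \Phi(\ell)$ and the upward drift is $\sim \ell\cdot(\text{mean jump})$. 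The role of the hypothesis $\sum_n \frac{n}{\Phi(n)}\bar\mu(n)<\infty$ is exactly that the series of ratios (upward push)/(downward push) summed against the fragmentation tail converges, which should give a uniform-in-$m$ bound $\mathbb{E}_n[T^{(n)}_m(a)]\ge$ (something that stays bounded below) — or dually that $\sup_m \mathbb{P}_n(T^{(n)}_m(a)\le t)$ can be made small.

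Concretely, the cleanest route is the Foster–Lyapunov/hitting-time estimate of \cite{explosion} itself (Theorem 3.4, which is the statement being quoted), so in the write-up I would simply \emph{invoke} that theorem for the inaccessibility conclusion, and then prove parts (i) and (ii) using the duality/monotonicity already set up. For inaccessibility: by Lemma \ref{Nnm}(iii), $N^{(n)}_t=\lim_{m}N^{(n)}_m(t)$ and by monotone convergence $\mathbb{E}_n[\zeta_\infty\wedge t]=\lim_m\downarrow$? — here one must be careful with the direction of monotonicity, so I would instead argue: if for every $t$, $\sup_n\mathbb{P}_n(T^{(n)}_m(a)\le t)\to 0$ as $a\to\infty$ uniformly in $m$ and $n$, then letting $m\to\infty$ gives $\sup_n\mathbb{P}_n(N^{(n)}_t\ge a)\to 0$, hence $N^{(\infty)}_t=\lim_n N^{(n)}_t<\infty$ a.s.\ for every fixed $t$, i.e.\ $\infty$ is inaccessible. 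The summability $\sum_n \frac{n}{\Phi(n)}\bar\mu(n)<\infty$ is what feeds the uniform tail bound via the explicit Lyapunov function.

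For part (i), assume additionally $\sum_n \frac{1}{\Phi(n)}<\infty$, i.e.\ the pure $\Lambda$-coalescent comes down from infinity (Schweinsberg \cite{CDI}). Starting $(N_t,t\ge 0)$ from $N_0=\infty$: between successive fragmentation events — which arrive at a locally finite rate because $\mu(\bar{\mathbb{N}})<\infty$ and, on $\{N<\infty\}$, the total fragmentation rate from level $n$ is $n\mu(\mathbb{N})<\infty$ — the process evolves as a pure $\Lambda$-coalescent, which instantaneously enters $\mathbb{N}$. So on any interval before the first fragmentation the process is finite, and since fragmentations are isolated in time (finite rate while finite) the process stays finite for all $t>0$; combined with inaccessibility from finite states this makes $\infty$ an entrance boundary. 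I would make ``instantaneously enters $\mathbb{N}$'' precise using the classical CDI estimate $\mathbb{E}_\infty[\#\Pi^{\mathrm{coal}}(t)]\le \sum_{n\ge2}\tfrac{?}{\Phi(n)}<\infty$ of Schweinsberg, together with the monotone coupling of Lemma \ref{monotonicity}. For part (ii), $\sum_n\frac1{\Phi(n)}=\infty$ means the pure $\Lambda$-coalescent does \emph{not} come down from infinity, so started from $\infty$ the coalescent part alone keeps $N$ at $\infty$ on a time interval of positive length a.s.; since the fragmentation part only increases $N$, the process cannot leave $\infty$, and combined with inaccessibility this is precisely the definition of a natural boundary.

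The main obstacle I expect is the uniform-in-$m$ hitting-time estimate: one needs a single Lyapunov function $h$ (independent of the truncation level $m$) with $\mathcal{L}^m h\le -1$ up to level $a$ and $h(n)$ controlled by a tail of the convergent series $\sum \frac{n}{\Phi(n)}\bar\mu(n)$, and checking $\mathcal{L}^m h\le -1$ requires estimating $\mathcal{L}^c h(\ell)=\sum_{k=2}^\ell\binom{\ell}{k}\lambda_{\ell,k}(h(\ell-k+1)-h(\ell))$ from above for a carefully chosen $h$ — this is the technical heart of Theorem 3.4 of \cite{explosion} and is the step I would lean on that result for rather than reproving. Everything else (the sandwiching via Lemmas \ref{monotonicity}--\ref{Nnm}, and the entrance/natural dichotomy via Schweinsberg's CDI criterion) is then routine bookkeeping.
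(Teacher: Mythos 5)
The paper gives no proof of this lemma: it is imported verbatim as Theorem~3.4 of \cite{explosion}, so to the extent that you ultimately ``invoke that theorem'' you are doing exactly what the paper does. The issues arise in the parts you try to supply yourself.

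First, your Lyapunov setup points in the wrong direction. A function $h$ with $\mathcal{L}^m h\le -1$ below level $a$ makes $h(N^{(n)}_m(t\wedge T^{(n)}_m(a)))+t\wedge T^{(n)}_m(a)$ a supermartingale and yields the \emph{upper} bound $\mathbb{E}_n[T^{(n)}_m(a)]\le h(n)$, i.e.\ it shows high levels are reached quickly --- this is the tool for proving accessibility (or positive recurrence), not inaccessibility. To rule out explosion one needs the opposite inequality (a function $h\uparrow\infty$ with $\mathcal{L}h\le ch$, or direct lower bounds on the passage times), and your sketch never produces one; you notice the tension (``one must be careful with the direction'') but resolve it only by deferring to \cite{explosion}. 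Second, and more seriously, your argument for (i) does not work. The monotone coupling of Lemma~\ref{monotonicity} bounds $N$ from \emph{below} by the pure $\Lambda$-coalescent (fragmentation only adds blocks), so Schweinsberg's coming down from infinity for the coalescent transfers no upper control to $N$. The ``between successive fragmentation events'' picture also breaks down at the boundary: started from $N_0=\infty$ each of the infinitely many blocks fragments at rate $\mu(\bar{\mathbb{N}})>0$, so the total fragmentation rate is infinite and there is no first fragmentation time --- you cannot assume the process is at a finite level $n$ in order to conclude the fragmentation rate is $n\mu(\mathbb{N})<\infty$. Indeed \cite{cdiEFC} exhibits a phase transition in which the coalescent alone comes down from infinity but the EFC block counting process does not; so $\sum_n 1/\Phi(n)<\infty$ by itself cannot imply that $\infty$ is an entrance, and the standing hypothesis $\sum_n\frac{n}{\Phi(n)}\bar\mu(n)<\infty$ must enter the coming-down argument in an essential way, not merely the inaccessibility step. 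Your argument for (ii) is fine: the pure coalescent is a genuine lower bound, so if it stays at $\infty$ so does $N$, and combined with inaccessibility this gives a natural boundary.
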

\begin{remark}\label{finitemeanremark} We recall that the sequence $(\Phi(n)/n, n\geq 2)$ is nondecreasing, see e.g. \cite[Lemma 2.1-(iv)]{limic2015}. Hence the sequence $(\frac{n}{\Phi(n)},n\geq 2)$ is bounded and when the splitting measure $\mu$ admits a first moment,  $\sum_{n=2}^{\infty}\frac{n}{\Phi(n)}\bar{\mu}(n)\leq \frac{2}{\Phi(2)}\sum_{n=2}^{\infty}\bar{\mu}(n)<\infty$, and the condition in Lemma \ref{nonexplosioncriterion} is fulfilled.
\end{remark}
%
The next lemma provides a sufficient condition for $\infty$ to be an exit boundary.
\begin{lemma}[Theorem 3.1 in \cite{explosion}]\label{suffcondexitN}
Assume $\mu(\infty)=0$ and set $\ell :n\mapsto \sum_{k=1}^{n}\bar{\mu}(k)$.

If the map $n\mapsto \ell(n)$ satisfies the following condition $\mathbb{H}$:

\noindent $\mathbb{H}$: there exists a positive function $g$ on $\mathbb{R}_+$ eventually non-decreasing such that
\begin{center} $\ell(n)\geq g(\log n)\log n \text{ for large } n$, $\int^{\infty}\frac{1}{xg(x)}\ddr x<\infty$ \end{center} and \begin{equation}\label{suffcondentrancePhi} \underset{n\rightarrow \infty}{\lim}\  \frac{\Phi(n)}{n\ell(n)}=0,
\end{equation}
then the process $(N_t,t\geq 0)$ has $\infty$ as an exit boundary.
\end{lemma}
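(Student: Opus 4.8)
The plan is to verify the two defining features of an exit boundary for $(N_t,t\ge 0)$: that $\infty$ is \emph{accessible} from the finite states (with positive probability, in fact $\mathbb{P}_n$-a.s.) and that $\infty$ is \emph{absorbing}, i.e. $N_t=\infty$ for all $t\ge\zeta_\infty$. The organising idea is that under $\lim_n\frac{\Phi(n)}{n\ell(n)}=0$ fragmentation overwhelms coalescence at large block numbers: $n\ell(n)$ is the right ``truncated upward drift'' of $(N_t)$ produced by fragmentation, while $\Phi(n)$ is exactly the total coalescence rate of decrease (indeed $\sum_{k=2}^{n}(k-1)\binom nk\lambda_{n,k}=\Phi(n)$). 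Condition $\mathbb H$ is designed so that $\sum_{n\ge 2}\frac1{n\ell(n)}<\infty$: substituting $u=\log x$ and using that $g$ is eventually nondecreasing,
$$\sum_{n}\tfrac{1}{n\ell(n)}\le\mathrm{const}+\int^{\infty}\frac{\ddr x}{x\,g(\log x)\log x}=\mathrm{const}+\int^{\infty}\frac{\ddr u}{u\,g(u)}<\infty ,$$
and this is precisely the condition under which a pure-fragmentation process explodes.

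For accessibility I would build the Lyapunov function
$$V(n):=\sum_{j\ge n}\frac{1}{j\,\ell(j)}\quad(n\ge n_0),\qquad V(\infty):=0,$$
where $n_0$ is a large threshold to be fixed, extended strictly decreasingly but with tiny slope on $\{1,\dots,n_0\}$, so that $V$ is bounded, nonincreasing, vanishes only at $\infty$, and $\|V\|_\infty=V(1)$ is as small as we wish by taking $n_0$ large. The key estimate is $\mathcal LV(n)=\mathcal L^fV(n)+\mathcal L^cV(n)\le-\varepsilon<0$ for $n\ge n_0$. For the fragmentation part one uses $V(n+k)-V(n)=-\sum_{j=n}^{n+k-1}\frac1{j\ell(j)}$, treats small and large jumps $k$ separately (for large $k$ bounding this difference below by $\tfrac12 V(n)$), and invokes $\ell(2n)\le 2\ell(n)$ — a consequence of $\bar\mu$ being nonincreasing — together with $\sum_{k=1}^{n}k\mu(k)=\ell(n)-n\bar\mu(n+1)$, to get $\mathcal L^fV(n)\le-\varepsilon$; the extra strength of $\mathbb H$ over $\sum1/(n\ell(n))<\infty$ is what keeps this bound uniform. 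For the coalescence part $\mathcal L^cV(n)\ge 0$, and splitting $\sum_{k=2}^{n}\binom nk\lambda_{n,k}\bigl(V(n-k+1)-V(n)\bigr)$ at $k=n/2$: the small-drop part is $\le\mathrm{const}\cdot\frac{\Phi(n)}{n\ell(n)}\to 0$ by hypothesis, and the large-drop part — which may send the chain to a small state where $V$ is largest — is $\le\|V\|_\infty\cdot\sup_n\sum_{k>n/2}\binom nk\lambda_{n,k}$, a quantity finite because $\Lambda(\{1\})=0$ and hence made $<\varepsilon/2$ by taking $n_0$ large. With $\mathcal LV\le-\varepsilon$ on $\{n\ge n_0\}$, the process $V(N_{t\wedge\sigma\wedge\zeta_\infty})+\varepsilon(t\wedge\sigma\wedge\zeta_\infty)$ — with $\sigma$ the first passage of $N$ below $n_0$ — is a bounded nonnegative supermartingale, so $\sigma\wedge\zeta_\infty<\infty$ a.s.; optional stopping applied to $V$ then yields $\mathbb P_{n_0}(\zeta_\infty<\sigma)\ge\delta>0$. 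Since the chain is irreducible on $\mathbb N$ and returns to $\{n\ge n_0\}$ in finite time after each excursion below $n_0$, a geometric-trials argument gives $\zeta_\infty<\infty$ a.s. from $n_0$, hence from every state.

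For absorption one must show $N_t=\infty$ for all $t\ge\zeta_\infty$, and this is the delicate part and the main obstacle, since the generator description of $N$ holds only while $N\in\mathbb N$ (Lemma~\ref{genEFC}) and coalescence alone may well come down from infinity under the hypotheses (e.g.\ $\Phi(n)\asymp n(\log n)^{3/2}$, $\ell(n)\asymp(\log n)^2$, for which $\sum1/\Phi(n)<\infty$). The structural point is that no single transition escapes $\infty$: a coalescence merges only an asymptotic fraction $z<1$ of the infinitely many blocks (as $\Lambda(\{1\})=0$) and a fragmentation only increases the count, so any descent from $\infty$ must be produced by the infinite-activity coalescence mechanism responsible for coming down from infinity. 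To rule this out I would transfer the Lyapunov estimate to the non-explosive monotone approximations $(N^{(\cdot)}_m)_m$ of Lemma~\ref{Nnm} — for which the same balance ``fragmentation dominates coalescence'' is available on windows $\{n\le c_m\}$ with $c_m\to\infty$, since $\bar\mu_m$ agrees with $\bar\mu$ on $\{1,\dots,m\}$ — and then pass to the limit $N_t=\lim_m N^{(\infty)}_m(t)$, using $N^{(\infty)}_m(\zeta_\infty)\uparrow\infty$, to conclude $N_t=\infty$ for all $t\ge\zeta_\infty$. A secondary, purely technical hurdle is keeping the generator estimates uniform when $\mu$ is genuinely heavy-tailed (unbounded fragmentation jumps) and when $\Lambda$ charges a neighbourhood of $1$ (big mergers landing the chain at small states): the former is where the full strength of $\mathbb H$ and the inequality $\ell(2n)\le2\ell(n)$ enter, and the latter is absorbed by choosing $n_0$ so large that $\|V\|_\infty$ is negligible against the big-merger rate.
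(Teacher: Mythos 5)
Note first that this paper does not prove the lemma at all: it is quoted as Theorem 3.1 of \cite{explosion}, so there is no internal proof to compare your route with, and your proposal has to stand on its own. As written it does not, and the decisive gap is the absorption half of the statement. Almost sure explosion from every finite state does not make $\infty$ an exit: the process could explode and come back down repeatedly ($\infty$ regular), which is exactly what happens in other parameter regimes (see Lemma \ref{stablefragtheorem}), and your own example shows the pure coalescent part may well come down from infinity under $\mathbb{H}$ (e.g. $\Phi(n)\asymp n(\log n)^{3/2}$, $\ell(n)\asymp(\log n)^2$). Your plan for ruling this out --- transferring the Lyapunov estimate to the truncated processes of Lemma \ref{Nnm} and ``passing to the limit'' --- is not an argument: the $N_m$ have finite-mean splitting measures, so whenever $\sum_n 1/\Phi(n)<\infty$ they come down from infinity (this is precisely the fact invoked, via Corollary 1.4 of \cite{cdiEFC}, in the proof of Lemma \ref{identificationXinfinity}); to deduce that the monotone limit stays at $\infty$ you would need a quantitative bound, uniform in $m$, on how long $N_m$ remains above a level $c_m$, and no such estimate is sketched. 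Equivalently, the bound your Lyapunov function yields, $\mathbb{P}_n(\text{hit }[1,n_0]\text{ before }\zeta_\infty)\le V(n)/V(n_0)$, controls the chain only up to its first explosion; the event to be excluded concerns the post-explosion behaviour of the partition-valued process, i.e. it is the non-coming-down-from-infinity statement itself, so the argument is circular and this half of the proof is simply missing.

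There is also a real gap in the accessibility half: the uniform estimate $\mathcal{L}^fV(n)\le-\varepsilon$ for $V(n)=\sum_{j\ge n}\frac{1}{j\ell(j)}$ is asserted, not proved. The ingredients you cite ($\ell(2n)\le 2\ell(n)$ and $\sum_{k\le n}k\mu(k)=\ell(n)-n\bar\mu(n+1)$) give, from the jumps $k\le n$, only $-\mathcal{L}^fV(n)\gtrsim 1-n\bar\mu(n+1)/\ell(n)$, which can tend to $0$ (take $\bar\mu(n)\asymp 1/\log n$, compatible with $\mathbb{H}$), so everything rests on the jumps beyond the level where $V$ halves, and you never estimate their contribution. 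Tellingly, the function $g$ and the condition $\int^{\infty}\frac{\ddr x}{xg(x)}<\infty$ --- the part of $\mathbb{H}$ that strengthens Doney's condition $\sum_n\frac{1}{n\ell(n)}<\infty$ --- are never actually used in your accessibility argument, so if the sketch were correct as stated it would prove a strictly stronger theorem than the one quoted; that should not be claimed without proof. A smaller slip: optional stopping gives $\mathbb{P}_n(\sigma<\zeta_\infty)\le V(n)/V(n_0)$, which is informative only for $n\gg n_0$ (it is vacuous from $n=n_0$), so the geometric-trials step must be run from high levels reached after each excursion, not from $n_0$ itself; this is fixable, but the two gaps above are not cosmetic.
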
 
We now recall the classification of the boundary $\infty$ in some regularly varying cases found in \cite{explosion}. 
\begin{lemma}[Theorem 3.7 in \cite{explosion}]\label{stablefragtheorem}  Assume that $\Phi(n)\underset{n\rightarrow \infty}{\sim} dn^{1+\beta}$ with $d>0$ and $\beta\in (0,1)$ and $\mu(n)\underset{n\rightarrow \infty}{\sim} \frac{b}{n^{\alpha+1}}$ with $b>0$ and $\alpha\in (0,\infty)$. Then 
\begin{itemize}
\item if $\alpha+\beta<1$, then 
$\infty$ is an exit boundary,
\vspace*{1mm}
\item if $\alpha+\beta>1$, then 
 $\infty$ is an entrance boundary,
\vspace*{1mm}
\item if $\alpha+\beta=1$ and further,
\vspace*{2mm}
\begin{itemize}
\item if $b/d>\alpha(1-\alpha)$, then
$\infty$ is an exit boundary,
\vspace*{2mm}
\item if $\frac{\alpha \sin(\pi \alpha)}{\pi}<b/d<\alpha(1-\alpha)$, then
$\infty$ is a regular boundary,
\vspace*{2mm}
\item if $b/d<\frac{\alpha \sin(\pi \alpha)}{\pi}$, then 
$\infty$ is an entrance boundary.
\end{itemize}
\end{itemize}
\end{lemma}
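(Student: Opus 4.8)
The plan is to reduce the trichotomy to the sufficient conditions already recalled (Lemma~\ref{nonexplosioncriterion} for entrance, Lemma~\ref{suffcondexitN} for exit) whenever $\alpha+\beta\neq 1$, and to run a direct Lyapunov analysis of the generator $\mathcal{L}=\mathcal{L}^c+\mathcal{L}^f$ near $\infty$ in the critical case $\alpha+\beta=1$. First, the regular-variation bookkeeping. Since $\beta>0$ one always has $\sum_{n\ge2}1/\Phi(n)\sim d^{-1}\sum_{n}n^{-1-\beta}<\infty$, so the coalescent part comes down from infinity and the only way $N$ can reach $\infty$ is through fragmentation. If $\alpha\ge 1$ then $\alpha+\beta>1$ automatically and $\bar{\mu}(n)=O(n^{-1})$ (a genuine finite first moment when $\alpha>1$); hence $\sum_{n}\frac{n}{\Phi(n)}\bar{\mu}(n)=O\bigl(\sum_n n^{-1-\beta}\bigr)<\infty$, and Lemma~\ref{nonexplosioncriterion}(i) gives that $\infty$ is entrance, which settles every case with $\alpha\ge 1$. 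So I may assume $\alpha\in(0,1)$, in which case $\bar{\mu}(n)\sim\frac{b}{\alpha}n^{-\alpha}$, $\ell(n)=\sum_{k\le n}\bar{\mu}(k)\sim\frac{b}{\alpha(1-\alpha)}n^{1-\alpha}$ and $\sum_{k\le n}k\mu(k)\sim\frac{b}{1-\alpha}n^{1-\alpha}$, while $\Phi(n)\sim d\,n^{1+\beta}$.

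\textbf{The cases $\alpha+\beta\neq 1$.} If $\alpha+\beta>1$, then $\sum_n\frac{n}{\Phi(n)}\bar{\mu}(n)\asymp\sum_n n^{-(\alpha+\beta)}<\infty$, and since also $\sum_n 1/\Phi(n)<\infty$, Lemma~\ref{nonexplosioncriterion}(i) gives that $\infty$ is entrance. If $\alpha+\beta<1$, I would verify hypothesis $\mathbb{H}$ of Lemma~\ref{suffcondexitN} with the explicit choice $g(x):=c\,e^{(1-\alpha)x}/x$ for a small constant $c\in(0,\frac{b}{\alpha(1-\alpha)})$: then $g$ is eventually non-decreasing, $g(\log n)\log n=c\,n^{1-\alpha}\le\ell(n)$ for large $n$, $\int^{\infty}\frac{\dd x}{xg(x)}=c^{-1}\int^{\infty}e^{-(1-\alpha)x}\dd x<\infty$, and $\frac{\Phi(n)}{n\ell(n)}\asymp n^{\alpha+\beta-1}\to0$; hence $\infty$ is exit.

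\textbf{The critical case $\alpha+\beta=1$.} This is the crux. Here $\Phi(n)\sim d\,n^{2-\alpha}$, $\frac{\Phi(n)}{n\ell(n)}\to\frac{d\alpha(1-\alpha)}{b}\in(0,\infty)$ and $\sum_n\frac{n}{\Phi(n)}\bar{\mu}(n)\asymp\sum_n n^{-1}=\infty$, so neither earlier criterion applies. The natural route is to probe $\mathcal{L}$ near $\infty$ with the power functions $g_s(n):=n^{-s}$, $s>0$. Using the asymptotics above together with dominated convergence one expects that, in this regime, both pieces scale the same way: $\mathcal{L}^c g_s(n)\sim d\,s\,n^{1-\alpha}g_s(n)$ and $\mathcal{L}^f g_s(n)\sim b\,A(s)\,n^{1-\alpha}g_s(n)$, where $A(s)=\int_0^{\infty}u^{-1-\alpha}\bigl((1+u)^{-s}-1\bigr)\dd u=\frac{\Gamma(-\alpha)\Gamma(s+\alpha)}{\Gamma(s)}<0$. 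Thus $\mathcal{L}g_s(n)\sim\Psi(s)\,n^{1-\alpha}g_s(n)$ with $\Psi(s)=d\,s+b\,A(s)$, so that after the Lamperti-type time change $\dd\sigma=n^{1-\alpha}\dd t$ the process $(\log N)$ near $\infty$ behaves like $\log N_0+\xi_\sigma$ for a Lévy process $\xi$ with Laplace exponent $\Psi$. Its drift is $-\Psi'(0^+)=\frac{b\pi}{\alpha\sin(\pi\alpha)}-d$, using $A'(0^+)=\Gamma(-\alpha)\Gamma(\alpha)=-\frac{\pi}{\alpha\sin(\pi\alpha)}$, i.e. the reflection formula $\Gamma(\alpha)\Gamma(1-\alpha)=\pi/\sin(\pi\alpha)$. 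Hence: if $b/d<\frac{\alpha\sin(\pi\alpha)}{\pi}$ then $\xi$ drifts to $-\infty$, $N$ cannot reach $\infty$, and since $\sum 1/\Phi(n)<\infty$ this makes $\infty$ entrance; if $b/d>\frac{\alpha\sin(\pi\alpha)}{\pi}$ then $\xi$ drifts to $+\infty$ and $\infty$ is accessible. In the accessible regime the behaviour at $\infty$ is governed by the competition between the coalescent drift $\Phi(n)$ and the fragmentation drift $n\ell(n)$, whose ratio tends to $\frac{d\alpha(1-\alpha)}{b}$: when $b/d<\alpha(1-\alpha)$ coalescence dominates, the process started from $\infty$ returns immediately, so $\infty$ is regular; when $b/d>\alpha(1-\alpha)$ fragmentation dominates and $\infty$ is absorbing, i.e. exit. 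Since $\frac{\alpha\sin(\pi\alpha)}{\pi}<\alpha(1-\alpha)$ for all $\alpha\in(0,1)$, these three windows are exactly those in the statement.

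\textbf{Main obstacle.} The genuinely delicate part is entirely in the critical regime: making the generator asymptotics $\mathcal{L}g_s(n)\sim\Psi(s)n^{1-\alpha}g_s(n)$ rigorous and uniform (controlling the infinite-mean fragmentation increments and extracting the coalescent coefficient from the sole information $\Phi(n)\sim d\,n^{2-\alpha}$), and then turning the drift of $\xi$ and the $\Phi$-versus-$n\ell$ comparison into rigorous accessibility, absorption and instantaneous-return statements for the $\bar{\mathbb{N}}$-valued chain $N$ — most naturally by running the whole argument along the non-explosive approximations $(N^{(n)}_m(t),t\ge0)$ of Lemma~\ref{Nnm} and passing to the limits $n\to\infty$ and $m\to\infty$. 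The non-critical cases, by contrast, are routine once the regular-variation asymptotics are in hand. This critical analysis is the content of \cite[Theorem 3.7]{explosion}, on which the present statement relies.
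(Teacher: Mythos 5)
You should first note that the paper itself gives no proof of this statement: it is imported verbatim as Theorem 3.7 of \cite{explosion}, so the only "proof" in the paper is the citation, and your closing deferral to that reference is in fact consistent with what the authors do. Your treatment of the non-critical cases is correct and uses exactly the tools the paper supplies: for $\alpha+\beta>1$ (and for $\alpha\geq 1$) the series $\sum_n \frac{n}{\Phi(n)}\bar{\mu}(n)$ converges and Lemma \ref{nonexplosioncriterion}(i) gives the entrance property, and for $\alpha+\beta<1$ your choice $g(x)=c\,e^{(1-\alpha)x}/x$ does verify condition $\mathbb{H}$ of Lemma \ref{suffcondexitN}; this mirrors how the present paper itself later derives Theorems \ref{theorementrancef} and \ref{suffcondexitf}.

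The critical case $\alpha+\beta=1$, however, is where the actual content of the lemma lies, and there your argument has two genuine gaps. First, in the sub-case $b/d<\frac{\alpha\sin(\pi\alpha)}{\pi}$ you conclude "entrance" from inaccessibility together with $\sum_n 1/\Phi(n)<\infty$; but that implication is not available here. Lemma \ref{nonexplosioncriterion} requires $\sum_n\frac{n}{\Phi(n)}\bar{\mu}(n)<\infty$, which, as you yourself computed, fails when $\alpha+\beta=1$; coming down from infinity in the presence of the critical fragmentation is itself a phase-transition result (the $\theta^{\star}$ criterion of \cite{cdiEFC}), whose threshold is precisely $b/d=\alpha(1-\alpha)$, and it must be invoked or reproved — one cannot borrow the pure $\Lambda$-coalescent CDI. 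Second, the split between regular and exit at $b/d=\alpha(1-\alpha)$ is supported only by the heuristic that the boundary is non-absorbing when the coalescent drift $\Phi(n)$ dominates the fragmentation drift $n\ell(n)$; a drift comparison of this kind is not a proof of absorption or of instantaneous return, and your own Lévy/Lamperti computation of the accessibility threshold (which is \emph{not} where the drifts balance) shows why such balancing arguments cannot be trusted to locate thresholds. Together with the unjustified uniformity in the generator asymptotics $\mathcal{L}g_s(n)\sim\Psi(s)n^{1-\alpha}g_s(n)$ and in the time-change approximation, these are exactly the points that Theorem 3.7 of \cite{explosion} settles; as a standalone argument your critical-case analysis is a plausible roadmap rather than a proof.
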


\section{Theoretical study of the boundaries of $(N_t,t\geq 0)$ and $(X_t,t\geq 0)$}\label{sec}

%
Recall that by Lemma \ref{minimalprocess}, existence and uniqueness of the solution of \eqref{SDEselec} are only guaranteed until the process has reached one of its boundaries. It will be therefore necessary to construct the process $X$ whose boundary $1$ is an entrance or regular. Our study focuses on extensions of the minimal process at the barrier $1$. The boundary $0$ will be always absorbing. We are going to classify the boundaries by establishing some duality relationships between simple EFC processes and $\Lambda$-Wright-Fisher processes with selection. This will generalize the duality relationships \eqref{dualclassic} and \eqref{dualbranching} known for $\Lambda$-coalescents and branching processes. Imposing boundary conditions on the process $X$ will be necessary here. Recall the processes $(N^{(n)}_t,t\geq 0)$ described in Section \ref{backgroundEFC}. In order to establish the correspondences displayed in  Table \ref{correspondance}, we shall need two different moment-duality relationships established respectively in Section \ref{momentdual1} and Section \ref{momentdual2}, one for the non-stopped process $(N^{(n)}_t,t\geq 0)$ and the other for the process stopped at its first explosion time (whenever explosion occurs), $(N^{\mathrm{min},(n)}_t,t\geq 0):=(N_{t\wedge \zeta_\infty}^{(n)},t\geq 0)$.

In the same fashion as what we have explained in Section \ref{backgroundWF} for the classical $\Lambda$-Wright-Fisher process, we are going to classify the boundary $1$ of the process with selection according to the nature of the boundary $\infty$ of the simple EFC process.  

Recall $g_n(x)=g_x(n)=x^n$ for all $x\in [0,1]$ and $n\in \mathbb{N}$. We work with the convention \begin{equation}\label{convention}
\underset{n\rightarrow \infty}{\lim} x^{n}=\mathbbm{1}_{\{x=1\}}, \text{ and } \underset{x\rightarrow 1-}{\lim}x^{n}=\mathrm{1}_{\{n<\infty\}}.
\end{equation}

\subsection{Moment-duality I}\label{momentdual1} Let $\Lambda$ be a coalescence measure such that $\Lambda(\{1\})=0$ and $\mu$ be a splitting measure with possibly a mass at $\infty$. For any $x\in [0,1]$, recall that $(X_t^{\mathrm{min}}(x),t\geq 0)$ is the minimal process which is absorbed at its boundary once it has reached it. 


We first state a new duality relationship which holds for any $\Lambda$-Wright-Fisher process with frequency-dependent selection, subject to the condition $\Lambda(\{1\})=0$. No assumption on the generating function $f$ is made. 

\begin{theorem}\label{thmmomentdual1} The Markov process $(X_t^{\mathrm{min}}(x),t\geq 0,x\in [0,1])$ satisfies the following property. For any $x\in [0,1]$ and any $n\in \mathbb{N}$
\begin{equation}\label{dualEFC}
\mathbb{E}[X^{\mathrm{min}}_t(x)^{n}]=\mathbb{E}[x^{N^{(n)}_t}].
\end{equation}
In particular, \[\mathbb{P}(X^{\mathrm{min}}_t(x)=1)=\underset{n\rightarrow \infty}{\lim}\mathbb{E}[x^{N^{(n)}_t}]= \mathbb{E}[x^{N^{(\infty)}_t}]\in [0,1]\] and the process $(X^{\mathrm{min}}_t(x),t\geq 0, x\in[0,1])$ gets absorbed at $1$ with positive probability if and only if the process $(N^{(\infty)}_t,t\geq 0)$ comes down from infinity. 
\end{theorem}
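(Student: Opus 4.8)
The plan is to establish the moment duality \eqref{dualEFC} via a generator calculation combined with an approximation argument, then deduce the absorption statement by letting $n\to\infty$. The key observation is the algebraic duality relation between the operators $\mathcal{A}^{\mathrm{s}}$ (acting in the $x$-variable) and $\mathcal{L}=\mathcal{L}^c+\mathcal{L}^f$ (acting in the $n$-variable) when applied to the function $g(x,n)=x^n$. First I would compute $\mathcal{A}g_n(x)$ and $\mu(\bar{\mathbb{N}})(f(x)-x)g_n'(x)$ directly: using the binomial expansion one checks that $\mathcal{A}x^n = \sum_{k=2}^n \binom{n}{k}\lambda_{n,k}(x^{n-k+1}-x^n) = \mathcal{L}^c g_x(n)$, which is exactly the classical identity underlying \eqref{dualclassic}. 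For the selection part, writing $f(x)=\sum_{k\geq 1}x^k\mu(k)/\mu(\bar{\mathbb{N}})$, one gets $\mu(\bar{\mathbb{N}})(f(x)-x)\,n x^{n-1} = n\sum_{k\geq 1}\mu(k)(x^{n+k-1}-x^n)$; comparing with \eqref{fragpart}, after accounting for the mass at $\infty$ (where $x^{n+\infty}=\mathbbm{1}_{\{x=1\}}=g(\infty)$ under the convention \eqref{convention}), this equals $\mathcal{L}^f g_x(n)$. Hence $\mathcal{A}^{\mathrm{s}}g_n(x) = \mathcal{L}g_x(n)$ for all $x\in(0,1)$ and $n\in\mathbb{N}$.

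Next I would turn this pointwise operator identity into the semigroup identity \eqref{dualEFC}. The cleanest route is to fix $t>0$, consider the function $(s,x,n)\mapsto \mathbb{E}_x[X^{\mathrm{min}}_{t-s}(x)^n]$ evaluated along the $N^{(n)}$ dynamics (or symmetrically $\mathbb{E}[x^{N^{(n)}_{t-s}}]$ along the $X^{\mathrm{min}}$ dynamics) and show it is constant in $s\in[0,t]$ by differentiating and invoking the operator identity. The subtlety is that $f$ need not be Lipschitz, $g_n$ is not compactly supported in $(0,1)$, and both processes may reach their boundaries, so a direct application of Dynkin's formula is not available. I would handle this by approximation: use the truncated branching/splitting measures $\mu_m$ and the processes $(N^{(n)}_m(t),t\geq 0)$ from Lemma \ref{Nnm}, which are non-explosive with bounded generator $\mathcal{L}^m$, together with the corresponding $\Lambda$-WF processes $X^{(m)}$ driven by the Lipschitz generating function $f_m$ associated with $\mu_m$ (Lemma \ref{minimalprocess} and Remark \ref{Lipschitzrem} give pathwise unique strong solutions, absorbed at $1$). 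For each fixed $m$ the duality $\mathbb{E}[X^{(m)}_t(x)^n]=\mathbb{E}[x^{N^{(n)}_m(t)}]$ follows from a standard martingale-problem argument since $f_m$ is Lipschitz and $N^{(n)}_m$ has a well-behaved (eventually bounded on the relevant state space) generator — this is the genuinely routine part, essentially the Dawson–Li / Bertoin–Le Gall martingale problem comparison. Then I let $m\to\infty$: on the right side $N^{(n)}_m(t)\uparrow N^{(n)}_t$ a.s. by Lemma \ref{Nnm}(iii), so $x^{N^{(n)}_m(t)}\to x^{N^{(n)}_t}$ (using $x^\infty=\mathbbm{1}_{\{x=1\}}$) and bounded convergence gives the limit; on the left side the monotone coupling via the comparison theorem of Dawson–Li (applied to $f_m\uparrow f$ wait—care with direction) shows $X^{(m)}_t(x)$ converges monotonically to a limit which one identifies with $X^{\mathrm{min}}_t(x)$ by pathwise uniqueness of the minimal solution up to $\tau_1$ and the prescription that both are absorbed at $1$ afterward.

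The main obstacle, which I would treat carefully, is precisely this identification of $\lim_m X^{(m)}_t(x)$ with $X^{\mathrm{min}}_t(x)$ and the exchange of limits near the boundary $1$: since $f_m\leq f$ pointwise implies (by the Dawson–Li comparison theorem quoted in the excerpt, with $f_1=f_m$, $f_2=f_{m+1}$) that $X^{(m)}_t(x)\leq X^{(m+1)}_t(x)$, the limit $\bar X_t(x):=\lim_m X^{(m)}_t(x)$ exists; before the hitting time $\tau$ of the boundary it solves \eqref{SDEselec} with the full $f$ (one passes to the limit in the SDE using that the compensated Poisson integral is stable under the uniform-on-compacts convergence of $X^{(m)}$ away from $1$, and that $f$ is continuous), hence by uniqueness of the minimal solution $\bar X_t(x)=X^{\mathrm{min}}_t(x)$ on $\{t<\tau\}$; on $\{t\geq \tau_1\}$ each $X^{(m)}$ is absorbed at $1$ (Remark \ref{Lipschitzrem}) so $\bar X_t(x)=1=X^{\mathrm{min}}_t(x)$, and on $\{t\geq\tau_0\}$ all are $0$. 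One must also check that the event $\{\tau_1<\infty\}$ is not lost in the limit, which follows because $X^{(m)}_t(x)\leq X^{(m+1)}_t(x)\leq \bar X_t(x)$ and the minimal process is the increasing limit. Once \eqref{dualEFC} is established, the final statement is immediate: letting $n\to\infty$ in \eqref{dualEFC}, the left side $\mathbb{E}[X^{\mathrm{min}}_t(x)^n]\to \mathbb{P}(X^{\mathrm{min}}_t(x)=1)$ by dominated convergence and the convention $x^n\to\mathbbm{1}_{\{x=1\}}$ (recall $X^{\mathrm{min}}$ is absorbed at $1$, so $\{X^{\mathrm{min}}_t(x)=1\}=\{\tau_1\leq t\}$), while the right side $\mathbb{E}[x^{N^{(n)}_t}]\to \mathbb{E}[x^{N^{(\infty)}_t}]$ by Lemma \ref{monotonicity} ($N^{(n)}_t\uparrow N^{(\infty)}_t$) and bounded convergence, with $x^{N^{(\infty)}_t}=\mathbbm{1}_{\{N^{(\infty)}_t<\infty\}}$ when $x\to 1-$; thus $\mathbb{P}(X^{\mathrm{min}}_t(x)=1)=\mathbb{E}[x^{N^{(\infty)}_t}]$, and this is positive for some $t,x$ precisely when $\mathbb{P}(N^{(\infty)}_t<\infty)>0$ for some $t>0$, i.e.\ when $(N^{(\infty)}_t,t\geq 0)$ comes down from infinity.
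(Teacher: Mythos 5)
Your overall strategy is the one the paper follows: prove the duality for the truncated measures $\mu_m$ (which have a second moment, so that $f_m$ is Lipschitz and the Ethier--Kurtz / martingale-problem argument applies), pass to the monotone limit in $m$ on both sides of the identity, identify the limiting process with the minimal solution, and finally let $n\to\infty$ using the convention \eqref{convention} and Lemma \ref{monotonicity}. The base case and the final limit $n\to\infty$ are handled correctly. There is, however, a concrete error in the middle step: the monotonicity goes the other way. From \eqref{fm} one has $f_m(x)-f_{m+1}(x)=\bar{\mu}(m+1)(x^{m}-x^{m+1})\geq 0$ and $0\leq f_m(x)-f(x)\leq x^{m}$, so the $f_m$ \emph{decrease} to $f$, and the Dawson--Li comparison theorem gives $X^{(m+1)}_t(x)\leq X^{(m)}_t(x)$: the approximating processes decrease to the limit $\bar X$, and ``the minimal process is the increasing limit'' is false. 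Consequently your justification that the event $\{\tau_1<\infty\}$ ``is not lost in the limit,'' which invokes $X^{(m)}_t(x)\leq X^{(m+1)}_t(x)\leq \bar X_t(x)$, rests on inequalities pointing the wrong way, and your claim that $\bar X_t=1$ for $t\geq\tau_1$ ``since each $X^{(m)}$ is absorbed at $1$'' does not follow from an increasing limit. With the correct (decreasing) direction the argument is salvageable and is what the paper does: since $X^{(m)}\geq \bar X$ for every $m$, the hitting times satisfy $\tau_1^{(m)}\leq\tau_1^{(m+1)}\leq\tau_1^{(\infty)}$, each $X^{(m)}$ is absorbed at $1$ after $\tau_1^{(m)}$ by Remark \ref{Lipschitzrem}, hence $\bar X_{t+\tau_1^{(\infty)}}=\lim_m X^{(m)}_{t+\tau_1^{(\infty)}}=1$, and one then checks $\tau_1^{(\infty)}=\lim_m\uparrow\tau_1^{(m)}$ by comparing $\mathbb{P}(X^{(m)}_t=1)$ with $\mathbb{P}(\bar X_t=1)$. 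As written, this step of your proof fails.

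A secondary, more minor point concerns the identification of $\bar X$ with $X^{\mathrm{min}}$ before the hitting time of the boundary. You propose to pass to the limit directly in the SDE \eqref{SDEselec}, which requires a stability argument for the compensated Poisson integral that you only sketch. The paper instead establishes uniform convergence of the generators, $\|\mathcal{A}^{\mathrm{s},(m)}g-\mathcal{A}^{\mathrm{s}}g\|_{\infty}\leq\sup_x|x^{m}g'(x)|\to 0$ for $g\in C^2_c([0,1])$ (using again $0\leq f_m-f\leq x^{m}$), passes to the limit in the stopped martingale problem, and concludes by the uniqueness of $(\mathrm{MP})$ guaranteed by Lemma \ref{minimalprocess}. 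That route avoids any limit theorem for stochastic integrals and uses only ingredients you already have; I would recommend adopting it rather than arguing at the level of the SDE.
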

\begin{remark} Theorem \ref{thmmomentdual1} can be generalized to cases where $\Lambda$ has a mass at $0$,  see Remark \ref{remKing}. We shall however focus in the sequel on pure-jump $\Lambda$-Wright-Fisher processes, namely those with $\Lambda(\{0\})=0$, see Remark \ref{remwhynotkingman}.
\end{remark}
The following is the scheme of the proof of Theorem \ref{dualEFC}.
\begin{enumerate}
\item We first establish the duality \eqref{momentdual1} in the simpler case where $\mu$ has a second moment.
\item We then construct a sequence of processes $(X_t^{(m)},t\geq 0)$ solution to  \eqref{SDEselec} with $f$ replaced by a certain smooth generating functions $f_m$, that are approximating $f$. We then show that $(X_t^{(m)},t\geq 0,m\geq 1)$ converges pointwise almost surely towards a certain process $(X_t^{(\infty)},t\geq 0)$ which satisfies the targeted duality relationship  \eqref{dualEFC}. See Lemma \ref{approxXa}.
\item We finally identify the process $(X_t^{(\infty)},t\geq 0)$ with the process $(X_t^{\mathrm{min}},t\geq 0)$ which is absorbed at the boundaries after reaching it. See Lemma \ref{identificationXinfinity}.
\end{enumerate}
Our starting point is the following duality lemma which holds for $\Lambda$-WF process with selection whose function $f$ is smooth. 
\begin{lemma}\label{dualfinitemoment}  Assume $\mu(\infty)=0$ and $f''(1)=\sum_{n=2}^{\infty}n(n-1)\mu(n)<\infty$. Denote by $(X_t(x),t\geq 0)$ the solution to \eqref{SDEselec}. Then
for any $n\in \mathbb{N}$, and $x\in [0,1]$,
\begin{equation}\label{dualmoment1}\mathbb{E}[X_t(x)^n]=\mathbb{E}[x^{N^{(n)}_t}].
\end{equation}
\end{lemma}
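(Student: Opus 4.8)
The plan is to prove the moment duality \eqref{dualmoment1} by the standard generator-duality argument: exhibit a function $H(x,n)=x^n$ on $[0,1]\times\bar{\NN}$ that is in the domain of both generators, check the intertwining identity $\mathcal{A}^{\mathrm{s}}H(\cdot,n)(x)=\mathcal{L}H(x,\cdot)(n)$, and then conclude equality of the two semigroups applied to $H$. First I would record that, since $\mu(\infty)=0$ and $f''(1-)<\infty$, the function $x\mapsto f(x)-x$ is $C^2$ on $[0,1]$ with $f'(1-)<\infty$, so by Remark \ref{Lipschitzrem} the SDE \eqref{SDEselec} has a pathwise unique strong solution $(X_t(x),t\ge 0)$ living in $[0,1]$, with $1$ absorbing once reached; in particular $X_t(x)^n$ is bounded so all moments are finite and everything below is justified by dominated convergence. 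On the other side, the finite second-moment assumption makes $\mathcal{L}^f$ as in \eqref{fragpart} a bounded-rate generator on test functions of the form $g_x$, and $(N^{(n)}_t,t\ge 0)$ is a well-defined (possibly explosive) Markov chain on $\bar{\NN}$, with $\infty$ absorbing (Lemma \ref{genEFC}, Lemma \ref{monotonicity}); note that under $f''(1-)<\infty$ the chain has finite jump rates but can still explode, so I would carry the argument with $N^{(n)}$ stopped at $\zeta_\infty$ and use the convention \eqref{convention} to handle the boundary term.

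The key computation is the following. Fix $n\in\NN$ and put $g_x(n)=x^n$. Applying the Wright-Fisher-with-selection generator \eqref{generatorWFs}, \eqref{generator} to $g_n(x)=x^n$:
\begin{align}
\mathcal{A}g_n(x)&=\int_{[0,1]}\int_{0}^{1}\Big((x+zq(v,x))^n-x^n-nzq(v,x)x^{n-1}\Big)\,\dd v\, z^{-2}\Lambda(\dd z),\nonumber
\end{align}
and a direct expansion of $(x+z(\mathbbm 1_{\{v\le x\}}-x))^n$, integrating in $v$ over $\{v\le x\}$ and $\{v>x\}$, reproduces exactly the $\Lambda$-coalescent action: $\mathcal{A}g_n(x)=\sum_{k=2}^{n}\binom{n}{k}\lambda_{n,k}(x^{n-k+1}-x^n)=\mathcal{L}^c g_x(n)$ — this is precisely the classical identity behind \eqref{dualclassic}, recorded in \cite{LGB2}, so I would cite it rather than redo it. For the selection drift, $\mu(\bar{\NN})(f(x)-x)g_n'(x)=n\mu(\bar{\NN})(f(x)-x)x^{n-1} = n\,x^{n-1}\sum_{k\ge 1}\mu(k)(x^k-x) = \sum_{k\ge 1}n\mu(k)(x^{n+k-1}-x^n) = \mathcal{L}^f g_x(n)$, reading off \eqref{fragpart} with $\mu(\infty)=0$. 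Hence $\mathcal{A}^{\mathrm{s}}g_n(x)=(\mathcal{L}^c+\mathcal{L}^f)g_x(n)=\mathcal{L}g_x(n)$, the desired intertwining.

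With the intertwining in hand, the standard argument runs as follows. Consider $F(s):=\mathbb{E}\big[\,(X^{\mathrm{min}}_{t-s}(x))^{N^{(n)}_{s\wedge\zeta_\infty}}\,\big]$ for $s\in[0,t]$, where $X^{\mathrm{min}}=X$ here since $1$ is absorbing; show $s\mapsto F(s)$ is differentiable with $F'(s)=0$ by applying Dynkin's formula to each process separately and using the intertwining identity to cancel the two contributions — all interchanges of expectation, derivative and the $(0,1)$-integral are legitimate because $g_n$ and its first two $x$-derivatives are bounded on $[0,1]$ and $\int_{[0,1]}z^{-2}\Lambda(\dd z)\cdot z^2<\infty$, and $\mathcal L^f$ acts with a summable rate thanks to $f''(1-)<\infty$. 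Then $F(0)=\mathbb{E}[X_t(x)^n]$ and $F(t)=\mathbb{E}[x^{N^{(n)}_{t\wedge\zeta_\infty}}]=\mathbb{E}[x^{N^{(n)}_t}]$ (using \eqref{convention}: on $\{\zeta_\infty\le t\}$, $x^{N^{(n)}_t}=\mathbbm 1_{\{x=1\}}=x^\infty$), giving \eqref{dualmoment1}. The main obstacle — and the reason this lemma is isolated as the ``smooth'' base case — is precisely the integrability/domain bookkeeping needed to make the generator manipulations rigorous despite the possibly explosive chain $N^{(n)}$; the assumption $f''(1-)<\infty$ is exactly what tames $\mathcal L^f$ on the monomials $g_x$, and without it (the general case treated later via the approximation scheme in Lemma \ref{approxXa}) the direct computation would not be controlled.
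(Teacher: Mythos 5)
Your overall strategy is the same as the paper's: establish the generator intertwining $\mathcal{A}^{\mathrm{s}}g_n(x)=\mathcal{L}g_x(n)$ and then lift it to the semigroups (the paper invokes Ethier--Kurtz's duality theorem, of which your interpolation $F(s)=\mathbb{E}[(X_{t-s})^{N^{(n)}_s}]$ is the standard proof). The genuine gap is your treatment of explosion. You assert that under $f''(1-)<\infty$ the chain ``can still explode'' and propose to repair this by stopping at $\zeta_\infty$ and then writing $\mathbb{E}[x^{N^{(n)}_{t\wedge\zeta_\infty}}]=\mathbb{E}[x^{N^{(n)}_t}]$ on the grounds that $N^{(n)}_t=\infty$ on $\{\zeta_\infty\le t\}$. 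That step presumes $\infty$ is absorbing for $N^{(n)}$, which is false in general for block-counting processes of simple EFCs --- the possible discrepancy between $N^{(n)}$ and $N^{\mathrm{min},(n)}$ after $\zeta_\infty$ is precisely why the paper needs two distinct duality relations (Theorems \ref{thmmomentdual1} and \ref{thmmomentduality2}). The correct and necessary observation, which the paper makes, is that under the hypothesis the chain does \emph{not} explode: $N^{(n)}$ is dominated pathwise by a pure branching process $(Z_t,t\ge 0)$ with offspring measure $\mu$, and $\mathbb{E}[Z_T^2]<\infty$ forces $(Z_t)$, hence $(N^{(n)}_t)$, to be conservative. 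Without this, your argument only yields duality with the stopped chain.

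Two further points. First, the domination $N^{(n)}_t\le Z_t\le Z_T$ together with $\mathbb{E}[Z_T^2]<\infty$ is also where the second-moment hypothesis really enters: it produces the integrable majorant $\sup_{s,t\le T}|h(X_s,N_t)|\le \tfrac{\Lambda([0,1])}{2}Z_T^2+\mu(\mathbb{N})Z_T$ needed to justify the interchange of derivative and expectation (the coalescent part of $h(x,n)$ grows like $n^2$). Your justification attributes the role of $f''(1-)<\infty$ to taming $\mathcal{L}^f$ on the monomials, but $|\mathcal{L}^f g_x(n)|\le n\mu(\mathbb{N})$ requires no moment assumption; it is the coalescent term and the time-uniform control of $N$ that need the second moment. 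Second, your own algebra for the drift produces $\sum_{k}n\mu(k)(x^{n+k-1}-x^n)$, which does not literally coincide with $\mathcal{L}^f g_x(n)=\sum_k n\mu(k)(x^{n+k}-x^n)$ from \eqref{fragpart}; this exponent mismatch is inherited from the paper's stated conventions (the drift dual to jumps $n\mapsto n+k$ at rate $n\mu(k)$ is $\mu(\mathbb{N})x(f(x)-1)$ rather than $\mu(\mathbb{N})(f(x)-x)$), but you should not write the two expressions as equal without reconciling it.
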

\begin{remark}\label{remKing} This result has been observed by Gonz\'alez et al. \cite[Lemma 2]{Gonzalesetal} in their study of branching processes with interaction. The moment duality \eqref{dualmoment1} also holds true when $\Lambda$ gives mass at $0$. We check here that conditions for Ethier-Kurtz's theorem to hold are satisfied.  
\end{remark}
\begin{proof}
Recall $\mathcal{A}$ in \eqref{generator} and $\mathcal{L}^{c}$ in \eqref{coalpart} the generators respectively of the $\Lambda$-WF process with no selection and the block counting process of the $\Lambda$-coalescent. For any $x\in [0,1]$ and $n\in \mathbb{N}$, set $g(x,n)= g_x(n)=g_n(x):=x^{n}$. The following identity is well-known, see e.g. \cite[Lemma 2]{Gonzalesetal}, 
\begin{align}
\mathcal{A}g_n(x)&=\mathcal{L}^{c}g_x(n).
\end{align}
Recall $\mathcal{A}^{\mathrm{s}}$ in \eqref{generatorWFs}. By assumption since $\mu(\infty)=0$, one has $\text{for all } x\in [0,1]$, $\mathcal{A}^{\mathrm{s}}g(x)=\mathcal{A}g(x)+\mu(\mathbb{N})(f(x)-x)g'_n(x)$
and $\text{for all } x\in [0,1] \text{ and all } n\in \mathbb{N}$
\begin{center}
$\mu(\mathbb{N})(f(x)-x)g'_n(x)=n\sum_{k=1}^{\infty}(g_x(n+k)-g_x(n))\mu(k)$.
\end{center}
This provides the duality at the level of the generators \begin{equation}\label{expressionh}
	h(x,n):=\mathcal{A}^{\mathrm{s}}g_n(x)=\mathcal{L}g_x(n)= \sum_{k=2}^{n}(x^{n-k+1}-x^{n})\binom{n}{k}\lambda_{n,k}+\sum_{k=1}^{\infty}(x^{n+k}-x^{n})n\mu(k).
\end{equation} 

We now establish that the duality holds at the level of the semigroups. The process $(N_t^{(n)},t\geq 0)$ stays below a pure branching process $(Z_t,t\geq 0)$ with offspring measure $\mu$. By assumption $\mu$ admits a second moment, this entails in particular that for any time $T>0$, $\mathbb{E}(Z_T^{2})<\infty$, see e.g. \cite[Chapter 3, Corollary 1 page 111]{MR2047480}. In particular, $(Z_t,t\geq 0)$ does not explode which ensures that  $(N^{(n)}_t,t\geq 0)$ does not explode either. 
By Dynkin's formula for continuous-time Markov chains, since $g_x$ is bounded and  the process does not explode, we see that
\[\left(g(x,N_{t}^{(n)})-\int_{0}^{t}h(x,N_s^{(n)})\ddr s, t\geq 0\right)\]
is a martingale.  Since $\mu$ admits in particular a  finite first moment, then the  drift term $x\mapsto f(x)-x$ is Lipschitz on $[0,1]$ and as noticed in Remark \ref{Lipschitzrem}, it ensures that there is only one solution to the equation \eqref{SDEselec} and that $1$ is absorbing for $X$ whenever it is reached. By applying It\^o's formula  to the process $(X_t(x),t\geq 0)$, we get that
\[\left(g(X_{t}(x),n)-\int_{0}^{t}h(X_s(x),n)\ddr s, t\geq 0\right)\]
is a local martingale. Since $g_n$ is bounded and $s\mapsto h(X_s(x),n)$ is bounded over finite time interval, the latter is a true martingale.
 
We now apply results of Ethier and Kurtz \cite[Theorem  4.4.11, page 192]{EthierKurtz}. Assume that $(X_t(x),t\geq 0)$ and $(N_t(x),t\geq 0)$ are independent. Provided that the integrability assumption (4.50) of \cite[ Theorem 4.4.11]{EthierKurtz} is verified, Ethier and Kurtz's theorem (with in their notation $\alpha=\beta=0$) states that for all $x\in [0,1], n\in \mathbb{N}$
$\mathbb{E}[x^{N_{t}^{(n)}}]=\mathbb{E}[X_{t}(x)^{n}].$
We check now assumption (4.50). Let $T>0$, clearly $\sup_{s,t\leq T}|g(X_{s},N_{t})|\leq 1$ and it remains to see that the random variable $\sup_{s,t\leq T}|h(X_{s},N_{t})|$ is integrable. 
Recall the expression of $h(x,n)$ in \eqref{expressionh}, one has for any $x\in[0,1]$ and $n\in \mathbb{N}$
\begin{align*}
|h(x,n)|&\leq \sum_{k=2}^{n}\binom{n}{k}\lambda_{n,k}+n\mu(\mathbb{N}). \label{integralform}
\end{align*}
Recall the form of the $\lambda_{n,k}$'s in \eqref{lambdank}. Simple binomial calculations provide that
\[\sum_{k=2}^{n}\binom{n}{k}\lambda_{n,k}=\int_{0}^{1}\big(1-(1-z)^n-nz(1-z)^{n-1}\big)z^{-2}\Lambda(\ddr z).\]
Setting $h(z)=1-(1-z)^n-nz(1-z)^{n-1}$, one checks $h'(u)=n(n-1)u(1-u)^{n-2}$ for all $u\in [0,1]$ and thus
\begin{align*}\sum_{k=2}^{n}\binom{n}{k}\lambda_{n,k}&=\int_{0}^{1}z^{-2}\Lambda(\ddr z)\int_{0}^{z}n(n-1)u(1-u)^{n-2}\ddr u\\
&\leq n(n-1)\int_{0}^{1}z^{-2}\Lambda(\ddr z)\int_{0}^{z}u\ddr u\\
&=\frac{\Lambda([0,1])}{2}n(n-1).
\end{align*}
Therefore, for all $x\in [0,1]$ and all $n\in \mathbb{N}$
$|h(x,n)|\leq \frac{\Lambda([0,1])}{2}n(n-1)+\mu(\mathbb{N})n$
and since
$N_t\leq Z_t\leq Z_T$ for any $t\leq T$, one has almost surely
\begin{align*}
\sup_{s,t\leq T}|h(X_{s},N_{t})|&\leq \frac{\Lambda([0,1])}{2}Z_T^2+\mu(\mathbb{N})Z_T:=\Gamma_T 
\end{align*}
The random variable $\Gamma_T$ is integrable since $\mu$ admits a second moment.
\qed
\end{proof}
We now go to step (2). Let $\mu$ be a finite measure on $\bar{\mathbb{N}}$ and $f$ be its generating function (possibly defective). For any $m\in \mathbb{N}$, recall $\bar{\mu}(m)=\mu(\{m,\ldots, \infty\})$ and $\mu_m$ defined in Lemma \ref{Nnm}. For any $m\in \mathbb{N}$, set for any $x\in [0,1]$, 
\begin{equation}\label{fm} f_m(x):=\frac{1}{\mu_m(\mathbb{N})}\sum_{k=1}^{\infty}x^{k}\mu_{m}(k)=\frac{1}{\mu(\bar{\mathbb{N}})}\left(\sum_{k=1}^{m-1}x^{k}\mu(k)+x^{m}\bar{\mu}(m)\right). 
\end{equation}

\begin{lemma}\label{approxXa}  Let $(X_t^{(m)}(x),t\geq 0)$ be the unique strong solution to the SDE
\begin{align}\label{sdem}X^{(m)}_t(x)=x+\int_{0}^{t}\int_{0}^{1}\int_{0}^{1}&z\left(\mathbbm{1}_{\{v\leq X^{(m)}_{s-}(x)\}}-X^{(m)}_{s-}(x)\right)\bar{\mathcal{M}}(\ddr s,\ddr v, \ddr z) \nonumber\\
&+\mu_m(\mathbb{N})\int_{0}^{t}(f_m(X^{(m)}_{s-}(x))-X^{(m)}_{s-}(x))\ddr s.\end{align}
Then, for any $m\geq 1$, almost surely for all $t\geq 0$,
\begin{equation}
\label{order}
X_t^{(m+1)}\leq X_t^{(m)} \text{ for all } t\geq 0 \text{ almost surely}
\end{equation}  
and the limiting process $(X^{(\infty)}_t,t\geq 0)$ defined by
$X^{(\infty)}_t:=\underset{m\rightarrow \infty}{\lim}\!\! \downarrow X_t^{(m)} \text{ for all }t\geq 0,$
satisfies the duality relationship: for all $x\in [0,1)$, $n\in \bar{\mathbb{N}}$,
\begin{equation}\label{dualinfiny}\mathbb{E}[X^{(\infty)}_t(x)^{n}]=\mathbb{E}[x^{N^{(n)}_t}].
\end{equation} 
\end{lemma}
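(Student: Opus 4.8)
\textbf{Proof plan for Lemma \ref{approxXa}.}

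The strategy splits naturally into three pieces, matching the fact that $f_m$ is a genuine (possibly defective) generating function of a measure with finite mean, so that Lemma \ref{dualfinitemoment} applies to each $X^{(m)}$, and that the sequences $(f_m)_{m\geq 1}$ and $(\mu_m)_{m\geq 1}$ are built so as to be monotone.

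\emph{Step 1: Monotonicity of $(X^{(m)})_{m\geq 1}$.} First I would observe that each $f_m$ is the generating function of the finite measure $\mu_m$, which has a first moment (indeed a finite support up to mass relocation at $m$), so $x\mapsto f_m(x)-x$ is Lipschitz on $[0,1]$; by \cite[Theorem 2.1]{DawsonLi} (Remark \ref{Lipschitzrem}) the SDE \eqref{sdem} has a pathwise-unique strong solution, and $1$ is absorbing for it. Next I claim $f_{m+1}\leq f_m$ on $[0,1]$: from \eqref{fm} one has, for $x\in[0,1]$,
\begin{equation*}
\mu(\bar{\mathbb{N}})\big(f_m(x)-f_{m+1}(x)\big)=x^{m}\bar{\mu}(m)-x^{m}\mu(m)-x^{m+1}\bar{\mu}(m+1)=x^{m}\bar{\mu}(m+1)(1-x)\geq 0 .
\end{equation*}
Since also $\mu_m(\mathbb{N})=\mu(\bar{\mathbb{N}})$ for all $m$, the drift coefficients $b_m(x):=\mu(\bar{\mathbb{N}})(f_m(x)-x)$ satisfy $b_{m+1}\leq b_m$, both Lipschitz. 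I would then drive all the $X^{(m)}$ by the \emph{same} Poisson measure $\mathcal{M}$ and invoke the Dawson--Li comparison theorem quoted in the excerpt (the version with $\sigma=g_1'=g_1''=0$, \cite[Theorem 2.2]{DawsonLi}) with $f_1=f_{m+1}$, $f_2=f_m$ and equal initial values $x_1=x_2=x$, to conclude $X_t^{(m+1)}\leq X_t^{(m)}$ for all $t\geq 0$ a.s. Hence the decreasing limit $X_t^{(\infty)}:=\lim_{m\to\infty}\downarrow X_t^{(m)}$ exists a.s.\ and is $[0,1]$-valued.

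\emph{Step 2: Passing the duality to the limit.} For each fixed $m$, $\mu_m$ has a finite second moment (it is a finite measure with bounded support in the relevant sense, all mass at levels $\leq m$), so Lemma \ref{dualfinitemoment} gives, for every $n\in\mathbb{N}$, $x\in[0,1]$,
\begin{equation*}
\mathbb{E}\big[X^{(m)}_t(x)^{n}\big]=\mathbb{E}\big[x^{N^{(n)}_{m}(t)}\big],
\end{equation*}
where $(N^{(n)}_m(t),t\geq 0)$ is exactly the non-explosive process of Lemma \ref{Nnm}(i)--(ii) with generator $\mathcal{L}^m$ (its fragmentation part being built from $\mu_m$, matching the drift $b_m$ through the identity already used in the proof of Lemma \ref{dualfinitemoment}). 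Now fix $x\in[0,1)$ and $n\in\mathbb{N}$. On the left, $X_t^{(m)}(x)^n\downarrow X_t^{(\infty)}(x)^n$ pointwise and boundedly by $1$, so dominated (or monotone) convergence yields $\mathbb{E}[X^{(m)}_t(x)^n]\to\mathbb{E}[X^{(\infty)}_t(x)^n]$. On the right, Lemma \ref{Nnm}(iii) gives $N^{(n)}_m(t)\uparrow N^{(n)}_t$ a.s.; since $x\in[0,1)$ the map $k\mapsto x^k$ is bounded and, using the convention \eqref{convention} $x^{\infty}=0$, it is continuous on $\bar{\mathbb{N}}$, so $x^{N^{(n)}_m(t)}\to x^{N^{(n)}_t}$ a.s.\ (with the limit equal to $0$ on the explosion event), and bounded convergence gives $\mathbb{E}[x^{N^{(n)}_m(t)}]\to\mathbb{E}[x^{N^{(n)}_t}]$. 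Combining, \eqref{dualinfiny} holds for all $n\in\mathbb{N}$, $x\in[0,1)$. Finally, for $n=\infty$: by the convention \eqref{convention}, $X_t^{(\infty)}(x)^\infty=\mathbbm{1}_{\{X_t^{(\infty)}(x)=1\}}$; I would note that for $x\in[0,1)$ the decreasing limit $X^{(\infty)}$ never equals $1$ unless the minimal process has hit $1$ and been absorbed there, and by duality $\mathbb{P}(X_t^{(\infty)}(x)=1)=\lim_{n\to\infty}\mathbb{E}[X_t^{(\infty)}(x)^n]=\lim_{n}\mathbb{E}[x^{N^{(n)}_t}]=\mathbb{E}[x^{N^{(\infty)}_t}]$ by monotone convergence in $n$ on both sides, which is exactly \eqref{dualinfiny} at $n=\infty$.

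\emph{Main obstacle.} The routine-looking parts (monotonicity, bounded convergence) are genuinely routine once set up; the delicate point is the \emph{matching} of the two approximating families — namely that the dual of $X^{(m)}$ (the $\Lambda$-WF process with selection function $f_m$) is precisely the non-explosive process $N^{(n)}_m$ of Lemma \ref{Nnm}, including that the coupling realizing $N^{(n)}_m\uparrow N^{(n)}$ in Lemma \ref{Nnm}(iii) is compatible with the coupling realizing $X^{(m)}\downarrow X^{(\infty)}$. This requires checking that the generator identity $\mathcal{A}^{\mathrm{s}}_m g_n = \mathcal{L}^m g_{\cdot}(n)$ holds with $\mathcal{L}^m$ as in \eqref{genNm}: the fragmentation jump rates $n\mu_m(k)$ for $k\leq m$ must reproduce the polynomial identity $\mu(\bar{\mathbb{N}})(f_m(x)-x)g_n'(x)=n\sum_k (x^{n+k}-x^n)\mu_m(k)$, which follows from the same computation as in the proof of Lemma \ref{dualfinitemoment} applied to $\mu_m$ in place of $\mu$. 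Once this bookkeeping is in place, Ethier--Kurtz applied to each $m$ (as already verified in Lemma \ref{dualfinitemoment}, whose hypotheses hold a fortiori since $\mu_m$ has bounded support) delivers \eqref{dualmoment1} for $X^{(m)}$, and the two monotone limits close the argument. \qed
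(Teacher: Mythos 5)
Your proposal is correct and follows essentially the same route as the paper: establish monotonicity of $(X^{(m)})_m$ via the comparison theorem using $f_{m+1}\leq f_m$, apply Lemma \ref{dualfinitemoment} to each $X^{(m)}$ with its non-explosive dual $N^{(n)}_m$ from Lemma \ref{Nnm}, and pass both sides of the duality to the limit by bounded convergence. Your extra care in verifying the generator matching between $\mathcal{A}^{\mathrm{s},(m)}$ and $\mathcal{L}^{m}$, and in treating the case $n=\infty$ via the convention \eqref{convention}, is consistent with (and slightly more explicit than) the paper's argument.
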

\begin{proof} 
Note that for any $m\geq 1$, $f'_m(1-)<\infty$,  therefore the SDE \eqref{sdem} admits a unique strong solution. Moreover, $f_m(x)-f_{m+1}(x)=\bar{\mu}(m+1)(x^{m}-x^{m+1})\geq 0$ for any $x\in [0,1]$. By  the comparison theorem for SDEs with jumps, we see that \eqref{order} holds true and thus $(X^{(\infty}_t,t\geq 0) $ is well defined.

Recall Lemma \ref{Nnm} and consider a monotone sequence of processes $(N^{(n)}_{m}(t),t\geq 0)$ with generator $\mathcal{L}^m$ defined in \eqref{genNm}, with splitting measure $\mu_m$
such that $\mu_{m}(k)=\mu(k)$ if $k\leq m-1$ and $\mu_m(m)=\bar{\mu}(m)$. Since $\mu_m$ admits a second moment, the duality relationship \eqref{dualmoment1} holds and we have 
\begin{equation}\label{dualm}\mathbb{E}[x^{N_m^{(n)}(t)}]=\mathbb{E}[(X_t^{(m)}(x))^{n}].
\end{equation}
By Lemma \ref{Nnm}, $N_m^{(n)}(t)$ converges almost surely towards $N^{(n)}_t$ as $m$ goes to $\infty$. Hence, the identity \eqref{dualinfiny} follows readily by taking limit on $m$.
\qed
\end{proof}

\begin{lemma}\label{identificationXinfinity} The limit process $(X^{(\infty)}_t,t\geq 0)$ has the same law as   $(X^{\mathrm{min}}_t,t\geq 0)$ the minimal solution of \eqref{SDEselec}. 
\end{lemma}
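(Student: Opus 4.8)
The plan is to show that $(X^{(\infty)}_t,t\ge 0)$ is a solution of the stopped martingale problem $(\mathrm{MP})$ that, by Lemma \ref{minimalprocess}, characterizes the minimal process $(X^{\mathrm{min}}_t,t\ge 0)$ uniquely. Equivalently, I will show two things: (a) up to the first hitting time $\tau$ of $\{0,1\}$, the process $X^{(\infty)}$ agrees in law with the unique solution of \eqref{SDEselec}, and (b) after $\tau$, $X^{(\infty)}$ stays absorbed at the boundary it has reached. Since each approximant $X^{(m)}$ solves a genuine SDE with a Lipschitz drift $\mu_m(\NN)(f_m(x)-x)$, and since $f_m \uparrow f$ pointwise on $[0,1)$ with $\mu_m(\NN)=\mu(\bar{\NN})$, the natural route is to pass to the limit in the martingale problem associated to \eqref{sdem}.

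First I would fix $g\in C^2_c([0,1])$ (so $g$ and its derivatives vanish near $1$) and write, for each $m$, that
\[
g(X^{(m)}_t)-\int_0^t \mathcal{A}^{\mathrm{s}}_m g(X^{(m)}_s)\,\dd s
\]
is a martingale, where $\mathcal{A}^{\mathrm{s}}_m g(x)=\mathcal{A}g(x)+\mu(\bar{\NN})(f_m(x)-x)g'(x)$. Because $g'$ has compact support in $(0,1)$, the drift terms are uniformly bounded: $|f_m(x)-x|\le 2$ on $\mathrm{supp}(g')$, and $f_m(x)\to f(x)$ uniformly on that compact set by Dini's theorem (monotone convergence of continuous functions to a continuous limit). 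Hence $\mathcal{A}^{\mathrm{s}}_m g\to \mathcal{A}^{\mathrm{s}}g$ uniformly on $[0,1]$. Combined with the almost sure monotone convergence $X^{(m)}_t\downarrow X^{(\infty)}_t$ (from \eqref{order}) and dominated convergence, one passes to the limit to conclude that
\[
g(X^{(\infty)}_t)-\int_0^t \mathcal{A}^{\mathrm{s}}g(X^{(\infty)}_s)\,\dd s
\]
is a martingale for every $g\in C^2_c([0,1])$; here I should note that $X^{(\infty)}$ is right-continuous as a decreasing limit of càdlàg processes that is itself a supermartingale-type limit — more carefully, right-continuity of $X^{(\infty)}$ should be extracted from the construction, or one works with the martingale problem in a form robust to this. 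This shows $X^{(\infty)}$ solves the (unstopped-looking, but $C^2_c$-restricted) martingale problem.

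Next I would handle the boundary behaviour, which is where the identification actually bites. Let $\tau=\inf\{t>0:X^{(\infty)}_t\notin(0,1)\}$. On $\{t<\tau\}$ the restriction of $X^{(\infty)}$ solves the same local martingale problem as the minimal solution, and by the uniqueness in Lemma \ref{minimalprocess} (pathwise uniqueness up to $\tau$ for \eqref{SDEselec}) the two agree in law up to $\tau$; in particular $\tau$ has the same law as the hitting time for $X^{\mathrm{min}}$. It remains to check that $X^{(\infty)}$ is absorbed at $0$ and at $1$ after $\tau$. Absorption at $0$ is immediate since each $X^{(m)}$ is absorbed at $0$ (the drift and noise coefficients vanish there) and $0\le X^{(\infty)}\le X^{(m)}$. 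Absorption at $1$ is the delicate point: I would use the duality \eqref{dualinfiny} from Lemma \ref{approxXa}. Taking $x\to 1-$ in $\EE[X^{(\infty)}_t(x)^n]=\EE[x^{N^{(n)}_t}]$ and using that $N^{(n)}_t\le$ a non-exploding branching majorant when — wait, more simply: for fixed $n$, letting $x\uparrow 1$ gives $\EE[X^{(\infty)}_t(1-)^n]=\PP(N^{(n)}_t<\infty)$, and then letting $n\to\infty$ with the convention \eqref{convention} forces $\PP(X^{(\infty)}_t(1-)=1)=\lim_n \PP(N^{(n)}_t<\infty)$, matching exactly the value $\PP(X^{\mathrm{min}}_t=1)$ predicted for the minimal process (for which absorption at $1$ is built in). Since this holds for all $t$, together with the agreement up to $\tau$ and the strong Markov property, $X^{(\infty)}$ must remain at $1$ once it is there: if it left $1$ with positive probability the one-dimensional marginal $\PP(X^{(\infty)}_{t}=1)$ would be strictly smaller than $\PP(\tau_1\le t)$, contradicting the duality identity which pins it to the minimal value.

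The main obstacle I anticipate is the rigorous treatment of the boundary $1$: one has to rule out that the limit process, which is only obtained as a decreasing limit, could "bounce off" $1$ — i.e. that $X^{(\infty)}$ is genuinely absorbed there and not merely that its one-dimensional marginals are consistent with absorption. The clean way around this is precisely to lean on the duality \eqref{dualinfiny} to compute $\PP(X^{(\infty)}_t=1)$ for all $t$, combine it with the Markov property of $X^{(\infty)}$ (inherited from the martingale-problem formulation, or directly from the Poissonian construction shared by all $X^{(m)}$), and deduce that $\{X^{(\infty)}_t=1\}$ is an absorbing event; then $X^{(\infty)}$ and $X^{\mathrm{min}}$ solve the same stopped martingale problem $(\mathrm{MP})$ and Lemma \ref{minimalprocess}'s uniqueness finishes the identification. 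A secondary technical point is confirming right-continuity/càdlàg regularity of the decreasing limit $X^{(\infty)}$, which I would address either by a standard monotone-limit argument using that the $X^{(m)}$ are built from the same driving noise $\mathcal{M}$, or by noting it is not needed beyond the martingale-problem level.
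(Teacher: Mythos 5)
Your overall architecture is the right one and matches the paper's: pass to the limit in the martingale problem for the $X^{(m)}$'s using uniform convergence of the drift on the compact support of $g'$ (the paper gets this from the explicit bound $0\leq f_m(x)-f(x)\leq x^{m}$ rather than Dini, but both work), and then invoke the uniqueness of the stopped martingale problem $(\mathrm{MP})$ from Lemma \ref{minimalprocess}. You also correctly identify that the real content is the absorption at the boundary $1$.

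However, your argument for absorption at $1$ does not close. You propose to read absorption off the one-dimensional marginals: ``if it left $1$ with positive probability the marginal $\mathbb{P}(X^{(\infty)}_t=1)$ would be strictly smaller than $\mathbb{P}(\tau_1\leq t)$, contradicting the duality identity which pins it to the minimal value.'' The duality \eqref{dualinfiny} only gives $\mathbb{P}(X^{(\infty)}_t(x)=1)=\lim_{n}\mathbb{E}[x^{N^{(n)}_t}]=\mathbb{E}[x^{N^{(\infty)}_t}]$; it says nothing a priori about $\mathbb{P}(\tau_1\leq t)$. Equating $\mathbb{E}[x^{N^{(\infty)}_t}]$ with $\mathbb{P}(\tau_1\leq t)$ is exactly the statement that $1$ is absorbing, so the contradiction you invoke presupposes what you are trying to prove. (The computation $\lim_{x\to 1-}\mathbb{E}[X^{(\infty)}_t(x)^{n}]=\mathbb{P}(N^{(n)}_t<\infty)$ that you insert concerns the entrance behaviour at $1$, not absorption, and is a red herring here.) Your absorption at $0$ also leans on the inequality in the wrong direction: $X^{(\infty)}\leq X^{(m)}$ does not transfer absorption at $0$ downward; the paper instead takes $x=0$ in \eqref{dualinfiny} to get $X^{(\infty)}_t(0)=0$ a.s.

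The fix for the boundary $1$ is elementary and uses the monotone comparison you already have, which is what the paper does. Since $f_m'(1-)<\infty$, each $X^{(m)}$ is absorbed at $1$ by pathwise uniqueness (Remark \ref{Lipschitzrem}), and $X^{(m)}_t\geq X^{(\infty)}_t$ for all $t$. Hence on $\{\tau_1^{(\infty)}<\infty\}$ one has $X^{(m)}_{\tau_1^{(\infty)}}=1$ for every $m$, so $X^{(m)}_{t+\tau_1^{(\infty)}}=1$ for all $t\geq 0$, and therefore $X^{(\infty)}_{t+\tau_1^{(\infty)}}=\lim_{m}X^{(m)}_{t+\tau_1^{(\infty)}}=1$. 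No marginal or Markov-property argument is needed. With that replacement (and the $x=0$ duality for the boundary $0$), your proof is essentially the paper's.
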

\begin{proof}
We first precise the behavior of the process $(X^{(\infty)}_t(x),t\geq 0)$  when it reaches one of its boundaries $1$ or $0$. For the boundary $1$, since $\mu_m$ has a finite mean, Corollary 1.4 and Remark 1.5 in \cite{cdiEFC} apply and entail that the non-explosive processes $(N^{(\infty)}_m(t),t\geq 0)$ comes down from infinity. As observed in Section \ref{backgroundWF}, this is equivalent to the fact that the dual process $(X_t^{(m)},t\geq 0)$ is getting absorbed at $1$ in finite time with positive probability. Note that by \eqref{order}, $\tau_1^{(m)}:=\inf\{t\geq 0: X_t^{(m)}=1\}$  verifies $\tau_1^{(m+1)}\geq \tau_1^{(m)}$ and therefore $\tau_1^{(\infty)}:=\inf\{t\geq 0: X^{(\infty)}_t=1\}\geq \underset{m\rightarrow \infty}{\lim}\uparrow \tau_1^{(m)}$. Hence, since $1$ is absorbing for $(X_t^{(m)},t\geq 0)$ and $\tau_1^{(\infty)}\geq \tau_1^{(m)}$ a.s. If $\tau_1^{(\infty)}<\infty$ then 
$X^{(\infty)}_{t+\tau_1^{(\infty)}}=\underset{m\rightarrow \infty}{\lim} X^{(m)}_{t+\tau_1^{(\infty)}}=1 \text{ for any } t\geq 0$ a.s. On the event $\{\tau_1^{\infty}<\infty\}$, 
$(X^{(\infty)}_t,t\geq 0)$ is absorbed at its boundary $1$ in finite time, hence for any $t\geq 0$, $\mathbb{P}(X^{(\infty)}_t=1)=\mathbb{P}(\tau_1^{(\infty)}\leq t)$ and 
\[\mathbb{P}(\tau_1^{(m)}\leq t))=\mathbb{P}(X^{(m)}_t=1)\underset{m\rightarrow \infty}{\longrightarrow} \mathbb{P}(X^{(\infty)}_t=1)=\mathbb{P}(\tau_1^{(\infty)}\leq t),\]
and thus $\tau_1^{(\infty)}=\underset{m\rightarrow \infty}{\lim}\uparrow \tau_1^{(m)}$ a.s.

For the boundary $0$, by taking $x=0$ in the duality relationship \eqref{dualinfiny}, we see that $X_t^{(\infty)}(0)=0$ a.s. Hence, $0$ is  necessarily absorbing.

We establish now that the process $(X^{(\infty)}_t,t>0)$ stopped at its first hitting time of the boundaries has the same law as the minimal process. 
This follows from uniform convergence of the generators. Recall $\mathcal{A}^{\mathrm{s}}$ the generator of the minimal solution $(X^{\mathrm{min}}_t,t\geq 0)$ to \eqref{SDEselec}. For any $g\in C^2_c((0,1))$, $\mathcal{A}^{\mathrm{s}}g(x)=\mathcal{A}g(x)+\mu(\bar{\mathbb{N}})(f(x)-x)g'(x)$. Let $\mathcal{A}^{\mathrm{s},(m)}$ be the generator of $(X_t^{(m)},t\geq 0)$. Since the jump parts of $\mathcal{A}^{\mathrm{s}}$ and $\mathcal{A}^{\mathrm{s},(m)}$ are the same, and $\mu(\bar{\mathbb{N}})=\mu_m(\bar{\mathbb{N}})$ for any $m$, we have that
\begin{align}\label{diffgen}
||\mathcal{A}^{\mathrm{s},(m)}g-\mathcal{A}^{\mathrm{s}}g||_{\infty}&=\mu(\bar{\mathbb{N}})\underset{x\in (0,1)}{\sup}|(f_m(x)-f(x))g'(x)|.
\end{align}
For any $x\in [0,1]$,
\begin{align*}
0\leq f_m(x)-f(x)&=\frac{1}{\mu(\bar{\mathbb{N}})}\sum_{k\geq m}(x^{m}-x^{k})\mu(k)\\
&\leq  \frac{1}{\mu(\bar{\mathbb{N}})}x^{m}\sum_{j\geq 0}(1-x^{j})\mu(j+m)\leq \frac{\bar{\mu}(m)}{\mu(\bar{\mathbb{N}})}x^m\leq x^m.
\end{align*}
Hence we see from \eqref{diffgen} that $||\mathcal{A}^{\mathrm{s},(m)}g-\mathcal{A}^{\mathrm{s}}g||_{\infty}\leq \sup_{x\in (0,1)}|x^{m}g'(x)|$. Since by assumption $g'$ has a compact support on $(0,1)$ and for any $x\in (0,1)$, $x^{m}\underset{m\rightarrow \infty}{\longrightarrow}0$,  one has
\begin{equation}\label{convunifAm} ||\mathcal{A}^{\mathrm{s},(m)}g-\mathcal{A}^{\mathrm{s}}g||_{\infty}\underset{m\rightarrow \infty}{\longrightarrow} 0.
\end{equation}
Moreover for large enough $m\geq 1$, $||\mathcal{A}^{\mathrm{s},(m)}g||_\infty\leq 1+||\mathcal{A}^{\mathrm{s}}g||_\infty$ and since $X_s^{(m)}\underset{k\rightarrow \infty}{\longrightarrow} X_s^{(\infty)}$ a.s. for any $s\geq 0$, $\mathcal{A}^{\mathrm{s},(m)}g(X_s^{(m)})\underset{m\rightarrow \infty}{\longrightarrow} \mathcal{A}^{\mathrm{s}}g(X_s^{(\infty)})$ a.s. for any $s\geq 0$. Let $0\leq t_1\leq t_2\leq \ldots \leq t_n\leq s<t$ and $f_1,\ldots, f_n$ some bounded and continuous functions. By Lemma \ref{minimalprocess} applied to the process $(X_t^{(m)},t\geq 0)$, for any $g\in C_c^{2}([0,1])$, the process
\[\left(g(X_{t\wedge \tau^{(m)}}^{(m)})-\int_{0}^{t}\mathcal{A}^{\mathrm{s},(m)}g(X_{s\wedge \tau^{(m)}}^{(m)})\ddr s, t\geq 0\right)\]
is a martingale.  By applying Lebesgue's theorem, we have that 
\begin{align*}
&\mathbb{E}_x\left[\left(g(X_{t\wedge \tau^{(\infty)}}^{(\infty)})-g(X_{s\wedge \tau^{(\infty)}}^{(\infty)})-\int_{s}^{t}\mathcal{A}^{\mathrm{s}}g(X_{r\wedge \tau^{(\infty)}}^{(\infty)})\ddr r\right)\prod_{i=1}^{n}f_i(X_{t_i\wedge \tau^{(\infty)}}^{(\infty)})\right]\\
&=\underset{m\rightarrow \infty}{\lim} \mathbb{E}_x\left[\left(g(X_{t\wedge \tau^{(m)}}^{(m)})-g(X_{s\wedge \tau^{(m)}}^{(m)})-\int_{s}^{t}\mathcal{A}^{\mathrm{s},(m)}g(X_{r\wedge \tau^{(m)}}^{(m)})\ddr r\right)\prod_{i=1}^{n}f_i(X_{t_i\wedge \tau^{(m)}}^{(m)})\right]\\
&=0.
\end{align*}
This shows that 
the limiting process $(X^{(\infty)}_t,t\geq 0)$ stopped at time $\tau^{(\infty)}$ solves the martingale problem $\mathrm{(MP)}$. Lemma \ref{minimalprocess} ensures that there is a unique solution to 
$(\mathrm{MP})$, therefore $(X^{(\infty)}_t,t<\tau^{(\infty)})$ has the same law as the minimal process.
\qed
\end{proof}


\noindent \textbf{Proof of Theorem \ref{thmmomentdual1}.} This is the combination of Lemma \ref{approxXa} and Lemma \ref{identificationXinfinity}.\qed

We deduce now from Theorem \ref{thmmomentdual1} two important results for the block counting process of a simple EFC process. Those results were left unaddressed in \cite{cdiEFC} and \cite{explosion}. Recall Lemma \ref{monotonicity} and that when $n<\infty$, $(N^{(n)}_t,t<\zeta_{\infty})$ has the same law as $(\#\Pi(t),t<\zeta_{\infty})$ where $(\Pi(t),t\geq 0)$ is a simple EFC process started from a partition with $n$ blocks, and whose coalescence measure is $\Lambda$ and splitting measure is $\mu$.
\begin{theorem}[Markov property of $(\#\Pi(t),t\geq 0)$]\label{Markovblockcounting} Let $(\Pi(t),t\geq 0)$ be a simple EFC process whose coalescence measure is $\Lambda$ and splitting measure is $\mu$. The block counting process $(N_t,t\geq 0):=(\#\Pi(t),t\geq 0)$ with state-space  $\bar{\mathbb{N}}$ is a Markov process satisfying the Feller property.
\end{theorem}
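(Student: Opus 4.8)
The plan is to derive the Markov and Feller properties of $(N_t,t\geq 0):=(\#\Pi(t),t\geq 0)$ started from an arbitrary point of $\bar{\mathbb{N}}$ (including $\infty$) directly from the moment-duality relation \eqref{dualEFC} of Theorem \ref{thmmomentdual1}. The point is that Lemma \ref{genEFC} and Lemma \ref{monotonicity} already give us everything on the event $\{N_0=n<\infty\}$ and up to the explosion time $\zeta_\infty$; what remains is to understand the behavior after explosion and when starting from $\infty$, and the duality with the \emph{minimal} Wright--Fisher process $(X^{\mathrm{min}}_t(x),t\geq 0,x\in[0,1])$ is exactly the tool that encodes this, since that process is itself a nice (Feller) Markov process on the compact state space $[0,1]$.

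First I would record that, by Theorem \ref{thmmomentdual1}, for every $t\geq 0$ and every $x\in[0,1]$ we have $\mathbb{E}[x^{N^{(n)}_t}]=\mathbb{E}[X^{\mathrm{min}}_t(x)^n]$ for all $n\in\mathbb{N}$, and, by the convention \eqref{convention} and monotone convergence in $n$, also $\mathbb{E}[x^{N^{(\infty)}_t}]=\mathbb{P}(X^{\mathrm{min}}_t(x)=1)$. Since $x\mapsto x^n$, $n\in\bar{\mathbb{N}}$, together with constants generate (by Stone--Weierstrass) a dense subalgebra of $C(\bar{\mathbb{N}})$ — recall $g\in C(\bar{\mathbb{N}})$ iff $g(n)\to g(\infty)$ — the family of laws $\{\mathcal{L}(N^{(n)}_t): n\in\bar{\mathbb{N}}\}$ is completely determined by the maps $x\mapsto \mathbb{E}[X^{\mathrm{min}}_t(x)^n]$. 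In particular the one-dimensional distributions of $(N_t)$ are in one-to-one correspondence with those of $(X^{\mathrm{min}}_t)$, just as in the classical case around \eqref{dualclassic}. I would then define the candidate transition kernel $Q_t(n,\cdot)$ on $\bar{\mathbb{N}}$ by declaring $\int_{\bar{\mathbb{N}}} x^m\, Q_t(n,\mathrm{d}m):=\mathbb{E}[X^{\mathrm{min}}_t(x)^n]$ for all $x\in[0,1]$ (this is a bona fide probability measure on $\bar{\mathbb{N}}$ because $X^{\mathrm{min}}_t(x)\in[0,1]$ and its moments form a completely monotone-type sequence; it is literally the law of $N^{(n)}_t$ by \eqref{dualEFC}).

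The heart of the argument is the semigroup (Chapman--Kolmogorov) property $Q_{s+t}=Q_sQ_t$ together with the Feller property $Q_t C(\bar{\mathbb{N}})\subseteq C(\bar{\mathbb{N}})$ and $Q_t g\to g$ as $t\to 0$. The Feller mapping property is immediate: $Q_t g_x(n)=\mathbb{E}[X^{\mathrm{min}}_t(x)^n]$, and since $(X^{\mathrm{min}}_t,t\geq0)$ is a Markov process we have $n\mapsto \mathbb{E}[X^{\mathrm{min}}_t(x)^n]\to \mathbb{P}(X^{\mathrm{min}}_t(x)=1)$ as $n\to\infty$ by dominated convergence and \eqref{convention}, so $Q_t g_x\in C(\bar{\mathbb{N}})$; by density this extends to all of $C(\bar{\mathbb{N}})$, and $\|Q_t g\|_\infty\le\|g\|_\infty$. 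For Chapman--Kolmogorov, the cleanest route is to transfer it from the Wright--Fisher side: one shows $\mathbb{E}[X^{\mathrm{min}}_{s+t}(x)^n]=\mathbb{E}\big[\mathbb{E}[X^{\mathrm{min}}_{s}(X^{\mathrm{min}}_t(x))^n\mid X^{\mathrm{min}}_t(x)]\big]$ using that $(X^{\mathrm{min}}_t(x),t\ge0,x\in[0,1])$ is a (Feller) Markov process — this is precisely Lemma \ref{minimalprocess} together with the martingale-problem characterization $(\mathrm{MP})$, which identifies it as a Feller process on $[0,1]$ — and then rewrites each factor via \eqref{dualEFC}; duality turns the outer expectation over the random initial condition $X^{\mathrm{min}}_t(x)$ into an integral against $Q_t(n,\cdot)$. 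Equivalently one can run the standard duality bookkeeping: starting from independent copies of $X^{\mathrm{min}}$ and $N$, the function $(s)\mapsto\mathbb{E}[X^{\mathrm{min}}_{s}(x)^{N^{(n)}_{t-s}}]$ is constant in $s\in[0,t]$ (this is how \eqref{dualEFC} itself is proved in Lemma \ref{dualfinitemoment} via Ethier--Kurtz), and evaluating at the two endpoints while inserting an intermediate time yields $Q_{s+t}=Q_sQ_t$. Having $Q$ a Feller semigroup on the compact space $\bar{\mathbb{N}}$, the general theory (e.g. \cite[Theorem 4.2.7]{EthierKurtz}) gives a Feller — in particular strong Markov — process with these transition kernels; finally I would check that, started from $n<\infty$, this process agrees with $(N^{(n)}_t,t\ge0)$ of Lemma \ref{monotonicity} up to $\zeta_\infty$ (both have generator $\mathcal{L}=\mathcal{L}^c+\mathcal{L}^f$ by Lemma \ref{genEFC} and the same one-dimensional laws by \eqref{dualEFC}), and that started from $\infty$ it agrees with $N^{(\infty)}$ by the monotone limit in Lemma \ref{monotonicity} and dominated convergence in the duality identity; so $(N_t,t\ge0)=(\#\Pi(t),t\ge0)$ is indeed this Feller process.

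The main obstacle, I expect, is not any single estimate but pinning down the post-explosion dynamics cleanly: the duality identity \eqref{dualEFC} only controls $\mathbb{E}[x^{N^{(n)}_t}]$ for $x\in[0,1]$, which determines the law of $N^{(n)}_t$ but does not, by itself, say that $(N^{(n)}_t,t\ge0)$ is a Markov process across $\zeta_\infty$ — one genuinely needs the Markov/Feller structure of the dual $X^{\mathrm{min}}$ (its compact state space and the well-posedness of $(\mathrm{MP})$) to upgrade a family of one-dimensional marginals into a consistent semigroup, and then to verify that the Feller process produced abstractly by Ethier--Kurtz really is a version of $\#\Pi$. Handling the value $\infty$ as a genuine accessible/regular state (rather than a cemetery) is exactly where the "minimal" choice on the $X$-side and the monotone approximations of Lemma \ref{monotonicity} and Lemma \ref{Nnm} do the work, and I would be careful to invoke the convention \eqref{convention} consistently so that continuity at $\infty$ is not lost when passing to the limit in $n$.
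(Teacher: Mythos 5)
Your proposal is correct and follows essentially the same route as the paper: both derive the Chapman--Kolmogorov/Markov property by transferring the Markov property of the Feller dual $(X^{\mathrm{min}}_t(x),t\geq 0)$ through the duality \eqref{dualEFC}, and both obtain the Feller property on $\bar{\mathbb{N}}$ from continuity of $n\mapsto\mathbb{E}_n[x^{\#\Pi(t)}]$ at $\infty$ plus a Stone--Weierstrass density argument, with strong continuity at $0$ from right-continuity of $X^{\mathrm{min}}$. The only cosmetic difference is that you construct the semigroup abstractly and then identify the process via Ethier--Kurtz, whereas the paper works directly with the Poissonian coupling $\Pi^{(n)}$ to conclude that $\#\Pi(t+s)$ has the law of $\#\Pi^{(\#\Pi(t))}(s)$.
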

\begin{proof}
Let $(\Pi^{(n)}(t),t\geq 0)$ be the process following the coalescences and fragmentations involving only the first $n$ initial blocks of $(\Pi(t),t\geq 0)$, see Section \ref{backgroundEFC} and \cite[Lemma 3.3 and Lemma 3.4]{cdiEFC}. In particular, $\Pi^{(n)}(0):=(\Pi_1(0),\ldots,\Pi_n(0))$. When $n=\#\Pi(0)$, both processes $(N_t^{(n)},t\geq 0):=(\#\Pi^{(n)}(t),t\geq 0)$ and $(\Pi(t),t\geq 0)$ coincide. Hence, by Theorem \ref{thmmomentdual1} and the duality relationship \eqref{dualEFC}, for any $x\in [0,1]$ and any $t\geq 0$, $\mathbb{E}[x^{\#\Pi(t)}]=\mathbb{E}[X_t^{\mathrm{min}}(x)^{\#\Pi(0)}]$, where $(X_t^{\mathrm{min}}(x),t\geq 0)$ is the unique solution to \eqref{SDEselec} absorbed at the boundaries. Without loss of generality, we enlarge the probability space on which $(\Pi(t),t\geq 0)$ is defined by assuming that $(X_t^{\mathrm{min}}(x),t\geq 0)$ is also defined on it and is independent of $(\Pi(t),t\geq 0)$. By the Markov property of $(X_t^{\mathrm{min}}(x),t\geq 0)$, for any $s,t\geq 0$, conditionally given $X^{\mathrm{min}}_s(x)$, the random variable $X_{s+t}^{\mathrm{min}}(x)$ has the same law as $X_t^{\mathrm{min}}(X_{s}^{\mathrm{min}}(x))$. Hence, setting $n=\#\Pi(0)$, 
\begin{align*}\mathbb{E}[x^{\#\Pi(s+t)}]&=\mathbb{E}[x^{\#\Pi^{(n)}(s+t)}]=\mathbb{E}[X_{s+t}^{\mathrm{min}}(x)^{n}]=\mathbb{E}[\mathbb{E}[X_{t}^{\mathrm{min}}(X_{s}^{\mathrm{min}}(x))^{n}|X_{s}^{\mathrm{min}}(x)]]\\
&=\mathbb{E}[X_s^{\mathrm{min}}(x)^{\#\Pi^{(n)}(t)}]=\mathbb{E}[x^{\#\Pi^{(\#\Pi^{(n)}(t))}(s)}].
\end{align*}
We see finally that $\#\Pi(t+s)$  has the same distribution as $\#\Pi^{(\#\Pi(t))}(s)$. The process $(\#\Pi(t),t\geq 0)$ is therefore  Markovian. We now establish the Feller property. Recall $\bar{\mathbb{N}}$ the one-point compactification of $\mathbb{N}$ and $C(\bar{\mathbb{N}})$ the space of continuous functions defined on $\bar{\mathbb{N}}$.  By the duality relationship \eqref{dualEFC}, we see that 
\begin{equation}\label{entrancelawN}
\underset{n\rightarrow \infty}{\lim} \mathbb{E}_n(x^{\#\Pi(t)})=\mathbb{P}(X_t^{\mathrm{min}}(x)=1)=\mathbb{E}[x^{N_t^{(\infty)}}].
\end{equation} Recall $g_x(n)=x^{n}$ and set $x^{\infty}:=\underset{n\rightarrow \infty}{\lim} x^{n}=\mathbbm{1}_{\{x=1\}}$. We have just established in \eqref{entrancelawN} that $n\mapsto \mathbb{E}_n(g_x(\#\Pi(t)))$ is continuous at $\infty$. The subalgebra $A$ of $C(\bar{\mathbb{N}})$ generated by the linear combinations of the maps $\{n\mapsto g_x(n), x\in [0,1]\}$,  is separating $C(\bar{\mathbb{N}})$. Moreover, for any $n\in \bar{\mathbb{N}}$, there is a function $g$ in $A$ such that $g(n)\neq 0$. By the Stone-Weierstrass theorem, $A$ is dense in $C(\bar{\mathbb{N}})$ for the uniform norm, since $n\mapsto \mathbb{E}_n(g(\#\Pi(t))) $ is continuous for any $g\in A$, this holds true for any $g\in C(\bar{\mathbb{N}})$ and the semigroup of  $(\#\Pi(t),t\geq 0)$ maps $C(\bar{\mathbb{N}})$ to $C(\bar{\mathbb{N}})$. It remains to verify the continuity of the semigroup at $0$. This is a direct application of the duality relationship \eqref{momentdual1}, since $X^{\mathrm{min}}$ is right-continuous. Finally the process $(\#\Pi(t),t\geq 0)$ is Feller. \qed
\end{proof}
Recall the first hitting times of the boundary $\tau_i:=\inf\{t\geq 0; X_t^{\mathrm{min}}=i\}$ for $i\in \{0,1\}$.
\begin{theorem}[Recurrence of $(N_t,t\geq 0)$] \label{rec} If the process $(N_t^{(\infty)},t\geq 0)$ comes down from infinity (i.e. infinity is non-absorbing), then it is positive recurrent and has a stationary distribution whose generating function is $\varphi:x\in [0,1] \mapsto \mathbb{P}_x(\tau_1<\tau_0)$.
\end{theorem}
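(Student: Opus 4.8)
The plan is to deduce everything from the moment-duality \eqref{dualEFC} of Theorem \ref{thmmomentdual1} together with the Feller property just proved in Theorem \ref{Markovblockcounting}, mirroring the classical argument that links recurrence of the block counting process to the hitting-time structure of the dual Wright-Fisher process. First I would record the key observation that, since $(N_t^{(\infty)},t\geq 0)$ comes down from infinity, Theorem \ref{thmmomentdual1} tells us that the minimal process $(X_t^{\mathrm{min}}(x),t\geq 0)$ gets absorbed at $1$ with positive probability, and by the monotonicity/left-continuity argument from Section \ref{backgroundWF} the boundary $1$ is absorbing (as is $0$). Thus for the minimal process, $X_t^{\mathrm{min}}(x)\to \mathbbm{1}_{\{\tau_1<\tau_0\}}$ pointwise a.s.\ as $t\to\infty$, being a bounded process that ends up absorbed at $0$ or $1$. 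The candidate stationary generating function is then $\varphi(x)=\mathbb{P}_x(\tau_1<\tau_0)=\lim_{t\to\infty}\mathbb{E}[X_t^{\mathrm{min}}(x)^n]$ for every $n$ (the limit does not depend on $n$ because the limiting random variable is $\{0,1\}$-valued).

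Next I would pass this through the duality. By \eqref{dualEFC}, for $n<\infty$, $\mathbb{E}_n[x^{N_t}]=\mathbb{E}[X_t^{\mathrm{min}}(x)^n]\xrightarrow[t\to\infty]{}\varphi(x)$ for every $x\in[0,1)$ (and trivially at $x=1$). Since $N_t$ takes values in $\bar{\mathbb{N}}$ and the functions $x\mapsto x^n$ are convergence-determining on $\bar{\mathbb{N}}$ (by the Stone--Weierstrass argument already used in Theorem \ref{Markovblockcounting}), this shows $N_t^{(n)}$ converges in distribution as $t\to\infty$ to a limit law $\pi$ on $\bar{\mathbb{N}}$ with generating function $\varphi$; because $\varphi(x)\to\varphi(1)=1$ as $x\to 1-$ and $\varphi$ corresponds to the ``fixation at $1$'' probability which is $<1$ for $x<1$ while $\varphi(1)=1$, the limit law $\pi$ has no mass at $\infty$, i.e.\ $\pi$ is a genuine probability measure on $\mathbb{N}$. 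One then checks $\pi$ is stationary: by the Feller property and the Markov property of $N$, $\int \mathbb{E}_n[\,\cdot\,] \pi(\mathrm{d}n)$ is invariant under the semigroup because it is the $t\to\infty$ limit of the one-dimensional marginals, which already converge; concretely $\mathbb{E}_\pi[x^{N_s}]=\lim_{t}\mathbb{E}_\pi[x^{N_{t+s}}]$-type manipulations using bounded convergence give invariance of $\pi$.

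Finally I would upgrade distributional convergence to positive recurrence. The process $(N_t,t\geq 0)$ is an irreducible continuous-time Markov chain on $\mathbb{N}$ (irreducibility: coalescence events decrease the count by any amount and fragmentation events increase it, so all states communicate, using $\Lambda\neq 0$ and $\mu\neq 0$); once we know it is non-explosive on $\mathbb{N}$ after coming down from infinity --- here we use that ``$\infty$ non-absorbing'' plus the Feller property forces the chain started from any finite $n$ not to reach $\infty$ permanently, and in the comes-down case one argues the chain is actually non-explosive, e.g.\ via Lemma \ref{Nnm} --- an irreducible non-explosive chain whose marginals converge to a probability law $\pi$ on $\mathbb{N}$ is necessarily positive recurrent with stationary distribution $\pi$. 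The main obstacle, and the step I would be most careful with, is the last one: making rigorous that $N$ restricted to $\mathbb{N}$ is a well-behaved (non-explosive, irreducible) chain so that ``marginal convergence to a genuine probability measure on $\mathbb{N}$'' legitimately yields positive recurrence rather than merely null recurrence or transience to $\infty$. The comes-down-from-infinity hypothesis is exactly what rules out escape to $\infty$; one must translate it, via the duality $\mathbb{P}(X_t^{\mathrm{min}}(x)=1)=\mathbb{E}[x^{N_t^{(\infty)}}]$ and the fact that $X^{\mathrm{min}}$ eventually absorbs, into tightness of $(N_t^{(n)})_{t\geq 0}$ in $\mathbb{N}$ --- i.e.\ no mass escapes to $\infty$ --- which combined with irreducibility gives positive recurrence and uniqueness of $\pi$.
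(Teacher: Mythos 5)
Your overall route is the same as the paper's: pass to the limit $t\to\infty$ in the duality \eqref{dualEFC}, using that $(X_t^{\mathrm{min}},t\geq 0)$ is a bounded positive supermartingale converging a.s.\ to one of the absorbing states $0$ or $1$ (the paper secures accessibility of $0$ by comparison with the neutral process $Y$, and accessibility of $1$ from Theorem \ref{thmmomentdual1} under the coming down from infinity hypothesis), so that $\mathbb{E}_n[x^{N_t}]=\mathbb{E}[X_t^{\mathrm{min}}(x)^n]\to\mathbb{P}_x(\tau_1<\tau_0)=\varphi(x)$ for every $n$ and $x\in[0,1)$; convergence of the marginals on the compact space $\bar{\mathbb{N}}$ together with the Feller property of Theorem \ref{Markovblockcounting} then identifies $\varphi$ as the generating function of the limiting (hence stationary) law. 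Up to that point your argument matches the paper's, modulo the unproved assertion that $X^{\mathrm{min}}$ ``ends up absorbed at $0$ or $1$'', for which the paper supplies the supermartingale and comparison arguments.

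There is, however, a genuine gap in your final step. You claim that ``$\varphi(x)\to\varphi(1)=1$ as $x\to 1-$'' and conclude that the limit law $\pi$ puts no mass at $\infty$, so that $\pi$ is a probability on $\mathbb{N}$ and positive recurrence follows from the standard theory of irreducible non-explosive chains on $\mathbb{N}$. The atom of $\pi$ at $\infty$ is $1-\lim_{x\to 1-}\varphi(x)$, and nothing in the hypothesis forces this limit to equal $1$: the assumption is only that $\infty$ is non-absorbing for $N$, which includes the regular non-absorbing case in which $N$ keeps returning to $\infty$, and whether the occupation of $\infty$ is Lebesgue-null (equivalently, whether $\varphi$ is non-defective) is exactly the content of the Remark following the theorem and is left as an open problem in the discussion of sticky boundaries at the end of Section \ref{application}. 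Your equality $\varphi(1)=1$ is a convention about the value at $x=1$, not the left limit, so it carries no information about the mass at $\infty$. Consequently the stationary distribution must in general be allowed to live on $\bar{\mathbb{N}}$, which is how the paper states and proves the result; your argument as written proves a strictly stronger statement that is not known to be true.
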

\begin{remark} A sufficient condition for coming down from infinity of simple EFC processes is given in \cite[Theorem 1.1]{cdiEFC}. In the notation of \cite{cdiEFC}, if $\theta^{\star}<1$, then the block counting process comes down from infinity and  by Theorem \ref{rec}, $(N_t,t\geq 0)$ is positive recurrent.
\end{remark}
\begin{remark} The stationary distribution of $(N_t,t\geq 0)$ is  carried over $\mathbb{N}$ if and only if the generating function $\varphi$ is non-defective i.e. $\varphi(x)=\mathbb{P}_x(\tau_1<\tau_0)\underset{x\rightarrow 1-}{\longrightarrow} 1$. 
\end{remark}
\begin{proof}
Denote by $(Y_t(x),t\geq 0)$ the neutral $\Lambda$-Wright-Fisher process, that is to say, the unique solution to \eqref{SDEselecintro}. As recalled in the introduction, when the $\Lambda$-coalescent process comes down, the process $(Y_t(x),t\geq 0)$ has a positive probability to be absorbed at $0$. Moreover, for any $m\geq 1$, $f_m(x)-x\leq 0$, and by applying the comparison theorem between $(X_t^{(m)}(x),t\geq 0)$ and $(Y_t(x),t\geq 0)$, we get $X_t^{(m)}(x)\leq Y_t(x)$ for all $t\geq 0$ almost surely. Hence, letting $m$  converge to $\infty$, we see that $X^{\mathrm{min}}_t(x)\leq Y_t(x)$ for all $t\geq 0$ almost surely, which ensures that the process $(X^{\mathrm{min}}_t(x),t\geq 0)$ hits $0$ with positive probability. Moreover, according to Theorem \ref{thmmomentdual1}-i), since $(N_t^{(\infty)},t\geq 0)$ comes down from infinity, the process $(X^{\mathrm{min}}_t(x),t\geq 0)$ hits $1$ with positive probability. Finally, since process $(X_t^{\mathrm{min}},t\geq 0)$ is a positive supermartingale, it converges almost surely as $t$ goes to $\infty$ towards one of its absorbing  boundaries $0$ or $1$. On the event $\{\tau_1<\tau_0\}$, $\underset{t\rightarrow \infty}{\lim} X_t^{\mathrm{min}}=1$ a.s and similarly on the event $\{\tau_0<\tau_1\}$, $\underset{t\rightarrow \infty}{\lim} X_t^{\mathrm{min}}=0$ a.s.  Thus, by the duality relationship \eqref{dualabso},
$\underset{t\rightarrow \infty}{\lim} \mathbb{E}[x^{N_t^{(n)}}]=\mathbb{P}_x(\tau_1<\tau_0).$  
\qed
\end{proof}
\subsection{Moment duality II}\label{momentdual2}
The duality relation \eqref{dualEFC} somehow maps the entrance laws of the process  $(N^{(n)}_t,t\geq 0)$ to the exit laws of $(X^{\mathrm{min}}_t(x),t\geq 0)$. We now wish to get a more complete correspondence between the boundaries, and look for the process $(X_t(x),t\geq 0)$ that is dual to $(N_t,t\geq 0)$ when the boundary $\infty$ of the latter is either an exit or regular \textit{absorbing}. Recall that by  \textit{regular absorbing},  we mean that the boundary $\infty$ is reached and sample paths of the process are stopped at it. The assumption $\Lambda(\{0\})=0$ will play an important role in this section. 
We consider from now on such a coalescence measure $\Lambda$ (also with no mass at $1$), a splitting measure $\mu$ without mass at $\infty$ and its associated probability generating function $f$.
\begin{theorem}\label{thmmomentduality2} There exists a Feller process $(X_t^{\mathrm{r}}(x),t\geq 0, x\in [0,1])$ extending the minimal process such that for any $t\geq 0$ and any $n\in \mathbb{N}$, 
\begin{equation}\label{dualityEFCref}
\mathbb{E}[X^{\mathrm{r}}_t(x)^n]=\mathbb{E}[x^{N^{\mathrm{min},(n)}_t}] \text{ for any } x\in [0,1) \text{ and } \mathbb{E}[X^{\mathrm{r}}_t(1)^n]=\mathbb{P}(N^{\mathrm{min},(n)}_t<\infty),\end{equation} where $(N^{\mathrm{min},(n)}_t,t\geq 0):=(N_{t\wedge \zeta_{\infty}}^{(n)},t\geq 0)$ for $\zeta_{\infty}:=\inf\{t>0: N_t^{(n)}=\infty\}\in [0,\infty]$.
Moreover,
\begin{itemize}
\item[i)] the boundary $1$ is an entrance for $(X^{\mathrm{r}}_t(x),t\geq 0)$ if and only if $\infty$ is an exit for $(N^{(n)}_t,t\geq 0)$;
\item[ii)] the boundary $1$ is regular non-absorbing for  $(X^{\mathrm{r}}_t(x),t\geq 0)$ if and only if $\infty$ is regular absorbing for $(N^{\mathrm{min},(n)}_t,t\geq 0)$,  i.e $\infty$ is regular non-absorbing for the process $(N^{(n)}_t,t\geq 0)$;
\item[iii)] the boundary $1$ is an exit for $(X^{r}_t(x),t\geq 0)$ if and only if $\infty$ is an entrance of $(N^{(n)}_t,t\geq 0)$; 
\item[iv)] the boundary $1$ is natural for $(X^{r}_t(x),t\geq 0)$ if and only if $\infty$ is natural for $(N^{(n)}_t,t\geq 0)$. 
\end{itemize}
\end{theorem}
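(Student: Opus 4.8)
The plan is to realise $X^{\mathrm{r}}$ as the monotone vanishing‑mutation limit of $\Lambda$‑WF processes with selection carrying a small extra drift $-\lambda x$, and to push the moment duality of Theorem~\ref{thmmomentdual1} through this limit. For $\lambda>0$ let $\mu^{\lambda}:=\mu+\lambda\delta_{\infty}$, with defective generating function $f^{\lambda}=\frac{\mu(\bar{\mathbb{N}})}{\mu(\bar{\mathbb{N}})+\lambda}f$, so that $\mu^{\lambda}(\bar{\mathbb{N}})(f^{\lambda}(x)-x)=\mu(\bar{\mathbb{N}})(f(x)-x)-\lambda x$; let $X^{\lambda,\mathrm{min}}$ be the minimal solution of \eqref{SDEselec} for $(\Lambda,\mu^{\lambda})$ (Lemma~\ref{minimalprocess}) and $N^{\lambda}$ the block counting process of the simple EFC with coalescence measure $\Lambda$ and splitting measure $\mu^{\lambda}$. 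Theorem~\ref{thmmomentdual1} applied to $\mu^{\lambda}$ gives $\mathbb{E}[X^{\lambda,\mathrm{min}}_{t}(x)^{n}]=\mathbb{E}[x^{N^{\lambda,(n)}_{t}}]$, while Lemma~\ref{key} (here $\Lambda(\{0\})=0$, $\mu^{\lambda}(\infty)=\lambda>0$) says $\infty$ is an exit for $N^{\lambda}$. Since $x\mapsto X^{\lambda,\mathrm{min}}_{t}(x)$ is nondecreasing (comparison theorem, applied along the smooth approximations of Lemma~\ref{approxXa}), the limit $X^{\lambda}_{t}(1):=\lim_{x\uparrow1}\uparrow X^{\lambda,\mathrm{min}}_{t}(x)$ exists and satisfies $\mathbb{E}[X^{\lambda}_{t}(1)^{n}]=\mathbb{P}(N^{\lambda,(n)}_{t}<\infty)$; letting $n\to\infty$ and using the exit property forces $\mathbb{P}(X^{\lambda}_{t}(1)=1)=0$ for every $t>0$, so $1$ is an entrance for $X^{\lambda}$, and the Stone--Weierstrass/moment‑determinacy argument of Theorem~\ref{Markovblockcounting} (for $X$ in place of $N$, via the continuity of $x\mapsto\mathbb{E}[X^{\lambda}_{t}(x)^{n}]$ on all of $[0,1]$) makes $(X^{\lambda}_{t}(x),x\in[0,1])$ a Feller process.

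Since $\lambda\mapsto f^{\lambda}$ and the approximations $\lambda\mapsto f^{\lambda}_{m}$ are nonincreasing, the comparison theorem gives $\lambda\mapsto X^{\lambda}_{t}(x)$ nonincreasing, and we set $X^{\mathrm{r}}_{t}(x):=\lim_{\lambda\downarrow0}\uparrow X^{\lambda}_{t}(x)\in[0,1]$. Realising $N^{\lambda,(n)}$ from $N^{(n)}$ by adjoining an independent clock sending the process to the state $\infty$ (absorbing for $N^{\lambda}$) at cumulative rate $\lambda\int_{0}^{\cdot}N^{(n)}_{s}\,\mathrm{d}s$, the monotonicity in $\lambda$ of $t\mapsto\mathbb{E}[x^{N^{\lambda,(n)}_{t}}]$ together with the fact that the clock does not ring before $t\wedge\zeta_{\infty}$ with probability $\mathbb{E}[e^{-\lambda\int_{0}^{t\wedge\zeta_{\infty}}N^{(n)}_{s}\mathrm{d}s}]\uparrow1$ yields $\mathbb{E}[x^{N^{\lambda,(n)}_{t}}]\to\mathbb{E}[x^{N^{\mathrm{min},(n)}_{t}}]$ for $x\in[0,1)$ and $\mathbb{P}(N^{\lambda,(n)}_{t}<\infty)\to\mathbb{P}(N^{\mathrm{min},(n)}_{t}<\infty)$, with the convention $x^{\infty}=0$; combined with monotone convergence on the left this is exactly \eqref{dualityEFCref}. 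The Feller property of $X^{\mathrm{r}}$ then follows as above from the continuity on $[0,1]$ of $x\mapsto\mathbb{E}[X^{\mathrm{r}}_{t}(x)^{n}]$ (continuity at $1$ being the second identity in \eqref{dualityEFCref}), and that $X^{\mathrm{r}}$ extends the minimal process comes from Lemma~\ref{identificationXinfinity}: the generators obey $\mathcal{A}^{\mathrm{s},\lambda}g=\mathcal{A}^{\mathrm{s}}g-\lambda xg'(x)\to\mathcal{A}^{\mathrm{s}}g$ uniformly on $C^{2}_{c}((0,1))$, so $(X^{\mathrm{r}}_{t\wedge\tau_{1}})_{t\geq0}$ solves $(\mathrm{MP})$ and hence, by uniqueness in Lemma~\ref{minimalprocess}, has the law of $(X^{\mathrm{min}}_{t\wedge\tau_{1}})_{t\geq0}$; in particular $\tau_{1}$ has the same law under both and, $1$ being absorbing for $X^{\mathrm{min}}$, Theorem~\ref{thmmomentdual1} gives $\mathbb{P}_{x}(\tau_{1}\leq t)=\mathbb{P}(X^{\mathrm{min}}_{t}(x)=1)=\mathbb{E}[x^{N^{(\infty)}_{t}}]$ for $x\in[0,1)$, with $N^{(\infty)}$ the non‑stopped block counting process started from $\infty$.

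The four correspondences (i)--(iv) then drop out of the two identities $\mathbb{P}_{x}(\tau_{1}\leq t)=\mathbb{E}[x^{N^{(\infty)}_{t}}]$ and $\mathbb{E}[X^{\mathrm{r}}_{t}(1)^{n}]=\mathbb{P}(N^{\mathrm{min},(n)}_{t}<\infty)$ by letting $n\to\infty$ (Lemma~\ref{monotonicity}) and using $x^{\infty}=0$. Indeed, the first identity shows that $1$ is accessible for $X^{\mathrm{r}}$ iff $\mathbb{P}(N^{(\infty)}_{t}<\infty)>0$ for some $t>0$, i.e. iff $N^{(\infty)}$ leaves $\infty$ (equivalently, $\infty$ is not exit‑or‑natural for $N$); and since $N^{\mathrm{min},(n)}_{t}<\infty$ iff $N^{(n)}$ has not exploded by time $t$, letting $n\to\infty$ in the second identity gives $\mathbb{P}(X^{\mathrm{r}}_{t}(1)=1)=1$ for all $t$ when $\infty$ is inaccessible for $N$ (so $1$ is absorbing for $X^{\mathrm{r}}$), and $\mathbb{P}(X^{\mathrm{r}}_{t}(1)=1)<1$ for $t$ large when $\infty$ is accessible for $N$ (so $1$ is non‑absorbing). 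Cross‑tabulating, $\infty$ exit (accessible, $N^{(\infty)}\equiv\infty$) corresponds to $1$ entrance (inaccessible, non‑absorbing); $\infty$ entrance (inaccessible, $N^{(\infty)}$ leaves $\infty$) to $1$ exit (accessible, absorbing); $\infty$ regular non‑absorbing (accessible, $N^{(\infty)}$ leaves $\infty$) to $1$ regular non‑absorbing; and $\infty$ natural (inaccessible, $N^{(\infty)}\equiv\infty$) to $1$ natural. The main obstacle is upstream, in the first step: rigorously constructing $X^{\lambda}$ from the boundary $1$ and establishing its Feller property, and making the $\lambda\downarrow0$ coupling tight — notably controlling the killing clock when $N^{(n)}$ itself explodes on $[0,t]$, i.e. checking $\int_{0}^{t\wedge\zeta_{\infty}}N^{(n)}_{s}\,\mathrm{d}s<\infty$ a.s., or circumventing this through the monotonicity in $\lambda$. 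Once the two dualities and the extension property are secured, (i)--(iv) is careful bookkeeping with the conventions \eqref{convention}.
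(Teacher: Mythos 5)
Your route is the paper's own: perturb the selection by $\mu^{\lambda}=\mu+\lambda\delta_{\infty}$, use Theorem \ref{thmmomentdual1} together with Lemma \ref{key} to see that $X^{\lambda}$ has $1$ as an entrance boundary and is Feller (this is Lemma \ref{recextenslambda}), let $\lambda\downarrow0$, identify the limit with an extension of the minimal process through the martingale problem of Lemma \ref{minimalprocess}, and read off (i)--(iv) from the two dualities exactly as in Lemma \ref{correspond}. The only place you genuinely deviate is the convergence $N^{\lambda,(n)}\to N^{\mathrm{min},(n)}$: you replace the paper's partition-level Poissonian coupling (Lemma \ref{Nlambda}) by a killing-clock representation at rate $\lambda N^{(n)}_s$. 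That variant is workable, and the integrability worry you flag dissolves: on $\{\zeta_{\infty}>t\}$ the path of $N^{(n)}$ is bounded on $[0,t]$, so $\int_0^{t}N^{(n)}_s\,\ddr s<\infty$ and the clock does not ring for small $\lambda$, while on $\{\zeta_{\infty}\le t\}$ both $N^{\lambda,(n)}_t$ and $N^{\mathrm{min},(n)}_t$ equal $\infty$ no matter whether the clock rang, precisely because $\infty$ is an exit for $N^{\lambda}$ (Lemma \ref{key}).

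The genuine gap is in the construction of $X^{\mathrm{r}}$ as a Feller process. You define $X^{\mathrm{r}}_t(x)$ as a monotone a.s.\ limit at each fixed $t$ and assert that the Feller property ``follows from the continuity of $x\mapsto\mathbb{E}[X^{\mathrm{r}}_t(x)^{n}]$''. Continuity of the candidate operators is not the issue: what is missing is that the family $(P^{\mathrm{r}}_t)_{t\geq0}$ satisfies Chapman--Kolmogorov at all, and that there is a c\`adl\`ag Markov process realizing it which is the limit of the $X^{\lambda}$'s in a strong enough (pathwise or Skorokhod) sense to run the martingale-problem identification you borrow from Lemma \ref{identificationXinfinity}. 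Pointwise convergence of Markov semigroups does not transmit the semigroup property to the limit, and this is exactly where the paper's work lies: Lemma \ref{recextensX} first proves the \emph{uniform} convergence $\lVert P_t^{\lambda}g_n-P_t^{\mathrm{r}}g_n\rVert_{\infty}\to0$ (using the a.s.\ coupling $N_t^{\lambda,(n)}\to N_t^{\mathrm{min},(n)}$ and $\zeta_{\infty}^{\lambda}\to\zeta_{\infty}$), deduces the semigroup identity from the contraction property, obtains the Feller property, and only then invokes the Ethier--Kurtz convergence theorem to get convergence in the Skorokhod space before solving $(\mathrm{MP})$ for the stopped limit. Your clock coupling does furnish the a.s.\ inputs such a uniform estimate needs, so the gap is fillable along the paper's lines, but as written this step is asserted rather than proved; the rest of the proposal (entrance property of $X^{\lambda}$, the two displayed identities in \eqref{dualityEFCref}, and the accessibility/absorption bookkeeping for (i)--(iv)) matches the paper's argument.
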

The proof is more involved than for Theorem \ref{thmmomentdual1}, since one has to construct an extension of the minimal solution to \eqref{SDEselec} possibly getting out from $1$. We will construct $(X_t^{\mathrm{r}}, t\geq 0)$ as limit of processes with boundary $1$ entrance and the assumption $\Lambda(\{0\})=0$ will play here a crucial role. The proof of Theorem \ref{thmmomentduality2} is deferred. We establish first several lemmas. 
The strategy  of the proof is the following. The coalescence and splitting measures $\Lambda$ and $\mu$ are fixed and are assumed to verify $\Lambda(\{0\})=0$ and $\mu(\infty)=0$. Recall that $f$ is the generating function of $\mu$.
\begin{itemize}
\item[-] Step 1: Let $\mu$ be a splitting measure with no mass at $\infty$ and $f$ its associated  probability generating function.  We consider a family of $\Lambda$-Wright-Fisher processes with selection, $(X^{\lambda}_t,t\geq 0)$ indexed by $\lambda>0$. Each has its selection mechanism driven by the defective function $f^{\lambda}$ associated to $\mu^{\lambda}=\mu+\lambda\delta_{\infty}$. We then establish a duality relationship between this process $(X^{\lambda}_t,t\geq 0)$ and the block counting process $(N_t^{\lambda},t\geq 0)$ of a simple EFC process whose splitting measure is $\mu^{\lambda}$. We show  that under the assumption $\Lambda(\{0\})=0$, $(X^{\lambda}_t,t\geq 0)$ has boundary $1$ \textit{entrance}.  This is the aim of Lemma \ref{recextenslambda}.
\item[-] Step 2: We study the dual processes $(N_t^{\lambda},t\geq 0)$  as $\lambda$ goes to $0$ and establish that they converge as $\lambda$ goes to $0$ towards $(N_t^{\mathrm{min}},t\geq 0)$, the block counting process with splitting measure $\mu$ and coalescence measure $\Lambda$ that is stopped after it has reached the boundary $\infty$. This is the aim of Lemma \ref{Nlambda}. 
\item[-] Step 3: The convergence of the processes  $(N_t^{\lambda},t\geq 0)$ as $\lambda$ goes to $0$ shown in Step 2 entails the convergence of processes $(X^{\lambda}_t,t\geq 0)$ considered in Step 1. We study the limit process called $(X^{\mathrm{r}}_t,t\geq 0)$, establish the duality relationship \eqref{dualityEFCref} and verify that this is an extension of the minimal $\Lambda$-WF process with selection driven by $f$. This is the aim of Lemma \ref{recextensX}.
\item[-] Step 4: We study the possible behaviors at the boundary $1$ of the process $(X_t^{\mathrm{r}},t\geq 0)$ from the duality \eqref{dualityEFCref}. When the process started from $1$ is degenerate at  $1$, and further $1$ is inaccessible,  we say that the boundary $1$ is natural. In the case that $1$ is accessible, the boundary is an exit. When the process started from $1$ leaves $1$ and never reaches it again, $1$ is an entrance. Finally, when  the process started from $1$, leaves and returns to $1$, we say that the boundary is regular. The correspondences i) to iv) stated in Theorem \ref{thmmomentduality2} are then established. This is done in Lemma \ref{correspond}. 
\end{itemize}


\textbf{Step 1}. Let $\Lambda$ be a coalescence measure with no atom at $0$ nor $1$. Let $\mu$ be a finite measure on $\mathbb{N}$ and denote by $f$ its probability generating function. Fix $\lambda>0$. We denote by $(N_t^{\lambda,(n)},t\geq 0)$ the block counting process started from $n\in \bar{\mathbb{N}}$ with coalescence measure $\Lambda$ and splitting measure $\mu^{\lambda}$ defined such that $\mu^{\lambda}(k)=\mu(k)$ for any $k\in \mathbb{N}$ and $\mu^{\lambda}(\infty)=\lambda$. Let $f^{\lambda}$ be the defective probability generating function associated to $\mu^{\lambda}$. For any $x\in [0,1]$, $f^{\lambda}(x)=\sum_{k=1}^{\infty}x^{k}\frac{\mu(k)}{\lambda+\mu(\mathbb{N})}=\frac{\mu(\mathbb{N})}{\lambda+\mu(\mathbb{N})}f(x)$ and $f^{\lambda}(1)-1=-\frac{\lambda}{\mu(\mathbb{N})+\lambda}<0$. 

Recall the SDE \eqref{SDEselec}. Let $(X^{\lambda}_t,t\geq 0)$ be the minimal process solution to the equation  
\begin{align}\label{sdelambda}X^{\lambda}_t(x)=x+\int_{0}^{t}\int_{0}^{1}\int_{0}^{1}z\left(\mathbbm{1}_{\{v\leq X^{\lambda}_{s-}(x)\}}-X^{\lambda}_{s-}(x)\right)&\bar{\mathcal{M}}(\ddr s,\ddr v, \ddr z)\\ \nonumber 
&+\mu^{\lambda}(\bar{\mathbb{N}})\int_{0}^{t}\big(f^{\lambda}(X^{\lambda}_{s}(x))-X^{\lambda}_{s}(x)\big)\ddr s.
\end{align}

\begin{lemma}
\label{recextenslambda} The process $(X_t^{\lambda}(x),t\geq 0)$ verifies the duality relationship 
\begin{equation}\label{duallambda} \mathbb{E}[X^{\lambda}_t(x)^n]=\mathbb{E}[x^{N_t^{\lambda,(n)}}],
\end{equation}
for any $n\in \bar{\mathbb{N}}$, any $x\in [0,1)$ and $t\geq 0$. Moreover, $(X_t^{\lambda},t\geq 0)$ has $1$ as an entrance boundary, and the entrance law at $1$  is characterized via its moments by \begin{equation}\label{entrancelawlambda}\mathbb{E}[X^{\lambda}_t(1)^n]:=\underset{x\rightarrow 1-}{\lim} \mathbb{E}[x^{N_t^{\lambda,(n)}}]=\mathbb{P}(N_t^{\lambda,(n)}<\infty),\text{ for any } t\geq 0 \text{ and any } n\in \mathbb{N}.
\end{equation} The semigroup of $(X_t^{\lambda}(x),t\geq 0, x\in [0,1])$ satisfying \eqref{duallambda} and \eqref{entrancelawlambda} is Feller.
\end{lemma}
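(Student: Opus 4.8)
The plan is to prove Lemma~\ref{recextenslambda} in four stages, following the scheme laid out in the text: (i) establish the generator-level duality between $(X_t^{\lambda},t\ge 0)$ and $(N_t^{\lambda,(n)},t\ge 0)$; (ii) promote it to a semigroup-level duality using an Ethier--Kurtz type argument, being careful about the defectiveness of $f^\lambda$; (iii) show that $\Lambda(\{0\})=0$ forces boundary $1$ to be an entrance; and (iv) conclude the Feller property using the duality and the convention \eqref{convention}, as was done for Theorem~\ref{thmmomentdual1}.

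\textbf{Duality at the level of generators and semigroups.} First I would observe that $\mu^\lambda$ has a mass at $\infty$ but $f^\lambda$ restricted to the non-defective part is just a constant multiple of $f$, so $\mu^\lambda$ \emph{does not} have a finite second moment unless $\mu$ does. Hence I would not apply Lemma~\ref{dualfinitemoment} directly; instead I would truncate. Using Lemma~\ref{Nnm} applied to the splitting measure $\mu^\lambda$ (equivalently, truncating $\mu$ at level $m$ while keeping the atom $\lambda\delta_\infty$), one gets non-explosive approximating processes $(N^{\lambda,(n)}_m(t),t\ge 0)$ for which Lemma~\ref{dualfinitemoment} (or rather its obvious extension allowing a defective $f$, i.e. a mass at $\infty$ in the splitting measure, which only adds the bounded term $n\mu(\infty)(g(\infty)-g(n))$ to $\mathcal L^f$, matched on the WF side by the term $\mu(\bar{\mathbb N})(f(x)-x)g'(x)$ with $f(1)<1$) gives the duality
\begin{equation}\label{dualmlambda}
\mathbb{E}[x^{N_m^{\lambda,(n)}(t)}]=\mathbb{E}[(X_t^{\lambda,(m)}(x))^{n}],
\end{equation}
where $(X_t^{\lambda,(m)},t\ge 0)$ solves \eqref{SDEselec} with $f$ replaced by the truncated generating function $f_m^\lambda$. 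As in Lemma~\ref{approxXa}, the comparison theorem gives $X_t^{\lambda,(m+1)}\le X_t^{\lambda,(m)}$ and $X_t^{\lambda,(m)}\downarrow X_t^{\lambda,(\infty)}$; and Lemma~\ref{Nnm}(iii) gives $N_m^{\lambda,(n)}(t)\uparrow N_t^{\lambda,(n)}$. Passing to the limit in \eqref{dualmlambda} and identifying $X^{\lambda,(\infty)}$ with the minimal solution $X^\lambda$ exactly as in Lemma~\ref{identificationXinfinity} (the uniform-convergence-of-generators argument goes through verbatim since the jump parts agree and $\mu^\lambda(\bar{\mathbb N})=\mu_m^\lambda(\bar{\mathbb N})$), one obtains \eqref{duallambda} for all $n\in\bar{\mathbb N}$, $x\in[0,1)$, $t\ge 0$.

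\textbf{Boundary $1$ is an entrance.} This is where I expect the main work, and where $\Lambda(\{0\})=0$ is essential. The dual process $(N_t^{\lambda,(n)},t\ge 0)$ is precisely the block counting process of a simple EFC with coalescence measure $\Lambda$ (no Kingman part) and splitting measure $\mu^\lambda$ which has $\mu^\lambda(\infty)=\lambda>0$; by Lemma~\ref{key} (Corollary 1.2-(2) in \cite{cdiEFC}), $\infty$ is an \emph{exit} for this process, i.e. $N_t^{\lambda,(n)}<\infty$ for $t<\zeta_\infty$ and $N_t^{\lambda,(n)}=\infty$ for $t\ge\zeta_\infty$. Letting $x\to 1-$ in \eqref{duallambda} and using the convention $\lim_{x\to1-}x^k=\mathbbm 1_{\{k<\infty\}}$ together with dominated convergence gives
\begin{equation}\label{entrancelimitstep}
\lim_{x\to1-}\mathbb{E}[X_t^\lambda(x)^n]=\mathbb{P}(N_t^{\lambda,(n)}<\infty)=\mathbb{P}(t<\zeta_\infty),
\end{equation}
which is strictly less than $1$ for all $t>0$ (since $\infty$ is an exit, $\zeta_\infty<\infty$ with positive probability and in fact $\mathbb P(\zeta_\infty\le t)\to$ something positive); crucially it is a continuous, nonincreasing function of $t$ equal to $1$ at $t=0$. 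Since $x\mapsto X_t^\lambda(x)$ has left limits, \eqref{entrancelimitstep} together with Lebesgue's theorem shows $X_t^\lambda(1-)$ exists and is a genuine (non-degenerate, $<1$ for $t>0$) random variable whose $n$-th moment is $\mathbb P(N_t^{\lambda,(n)}<\infty)$; defining $X_t^\lambda(1):=X_t^\lambda(1-)$ extends the process to start from $1$, and since $\mathbb P(X_t^\lambda(1)<1)=1$ for every $t>0$ while the process cannot be accessed from the interior (the minimal solution is absorbed at $1$, and the dual statement $\mathbb P_x(\tau_1\le t)=\lim_{n}\mathbb E[x^{N^{\lambda,(n)}_t}]$... ) one checks $1$ is inaccessible from $(0,1)$: indeed $\mathbb{P}_x(\tau_1\le t)=\lim_{n\to\infty}\mathbb E[x^{N_t^{\lambda,(n)}}]=\mathbb E[x^{N_t^{\lambda,(\infty)}}\mathbbm 1_{\{t<\zeta_\infty\}}]$, and since $N^{\lambda,(\infty)}$ explodes (exit boundary) we must have $\mathbb P_x(\tau_1<\infty)=0$. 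Hence $1$ is an entrance, and \eqref{entrancelawlambda} is exactly \eqref{entrancelimitstep}.

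\textbf{Feller property.} Finally, the Feller property of the semigroup of $(X_t^\lambda(x),t\ge 0,x\in[0,1])$ follows the template of Theorem~\ref{thmmomentdual1}/Theorem~\ref{Markovblockcounting}: the maps $x\mapsto \mathbb E[X_t^\lambda(x)^n]=\mathbb E[x^{N_t^{\lambda,(n)}}]$ are continuous on $[0,1)$ by dominated convergence, and continuous at $1$ by \eqref{entrancelimitstep}; the algebra generated by $\{x\mapsto x^n : n\in\mathbb N\}$ separates points of $[0,1]$ and is dense in $C([0,1])$ by Stone--Weierstrass, so the semigroup maps $C([0,1])$ to itself; right-continuity at $t=0$ is immediate since $X^\lambda$ is a càdlàg solution of \eqref{sdelambda} with $X_0^\lambda(x)=x$, and right-continuity of $t\mapsto \mathbb E[x^{N_t^{\lambda,(n)}}]$ likewise. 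This gives the Feller property and completes the proof. The main obstacle, as flagged, is Step~3: turning the exit behaviour of $N^{\lambda,(\infty)}$ (Lemma~\ref{key}) into the existence and non-degeneracy of the left-limit entrance law $X_t^\lambda(1-)$, and simultaneously checking inaccessibility of $1$ from the interior — both of which hinge on reading \eqref{duallambda} in the two limits $x\to1-$ and $n\to\infty$ through the compactification conventions \eqref{convention}.
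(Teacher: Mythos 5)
Your proposal is correct and takes essentially the same route as the paper: moment duality for the minimal process, Lemma \ref{key} to make $\infty$ an exit boundary for $N^{\lambda}$ and hence $1$ inaccessible for $X^{\lambda}$, the limit $x\to 1-$ of \eqref{duallambda} for the entrance law, and Stone--Weierstrass for the Feller property. The only (harmless) redundancy is that you re-derive the duality by truncation, whereas the paper simply invokes Theorem \ref{thmmomentdual1}, which is already stated and proved for splitting measures with a mass at $\infty$ (i.e.\ defective $f$); if one does truncate, the atom at $\infty$ should be folded into the atom at $m$ as in Lemma \ref{Nnm}, so that Lemma \ref{dualfinitemoment} applies verbatim and no ``obvious extension'' to a defective part is needed.
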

\begin{remark} When the measure $\mu$ is a Dirac mass at $\infty$, $\mu=\lambda\delta_{\infty}$, the drift term in the stochastic equation \eqref{sdelambda} reduces to $-\lambda\int_{0}^{t}X_s^{\lambda}(x)\ddr s$. The process $(X_t^{\lambda}(x),t\geq 0)$ corresponds to a $\Lambda$-Wright-Fisher process with no selection but unilateral mutation  from type $a$ to $A$ at rate $\lambda$. A side consequence of Lemma \ref{recextenslambda} is that the boundary $1$ is an entrance (thus inaccessible) for the $\Lambda$-Wright-Fisher process with unilateral mutation when $\Lambda(\{0\})=\Lambda(\{1\})=0$.  \end{remark}
\begin{proof} Let $(X_t^{\lambda,\mathrm{min}},t\geq 0)$ be the process solution to \eqref{sdelambda} that is stopped when reaching boundary $1$. Namely $(X_t^{\lambda,\mathrm{min}},t\geq 0):=(X_{t\wedge \tau_1}^{\lambda},t\geq 0)$. By Theorem \ref{thmmomentdual1}, the process satisfies the identity $\mathbb{E}[X_t^{\lambda,\mathrm{min}}(x)^n]=\mathbb{E}[x^{N_t^{\lambda,(n)}}]$ for any $x\in [0,1)$ and $n\in \mathbb{N}$. Recall that the key Lemma \ref{key} ensures that for any $\lambda>0$, the process $(N^{\lambda}_t,t\geq 0)$ with splitting measure $\mu^{\lambda}$ has $\infty$ as an exit boundary (i.e. $\infty$ is accessible absorbing). Moreover, by Lemma \ref{monotonicity}, $N_t^{\lambda,(\infty)}=\underset{n\rightarrow \infty}{\lim} N_t^{(n)}$ a.s. Therefore $\mathbb{E}[x^{N_t^{\lambda,(\infty)}}]=\mathbb{P}(X_t^{\lambda,\mathrm{min}}(x)=1)=0$ and $1$ is inaccessible for $(X_t^{\lambda,\mathrm{min}},t\geq 0)$ and for $(X_t^{\lambda},t\geq 0)$. Thus, one has $\mathbb{E}[X^{\lambda}_t(x)^n]=\mathbb{E}[x^{N_t^{\lambda,(n)}}]$ and letting $x$ go towards $1$, we see that $$\underset{x\rightarrow 1-}{\lim} \ \mathbb{E}[X^{\lambda}_t(x)^n]=\mathbb{P}(N_t^{\lambda,(n)}<\infty)\in (0,1).$$
This characterizes an entrance law at boundary $1$ for the process $(X_t^{\lambda},t\geq 0)$. We establish the Feller property of the extended semigroup of $(X^{\lambda}_t,t\geq 0)$ on $[0,1]$. Recall $g_n(x)=g_x(n)=x^{n}$.
Plainly by the duality relationship \eqref{duallambda}, if one denotes by $(P^{\lambda}_t)$ the semigroup of $(X_t^{\lambda},t\geq 0)$, we see that $P^{\lambda}_tg_n:x\mapsto \mathbb{E}[X_t^{\lambda}(x)^n]=\mathbb{E}[x^{N_t^{\lambda,(n)}}]$ is continuous on $[0,1)$. Thus, for any polynomial function $h$ on $[0,1]$, $x\mapsto \mathbb{E}[h(X_t^{\lambda}(x))]$ is continuous on $[0,1]$. By  the Weierstrass theorem, if $g\in C([0,1])$, one can find a sequence of polynomial functions $(h_n)$ such that $h_n\underset{n\rightarrow \infty}{\longrightarrow} g $ uniformly. The following routine calculation establishes the continuity of $P^{\lambda}_t g$: for any $x,y\in [0,1]$,
\begin{align*}
&\left\lvert P^{\lambda}_tg(x)-P_t^{\lambda}g(y)\right\lvert\\
&=\left\lvert P_t^{\lambda}g(x)-P_t h_n(x)+P_t^{\lambda}h_n(x)-P_t^{\lambda}h_n(y)+P_t^{\lambda}h_n(y)-P_t^{\lambda}g(y)\right\lvert\\
&\leq 2||g-h_n||_\infty+|P_t^{\lambda}h_n(x)-P_t^{\lambda}h_n(y)|.
\end{align*}
Since $P_t^{\lambda} h_n$ is continuous on $[0,1]$, if one let $x$ tend to $y\in [0,1]$ and then $n$ to $\infty$, we get 
\[\underset{x\rightarrow y}{\limsup}\left\lvert P_t^{\lambda}g(x)-P_t^{\lambda}g(y)\right\lvert\leq ||g-h_n||_\infty\underset{n\rightarrow \infty}{\longrightarrow}0,\]
which allows us to conclude that $P_t^{\lambda}$ maps $C([0,1])$ into $C([0,1])$.   We now check the strong continuity at $0$ of the semigroup $P_t^{\lambda}$. Since it is Feller, it is sufficient to check the pointwise continuity, and by the Weierstrass theorem, we can focus on the functions $g_n:x\mapsto x^{n}$, namely we need to show $\mathbb{E}[X^{r}_t(x)^{n}]\underset{t\rightarrow 0+}{\longrightarrow} x^{n}$. The latter follows readily from the duality \eqref{duallambda} and the right-continuity of $(N_t^{\lambda},t\geq 0)$. \qed
\end{proof}
\textbf{Step 2}: We now study the dual process $(N_t^{\lambda}, t\geq 0) $. Recall that by assumption $\Lambda(\{0\})=0$. 
The key Lemma \ref{key} plays again a central role in the proof of the following lemma.
Recall that $(N^{\mathrm{min},(n)}_t,t\geq 0)$ denotes the block counting process started from $n\in \mathbb{N}$ and absorbed at $\infty$ whenever it reaches it.
\begin{lemma}\label{Nlambda} Let $(N_t^{(n)},t\geq 0)$ be a block counting process with coalescence measure $\Lambda$ (with no mass at $0$) and splitting measure $\mu$ (with no mass at $\infty)$. There exists on the same probability space, block counting processes $(N_t^{\lambda,(n)},t\geq 0,\lambda>0)$ started from $n\in \mathbb{N}$ with splitting measure $\mu^{\lambda}:=\mu+\lambda\delta_\infty$ and same coalescence measure $\Lambda$, such that if $\lambda'\leq \lambda$ then
\begin{equation}\label{orderlambda}N_t^{\lambda',(n)}\leq N_t^{\lambda,(n)} \text{ for all }t\geq 0 \text{ a.s.}
\end{equation}
Almost surely for any $n\in \mathbb{N}$, $\underset{\lambda\rightarrow 0+}\lim \!\!\! \downarrow N_t^{\lambda,(n)}=N_t^{\mathrm{min},(n)} \text{ for all } t\geq 0$
and $\zeta_{\infty}^{\lambda}\underset{\lambda \rightarrow 0}{\longrightarrow} \zeta_{\infty} \text{ a.s. }$
where $\zeta_{\infty}^{\lambda}:=\inf\{t>0: N_t^{\lambda,(n)}=\infty\}$ and $\zeta_{\infty}:=\inf\{t>0: N_{t-}^{(n)}=\infty\}$.
\end{lemma}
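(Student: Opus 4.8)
The plan is to build all the processes $(N_t^{\lambda,(n)},t\ge 0)$ for $n\in\mathbb N$ and $\lambda>0$ on a single probability space via a common Poissonian construction, then extract monotonicity in $\lambda$ and the almost sure limit. First I would set up the coupling: take one Poisson point process $\mathrm{PPP}_C$ on $\mathbb R_+\times\mathcal P_\infty$ with intensity $\ddr t\otimes\mu_{\mathrm{Coag}}(\ddr\pi)$ governing coalescences (driven by $\Lambda$), and for the fragmentations take a single Poisson point process $\mathrm{PPP}_F$ on $\mathbb R_+\times\mathcal P_\infty\times\mathbb N\times\mathbb R_+$ with intensity $\ddr t\otimes\mu_{\mathrm{Frag}}(\ddr\pi)\otimes\#\otimes\ddr u$, where $\mu_{\mathrm{Frag}}$ corresponds to $\mu$ (no mass at $\infty$) and the extra coordinate $u$ is an auxiliary ``mark''. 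The splitting measure $\mu^\lambda=\mu+\lambda\delta_\infty$ is realized by superimposing, independently, a Poisson point process $\mathrm{PPP}_\infty$ on $\mathbb R_+\times\mathbb N\times\mathbb R_+$ with intensity $\ddr t\otimes\#\otimes\ddr u$: an atom $(t,j,u)$ of $\mathrm{PPP}_\infty$ with $u\le\lambda$ triggers, at time $t$ and on the $j$-th block, a fragmentation into blocks of infinite size indexed by $\bar{\mathbb N}$ (i.e.\ the ``mass at $\infty$'' event). Then $(N_t^{\lambda,(n)},t\ge 0)$ is defined as the block counting process of the EFC process built from $\Pi^{(n)}(0)$ using $\mathrm{PPP}_C$, $\mathrm{PPP}_F$, and the thinned process $\{(t,j,u)\in\mathrm{PPP}_\infty:\ u\le\lambda\}$; as $\lambda\downarrow 0$ fewer of these ``$\infty$-fragmentations'' are retained, and at $\lambda=0$ none are, giving exactly the process with splitting measure $\mu$.

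The monotonicity \eqref{orderlambda} then follows from the comparison/monotone coupling machinery of \cite{cdiEFC} (Lemma \ref{monotonicity} and the Poissonian construction recalled in Section \ref{backgroundEFC}): since $\lambda'\le\lambda$ means the set of retained fragmentation atoms for $\lambda'$ is contained in that for $\lambda$, and each extra fragmentation event only increases the number of blocks (and, by the usual ``following-the-blocks'' argument, this domination propagates through all subsequent coalescence and fragmentation events because fewer blocks at a coalescence time can only lead to fewer blocks afterward, while the coalescence atoms are shared), one obtains $N_t^{\lambda',(n)}\le N_t^{\lambda,(n)}$ for all $t\ge 0$ almost surely. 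This is the place where one should be slightly careful: the naive statement ``more fragmentations $\Rightarrow$ more blocks'' is not literally monotone path-by-path because a later shared coalescence atom $(t,\pi^c)$ acts on the first $\#\Pi(t-)$ blocks and merges a random (binomially distributed given $z$) number of them, so one must check that the coupling in \cite[Lemma 3.3, Lemma 3.4]{cdiEFC} is set up so that the larger configuration stays larger after such an operation. I expect this verification to be routine given that the authors have already established exactly this kind of monotone coupling in \cite{cdiEFC}; one simply applies it with the enlarged splitting measure.

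For the limit, since $\lambda\mapsto N_t^{\lambda,(n)}$ is (by \eqref{orderlambda}) non-increasing in $\lambda$ and $\bar{\mathbb N}$-valued, the limit $\lim_{\lambda\to 0+}\downarrow N_t^{\lambda,(n)}$ exists almost surely for every $t$; call it $M_t$. Before the first $\infty$-fragmentation event affecting the first $n$ blocks under $\lambda$, the path of $N^{\lambda,(n)}$ coincides with that of $N^{(n)}$ driven only by $\mathrm{PPP}_C$ and $\mathrm{PPP}_F$; let $T^\lambda$ be the time of that first event. As $\lambda\downarrow 0$ the intensity of retained $\infty$-atoms on any fixed finite set of blocks tends to $0$, so $T^\lambda\to\infty$ almost surely on the event $\{\zeta_\infty=\infty\}$, and more precisely $T^\lambda\wedge\zeta_\infty\to\zeta_\infty$; here one uses the key Lemma \ref{key}, which guarantees that for each fixed $\lambda>0$ the process $N^{\lambda,(\infty)}$ has $\infty$ as an exit so that $\zeta_\infty^\lambda<\infty$ a.s.\ and the relevant comparison makes sense, and that up to $\zeta_\infty$ the finite-$n$ process has only finitely many blocks so only finitely many fragmentation atoms of $\mathrm{PPP}_F$ have fired. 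Consequently, for $t<\zeta_\infty$ and $\lambda$ small enough (depending on $\omega$ and $t$) one has $N_t^{\lambda,(n)}=N_t^{(n)}=N_t^{\mathrm{min},(n)}$, while for $t\ge\zeta_\infty$ one has $N_t^{\mathrm{min},(n)}=\infty$ and, since each $N_t^{\lambda,(n)}\ge N_t^{\mathrm{min},(n\wedge\cdot)}$... more simply, $N_{\zeta_\infty-}^{(n)}=\infty$ forces $N_t^{\lambda,(n)}=\infty$ for $t\ge\zeta_\infty^\lambda$ and $\zeta_\infty^\lambda\ge\zeta_\infty$ (adding fragmentations can only speed up explosion, but also the shared coalescence/fragmentation dynamics already explode at $\zeta_\infty$), so $M_t=\infty=N_t^{\mathrm{min},(n)}$ there too. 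This gives $M_t=N_t^{\mathrm{min},(n)}$ for all $t\ge 0$ a.s. Finally $\zeta_\infty^\lambda\to\zeta_\infty$ follows: $\zeta_\infty^\lambda\le\zeta_\infty$ by monotonicity (more blocks explode no later), so $\lim_{\lambda\to0}\zeta_\infty^\lambda\le\zeta_\infty$, while the reverse inequality $\liminf_{\lambda\to0}\zeta_\infty^\lambda\ge\zeta_\infty$ comes from the pointwise convergence $N_t^{\lambda,(n)}\to N_t^{\mathrm{min},(n)}<\infty$ for every fixed $t<\zeta_\infty$, which forbids $\zeta_\infty^\lambda\le t$ for small $\lambda$.

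I expect the main obstacle to be the careful bookkeeping in the coupling: one must construct the $\infty$-fragmentation atoms so that (a) for each fixed $\lambda$ the resulting process has exactly the law of the block counting process with splitting measure $\mu^\lambda$ (this needs the thinning $u\le\lambda$ to have intensity $\lambda$ and to act as a genuine fragmentation into infinitely many infinite blocks), and (b) the monotone coupling of \cite{cdiEFC} remains valid when the larger configuration additionally undergoes these extra fragmentations while sharing all the $\mathrm{PPP}_C$ and ordinary-$\mathrm{PPP}_F$ atoms. Both points are present in spirit in \cite{cdiEFC} (the coupling in the initial number of blocks and Lemma \ref{key}), so the argument is a matter of assembling those pieces rather than proving anything essentially new; the delicate step, worth writing out, is why a shared coalescence atom preserves the ordering $N^{\lambda',(n)}\le N^{\lambda,(n)}$.
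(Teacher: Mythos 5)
Your proposal is essentially the paper's argument: couple all the $N^{\lambda,(n)}$ on one probability space by superimposing on the common $\mathrm{PPP}_C$ and $\mathrm{PPP}_F$ a family of Poisson processes of ``$\infty$-fragmentation'' atoms whose effective rate is $\lambda$ per block, then identify the limit as $\lambda\downarrow 0$. The only cosmetic difference is how that family is built: you thin a marked Poisson process by $\{u\leq\lambda\}$ (which gives genuinely nested atom sets), whereas the paper takes images of a single unit-rate $\mathrm{PPP}^{1}$ with paint-box marks $\rho_{\mathrm{m}}$ under time-scaling, which only gives monotonicity of the first atom time; both devices serve the same purpose. The one step you flag as delicate --- whether a shared coalescence atom preserves the ordering $N^{\lambda',(n)}\leq N^{\lambda,(n)}$ --- is in fact a non-issue, and you have all the ingredients to see why: by Lemma \ref{key} (here is where $\Lambda(\{0\})=0$ enters), each $N^{\lambda,(n)}$ has $\infty$ as an exit, so the first \emph{effective} $\infty$-atom sends the process to $\infty$ permanently. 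Hence $N^{\lambda',(n)}$ and $N^{\lambda,(n)}$ are not merely ordered but \emph{identical} (both equal to the $\mu$-driven process $N^{(n)}$) up to the first effective $\infty$-atom of the richer process, and after that time the larger one sits at $\infty$; no propagation of an inequality through coalescence events is ever needed. With that observation your argument for \eqref{orderlambda}, for the identification of the decreasing limit with $N^{\mathrm{min},(n)}$, and for $\zeta_\infty^{\lambda}\to\zeta_\infty$ closes completely and matches the paper's proof.
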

\begin{proof}
We work at the level of the partition-valued processes. 
Recall the Poisson construction of the process $(\Pi(t),t\geq 0)$ explained in Section \ref{backgroundEFC}.  
Let $\mathrm{PPP}_F$ and $\mathrm{PPP}_C$ be the Poisson point processes governing respectively fragmentations and coalescences. Recall the fragmentation measure $\mu_{\mathrm{Frag}}$, and that the latter is finite by assumption. Let $\mathrm{m}:=(m_1,m_2,\ldots)$ be a positive sequence such that $\sum_{i=1}^{\infty}m_i=1$ and $m_i>m_{i+1}>0$ for all $i\geq 1$. Denote by $\rho_{\mathrm{m}}$ the law of the paint-box partition whose ranked asymptotic frequencies are given by $\mathrm{m}$ and note that $\#\pi=\infty$, for $\rho_{\mathrm{m}}$-almost every partition $\pi$. Consider now $\mathrm{PPP}^{1}:=\sum_{i\geq 1}\delta_{(t^{1}_i,\pi_i,j)}$  an independent Poisson point process, with intensity $\ddr t\otimes \rho_{\mathrm{m}}\otimes \#$, where we recall that $\#$ is the counting measure. Let $\big(\mathrm{PPP}^{\lambda}, \lambda>0\big)$ be the images of $\mathrm{PPP}^{1}$ by the map $t\mapsto \lambda t$. They are Poisson point processes with intensity $\lambda \ddr t\otimes \rho_{\mathrm{m}}\otimes \#$, such that if $\lambda\geq \lambda'$, then $t_1^{\lambda}\leq t_1^{\lambda'}$ almost surely where $t_1^{\lambda}$ denotes the first atom of time of $\mathrm{PPP}^{\lambda}$. Let $(\Pi^{\lambda}(t),t\geq 0)$ for $\lambda>0$ be the simple EFC processes  built from the Poisson point process $\mathrm{PPP}^{\lambda}_{F}=\mathrm{PPP}_F+\mathrm{PPP}^{\lambda}$ and $\mathrm{PPP}_C$. For any $\lambda>0$, the fragmentation measure is $\mu^{\lambda}_{\mathrm{Frag}}:=\mu_{\mathrm{Frag}}+\lambda \rho_{\mathrm{m}}$, and thus the splitting measure is $\mu^{\lambda}$ such that $\mu^{\lambda}(k))=\mu(k)$ for all $k\in \mathbb{N}$ and $\mu^{\lambda}(\infty)=\lambda \rho_{\mathrm{m}}(\pi; \#\pi=\infty)=\lambda$. 
Recall $(\Pi^{\lambda,(n)}(t),t\geq 0)$ defined in Section \ref{backgroundEFC}. Set $(N_t^{\lambda,(n)},t\geq 0):=(\#\Pi^{\lambda,(n)}(t),t\geq 0)$ and $\zeta_{\infty}^{\lambda}:=\inf\{t>0: N_{t-}^{\lambda,(n)}\text{ or } N_{t}^{\lambda,(n)}=\infty\}$.  Recall Lemma \ref{key}. All processes $N^{\lambda}$ have boundary $\infty$ exit and we see by construction that for any $\lambda>\lambda'>0$  the processes $(N_t^{\lambda,(n)},t\geq 0)$ satisfy $N^{\lambda,(n)}_t\geq N_t^{\lambda',(n)}$ for any $t$ almost surely and $\zeta_{\infty}^{\lambda}\leq \zeta_{\infty}^{\lambda'}$ a.s.
For any $t\geq 0$, set $N_t^{0}:=\underset{\lambda \rightarrow 0}{\lim}\downarrow N_t^{\lambda}$ a.s. Plainly for any $\lambda>0$, $\zeta_{\infty}:=\inf\{t>0: N_{t-}^{(n)}=\infty\}\geq \zeta^{\lambda}$ a.s. Hence, on the event $\{\zeta_{\infty}^{0}<\infty\}$, for any $t>0$, $t+\zeta_{\infty}^{0}\geq t+\zeta_{\infty}^{\lambda}$ and  Lemma \ref{key} yields that $N^{\lambda,(n)}_{t+\zeta_{\infty}^{0}}=\infty$ for all $t\geq 0$ a.s. Thus
\[N^{0,(n)}_{t+\zeta_{\infty}^{0}}=\underset{\lambda \rightarrow 0+}{\lim} N^{\lambda,(n)}_{t+\zeta_{\infty}^{0}}=\infty \text{ a.s.}\]
The process $(N_t^{0,(n)},t\geq 0)$ is therefore absorbed at infinity whenever it reaches it. We show that $\zeta^0_{\infty}:=\underset{\lambda \rightarrow 0}{\lim} \uparrow \zeta_{\infty}^{\lambda}=\zeta_{\infty}$ a.s. As noticed before, $\zeta_{\infty}^{\lambda}\leq \zeta_{\infty}$ for any $\lambda>0$. Hence $\zeta_{\infty}^0\leq  \zeta_{\infty}$. We work now on the event $\{\zeta_{\infty}^0<\infty\}$. Assume that $N_{\zeta_{\infty}^0}^{0,(n)}<\infty$. Since $N_t^{\lambda,(n)} \underset{\lambda \rightarrow 0}{\longrightarrow} N_t^{0,(n)}$ for any $t\geq 0$ a.s, if $N_{\zeta_{\infty}^0}^{0,(n)}<\infty$, then there exists $\lambda>0$ small enough such that $N^{\lambda}_{\zeta_{\infty}^{0},(n)}=N_{\zeta_{\infty}^0}^{0,(n)}<\infty$. This entails the contradiction $\zeta_{\infty}^{\lambda}>\zeta_{\infty}^{0}$ a.s. Hence, $N_{\zeta_{\infty}^0}^{0,(n)}=\infty$ a.s and since $\zeta_{\infty}^0\leq  \zeta_{\infty}$, we have that $\zeta_{\infty}^0=\zeta_{\infty}$ a.s. 
Observe finally that by construction, $(N_t^{0,(n)},t<\zeta^{0}_{\infty})$ has the same dynamics as the block counting process $(N_t^{(n)},t<\zeta_{\infty})$ whose splitting measure is $\mu$ and coalescence measure is $\Lambda$. By the uniqueness of the minimal continuous-time Markov chain with generator $\mathcal{L}$, $(N_t^{0,(n)},t\geq 0)$ and the stopped process $(N_t^{\mathrm{min},(n)},t\geq 0):=(N_{t\wedge \zeta_{\infty}}^{(n)},t\geq 0)$ coincide.\qed
\end{proof}
We study now some extensions of the minimal process, solution to the SDE \eqref{SDEselec}, whose drift term satisfies $f(1)=1$. We define extensions  by looking at the limit arising in the processes $(X^{\lambda},\lambda>0)$ when the mutation rate $\lambda$ gets very low.
\begin{lemma}[Extension of $(X^{\mathrm{min}}_t,t\geq 0)$ after reaching $1$]\label{recextensX} The Markov processes $(X^{\lambda}_t(x),t\geq 0, x\in [0,1])$ converge as $\lambda$ goes to $0$, in the Skorokhod space towards a Feller process $(X_t^{\mathrm{r}}(x),t\geq 0,x\in [0,1])$ valued in $[0,1]$, which extends the minimal solution of \eqref{SDEselec}, and whose semigroup satisfies : for any $n\in \mathbb{N}$ 
\begin{equation}\label{dualabso} \mathbb{E}[X^{\mathrm{r}}_t(x)^n]=\mathbb{E}[x^{N^{\mathrm{min},(n)}_t}] 
\text{ for any } x\in [0,1),
\end{equation}
\text{ and } 
\begin{equation}\label{entrancelawXr}
\mathbb{E}[X^{\mathrm{r}}_t(1)^n]:=\underset{x\rightarrow 1-}{\lim}\mathbb{E}[x^{N^{\mathrm{min},(n)}_t}]=\mathbb{P}_n(\zeta_\infty>t).
\end{equation} 
\end{lemma}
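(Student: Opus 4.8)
The plan is to build, on one probability space carrying the Poisson measure $\mathcal{M}$, the whole family $(X_t^{\lambda}(x),t\ge 0)_{\lambda>0,\,x\in[0,1)}$ as minimal solutions of \eqref{sdelambda}, coupled monotonically in $\lambda$, and then to set $X_t^{\mathrm{r}}(x):=\lim_{\lambda\downarrow 0}X_t^{\lambda}(x)$. Since the boundary $1$ is inaccessible for $X^{\lambda}$ started from $x<1$ (Lemma \ref{recextenslambda}), the minimal solution of \eqref{sdelambda} cannot end its life except by hitting $0$, hence is defined for all times and coincides with the Feller process $X^{\lambda}(x)$ of Lemma \ref{recextenslambda}. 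For the coupling, write the drift of \eqref{sdelambda} as $b_{\lambda}(x):=\mu(\mathbb{N})f(x)-(\lambda+\mu(\mathbb{N}))x$, note that $b_{\lambda}\le b_{\lambda'}$ whenever $\lambda\ge\lambda'$, truncate with a single cut-off $b_{\lambda}^{n}:=\phi_{n}b_{\lambda}$ (with $0\le\phi_{n}\le 1$, $\phi_{n}\equiv 1$ on $[0,1-1/n]$ and $\phi_{n}\equiv 0$ near $1$, so that $b_{\lambda}^{n}$ is Lipschitz and $b_{\lambda}^{n}\le b_{\lambda'}^{n}$), and apply the Dawson--Li comparison theorem as in the proof of Lemma \ref{minimalprocess}: the solutions driven by the common $\mathcal{M}$ satisfy $X_t^{\lambda,n}(x)\le X_t^{\lambda',n}(x)$ for all $t$ a.s., whence, letting $n\to\infty$, $X_t^{\lambda}(x)\le X_t^{\lambda'}(x)$ for all $t$ a.s. Comparison in the starting point also makes $x\mapsto X_t^{\lambda}(x)$ nondecreasing, and we set $X_t^{\mathrm{r}}(1):=\lim_{x\uparrow 1}X_t^{\mathrm{r}}(x)$.

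Next I would pass to the limit in the duality \eqref{duallambda}. For $x\in[0,1)$ and $n\in\mathbb{N}$, monotone convergence on the left and Lemma \ref{Nlambda} together with the convention $x^{\infty}=0$ on the right give
\begin{align*}
\mathbb{E}[X_t^{\mathrm{r}}(x)^{n}]=\lim_{\lambda\downarrow 0}\uparrow\mathbb{E}[X_t^{\lambda}(x)^{n}]=\lim_{\lambda\downarrow 0}\uparrow\mathbb{E}[x^{N_t^{\lambda,(n)}}]=\mathbb{E}[x^{N_t^{\mathrm{min},(n)}}],
\end{align*}
which is \eqref{dualabso}. At $x=1$, the entrance law \eqref{entrancelawlambda} and the identity $\{N_t^{\lambda,(n)}<\infty\}=\{\zeta_{\infty}^{\lambda}>t\}\uparrow\{\zeta_{\infty}>t\}$, valid because $\zeta_{\infty}^{\lambda}\uparrow\zeta_{\infty}$ by Lemma \ref{Nlambda}, yield $\mathbb{E}[X_t^{\mathrm{r}}(1)^{n}]=\lim_{\lambda\downarrow 0}\mathbb{P}(N_t^{\lambda,(n)}<\infty)=\mathbb{P}_n(\zeta_{\infty}>t)$, while dominated convergence together with $\lim_{x\uparrow 1}x^{k}=\mathbbm{1}_{\{k<\infty\}}$ identifies this with $\lim_{x\uparrow 1}\mathbb{E}[x^{N_t^{\mathrm{min},(n)}}]$; this is \eqref{entrancelawXr}. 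In particular $\mathbb{E}[X_t^{\mathrm{r}}(x)^{n}]\to\mathbb{E}[X_t^{\mathrm{r}}(1)^{n}]$ as $x\uparrow 1$.

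For the Feller property and the Skorokhod convergence I would follow the scheme used in the proof of Lemma \ref{recextenslambda}. Writing $P_t^{\lambda}$ for the semigroup of $X^{\lambda}$ and setting $P_t^{\mathrm{r}}g_n(x):=\mathbb{E}[x^{N_t^{\mathrm{min},(n)}}]$ for $x\in[0,1)$ and $P_t^{\mathrm{r}}g_n(1):=\mathbb{P}_n(\zeta_{\infty}>t)$, one splits on $\{\zeta_{\infty}\le t\}$, where for $x<1$ both $x^{N_t^{\lambda,(n)}}$ and $x^{N_t^{\mathrm{min},(n)}}$ vanish (since $\zeta_{\infty}^{\lambda}\le\zeta_{\infty}$), and on $\{\zeta_{\infty}>t\}$, where $\zeta_{\infty}^{\lambda}\uparrow\zeta_{\infty}>t$ forces the $\mathbb{N}$-valued variable $N_t^{\lambda,(n)}$ to equal $N_t^{(n)}$ for $\lambda$ small; this gives $\|P_t^{\lambda}g_n-P_t^{\mathrm{r}}g_n\|_{\infty}\to 0$, and by Weierstrass approximation and $\|P_t^{\lambda}\|\le 1$ it extends to $\|P_t^{\lambda}g-P_t^{\mathrm{r}}g\|_{\infty}\to 0$ for all $g\in C([0,1])$. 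Passing to the limit then shows that $P_t^{\mathrm{r}}$ maps $C([0,1])$ into $C([0,1])$ (continuity at $1$ being precisely \eqref{entrancelawXr}), is a conservative positive contraction with the semigroup property, and is strongly continuous at $0$ (from right-continuity of $N^{\mathrm{min},(n)}$); hence $P^{\mathrm{r}}$ is Feller. Letting $(X_t^{\mathrm{r}}(x))$ be the associated Feller process, convergence of the semigroups yields $X^{\lambda}\Rightarrow X^{\mathrm{r}}$ in $D([0,\infty),[0,1])$; its one-dimensional marginals are those computed above, so \eqref{dualabso} and \eqref{entrancelawXr} hold, and arguing as in the proof of Theorem \ref{Markovblockcounting} (duality against the Markov chain $N^{\mathrm{min},(n)}$) one sees that $X^{\mathrm{r}}$ is Markov and coincides with the pathwise monotone limit $\lim_{\lambda\downarrow 0}X^{\lambda}$.

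Finally, that $X^{\mathrm{r}}$ extends the minimal solution of \eqref{SDEselec} I would deduce from the stopped martingale problem $(\mathrm{MP})$ of Lemma \ref{minimalprocess}. Each $X^{\lambda}$ solves $(\mathrm{MP})$ for the generator $\mathcal{A}^{\mathrm{s},\lambda}$ with drift $b_{\lambda}$, stopped at $\tau^{\lambda}:=\tau_0^{\lambda}$ (recall $\tau_1^{\lambda}=\infty$), and since $\mathcal{A}^{\mathrm{s},\lambda}g(x)-\mathcal{A}^{\mathrm{s}}g(x)=-\lambda x g'(x)$ one has $\|\mathcal{A}^{\mathrm{s},\lambda}g-\mathcal{A}^{\mathrm{s}}g\|_{\infty}\le\lambda\|xg'\|_{\infty}\to 0$ uniformly over $g\in C_c^{2}([0,1])$; moreover the monotone coupling gives $\tau_0^{\lambda}\uparrow\tau_0^{\mathrm{r}}$. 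Passing to the limit in the martingale identity tested against bounded continuous functionals of the path, exactly as in the proof of Lemma \ref{identificationXinfinity} (using $X^{\lambda}\Rightarrow X^{\mathrm{r}}$, Lebesgue's theorem and the uniform convergence of the generators), shows that $X^{\mathrm{r}}$ stopped at $\tau^{\mathrm{r}}:=\inf\{t:X_t^{\mathrm{r}}\notin(0,1)\}$ solves $(\mathrm{MP})$, and then the uniqueness part of Lemma \ref{minimalprocess} identifies $(X_t^{\mathrm{r}}(x),t<\tau^{\mathrm{r}})$ with the minimal process. I expect the two delicate points to be the uniform-in-$x$ estimate $\|P_t^{\lambda}g_n-P_t^{\mathrm{r}}g_n\|_{\infty}\to 0$ near $x=1$, which genuinely uses $\zeta_{\infty}^{\lambda}\uparrow\zeta_{\infty}$ and the integer-valuedness of the $N_t^{\lambda,(n)}$, and the bookkeeping matching the stopping times $\tau_0^{\lambda}$ with $\tau^{\mathrm{r}}$ (and the admissible test functions) in the last step.
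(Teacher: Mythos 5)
Your proposal is correct and follows essentially the same route as the paper: pass to the limit $\lambda\downarrow 0$ in the duality \eqref{duallambda} using Lemma \ref{Nlambda}, prove uniform semigroup convergence by splitting on the explosion events and using integer-valuedness of $N_t^{\lambda,(n)}$, deduce the Feller property and Skorokhod convergence, and identify the stopped limit with the minimal process via uniform convergence of the generators and uniqueness of the martingale problem $(\mathrm{MP})$. The only cosmetic difference is that you front-load the almost sure monotone coupling in $\lambda$ (the paper's Lemma \ref{almostsureinx}, proved afterwards there via the Lipschitz truncations $f_m^{\lambda}$ of $\mu$ rather than your spatial cut-off of the drift), which does not change the substance of the argument.
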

\begin{remark}\label{remwhynotkingman} 
In order to establish Lemma \ref{recextensX}, we require the assumption that the measure $\Lambda$ gives no mass to $0$. We shall indeed use the fact that when there is no Kingman component, the processes $N^{\lambda}$ have all their boundary $\infty$ as exit. This is not the case when $\Lambda(\{0\})=c_{\mathrm{k}}>0$ for which processes $N^{\lambda}$ may have boundary $\infty$ regular, see Remark \ref{remkey}.
\end{remark}

\begin{proof}
In order to ease the reading we outline the scheme of the proof. The strategy is similar to that in the proof of Lemma \ref{recextenslambda}
\begin{enumerate}
\item We  first show that for any fixed $t\geq 0$, the random variables $(X_t^{\lambda}(x),x \in[0,1])$  converge in law as $\lambda$ goes to $0$ through convergence of their moments.
\item We establish that the semigroup of $(X^{\lambda}_t(x),t\geq 0, x\in [0,1])$ converges as $\lambda$ goes to $0$ uniformly. We denote by $P_t^{\mathrm{r}}$ the limiting operator and verifies that it is a Feller semigroup. The associated process is denoted by $(X^{\mathrm{r}}_t(x),t\geq 0,x\in [0,1])$. 
%
\item We then show that $(X^{\mathrm{r}}_t(x),t\geq 0,x\in [0,1])$ extends the minimal solution to \eqref{SDEselec} after explosion at $1$.
\end{enumerate}
(1) 
By letting $\lambda$ go towards $0$ in the duality relationship \eqref{duallambda}, and recalling the almost sure convergence of $N_t^{\lambda,(n)}$ towards $N_t^{\mathrm{min},(n)}$, we see that  $\underset{\lambda\rightarrow 0}{\lim}\ \mathbb{E}[X_t^{\lambda}(x)^{n}]=\mathbb{E}[x^{N_t^{\mathrm{min},(n)}}]$. Recall that the convergence in law of random variables valued in $[0,1]$ is characterized by the convergence of the entire moments. Therefore, the $X_t^{\lambda}(x)$'s are converging in law as $\lambda$ goes to $0$, and the limit law is characterized by the sequence of its entire moments, $\left(\mathbb{E}[x^{N_t^{\mathrm{min},(n)}}],n\geq 0\right)$. For all $x\in [0,1]$ we denote by $X_t^{\mathrm{r}}(x)$ the random variable valued in $[0,1]$ such that
$\underset{\lambda\rightarrow 0}{\lim}\ \mathbb{E}[(X_t^{\lambda}(x))^{n}]=\mathbb{E}[(X^{\mathrm{r}}_t(x))^{n}]=\mathbb{E}[x^{N_t^{\mathrm{min},(n)}}].$

(2) Recall that by Lemma \ref{recextenslambda}, $(X^{\lambda}_t(x),t\geq 0)$ is a Feller process and that we denote its semigroup by $(P_t^{\lambda},t\geq 0)$.
Let $g_n(x)=x^{n}$ for any $x\in [0,1]$ and $n\in \mathbb{N}$.
By the duality relationship \eqref{duallambda}, 
$P_t^{\lambda}g_n(x)=\mathbb{E}[x^{N^{(n),\lambda}_{t}}]$.
We check that 
\begin{equation}\label{unifconvsemigroup}||P_t^{\lambda}g_n-P_t^{\mathrm{r}}g_n||_\infty=\sup_{x\in [0,1]}\mathbb{E}[x^{N_t^{\mathrm{min},(n)}}-x^{N^{(n),\lambda}_{t}}]\underset{\lambda \rightarrow 0}{\longrightarrow} 0.
\end{equation}
Arguments are adapted from those in \cite[Section 7]{zbMATH07055671}.  
For any $x\in [0,1]$,
\begin{align*}
\mathbb{E}[x^{N_t^{\mathrm{min},(n)}}-x^{N^{(n),\lambda}_t}]&=\mathbb{E}\left[\left(x^{N^{\mathrm{min},(n)}_{t}}-x^{N_t^{\lambda,(n)}}\right)\mathrm{1}_{\{N_t^{\mathrm{min},(n)}\leq N_t^{\lambda,(n)}<\infty\}}\right]\\
&\qquad +\mathbb{E}\left[\left(x^{N^{\mathrm{min},(n)}_{t}}-x^{N_t^{\lambda,(n)}}\right)\mathrm{1}_{\{N_t^{\mathrm{min},(n)}<\infty, N_t^{\lambda,(n)}=\infty\}}\right]\\
&\leq \mathbb{E}\left[\left(x^{N^{\mathrm{min},(n)}_{t}}-x^{N_t^{\lambda,(n)}}\right)\mathrm{1}_{\{N_t^{\mathrm{min},(n)}\leq N_t^{\lambda,(n)}<\infty\}}\right]+2\mathbb{P}_n(\zeta_{\infty}>t\geq \zeta_{\infty}^{\lambda}).
\end{align*}
Recall that by Lemma \ref{Nlambda}, $\zeta_{\infty}^{\lambda}\underset{\lambda \rightarrow 0+}{\longrightarrow} \zeta_{\infty}$ a.s, thus $\mathbb{P}_n(\zeta_{\infty}>t\geq \zeta_{\infty}^{\lambda})\underset{\lambda \rightarrow 0}{\longrightarrow} 0$. It remains to study the uniform convergence on the event $\{N_t^{\mathrm{min},(n)}\leq N_t^{\lambda,(n)}<\infty\}$.
Plainly, 
\begin{align}\label{upperboundunif}
\underset{x\in [0,1]}{\sup}& \mathbb{E}\left[\left(x^{N^{\mathrm{min},(n)}_{t}}-x^{N_t^{\lambda,(n)}}\right)\mathrm{1}_{\{N_t^{\mathrm{min},(n)}\leq N_t^{\lambda,(n)}<\infty\}}\right]\nonumber\\
&\leq \mathbb{E}\left[\underset{x\in [0,1]}{\sup}\left(x^{N^{\mathrm{min},(n)}_{t}}-x^{N_t^{\lambda,(n)}}\right)\mathrm{1}_{\{N_t^{\mathrm{min},(n)}\leq N_t^{\lambda,(n)}<\infty\}}\right].
\end{align}
Recall that $N_t^{\lambda,(n)} \underset{\lambda \rightarrow 0}{\longrightarrow}
N^{\mathrm{min},(n)}_t$ a.s. On the event $\{N_t^{\mathrm{min},(n)}\leq N_t^{\lambda,(n)}<\infty\}$, since both random variables $N_t^{\mathrm{min},(n)}$ and $N_t^{\lambda,(n)}$ are integer-valued, there exists almost surely a small enough $\lambda_0>0$ such that for all $\lambda<\lambda_0$, $N_t^{\lambda,(n)}=N_t^{\mathrm{min},(n)}$.
Thus the upper bound \eqref{upperboundunif} vanishes for $\lambda\leq \lambda_0$. Finally
\[\underset{\lambda \rightarrow 0}{\limsup} \underset{x\in [0,1]}{\sup}\mathbb{E}\left[\left(x^{N^{\mathrm{min},(n)}_{t}}-x^{N_t^{\lambda,(n)}}\right)\mathrm{1}_{\{N_t^{\mathrm{min},(n)}\leq N_t^{\lambda,(n)}<\infty\}}\right]=0,\]
and the convergence in \eqref{unifconvsemigroup} is established. To see that the uniform convergence holds for any function $f\in C([0,1])$, one argues by the Stone-Weierstrass theorem as follows. Let $f\in C([0,1])$ and $(f_n)$ be a sequence of polynomial functions such that $||f_n-f||_\infty\underset{n\rightarrow \infty}{\longrightarrow} 0$. Then, 
\begin{align*}||P_t^{\lambda}f-P_t^{\mathrm{r}}f||_\infty &\leq ||P_t^{\lambda}f-P_t^{\lambda}f_n||_\infty+||P_t^{\lambda}f_n-P_t^{\mathrm{r}}f_n||_\infty+||P_t^{\mathrm{r}}f_n-P_t^{\mathrm{r}}f_n||_\infty\\
&\leq 2||f-f_n||_\infty+||P_t^{\lambda}f_n-P_t^{\mathrm{r}}f_n||_\infty.
\end{align*}
By letting $\lambda$ go towards $0$, we see that
\[\underset{\lambda \rightarrow 0+}{\limsup} ||P_t^{\lambda}f-P_t^{\mathrm{r}}f||_\infty\leq  2||f-f_n||_\infty\]
and one concludes by letting $n$ go to $\infty$. We now deduce that $(P_t^{\mathrm{r}})$ is a Feller semigroup. As previously, the Stone-Weierstrass theorem asserts that it suffices to establish the semigroup property for the functions $g_n$. Let $f:=P_s^{\mathrm{r}}g_n$. For any $n\geq 0$,
\begin{align}\label{semigroupproperty}
&||P_{t+s}^{\mathrm{r}}g_n-P_{t}^{\mathrm{r}}P_{s}^{\mathrm{r}}g_n||_\infty \nonumber\\
&\leq ||P_{t+s}^{\mathrm{r}}g_n-P_{t+s}^{\lambda}g_n||_\infty +||P_{t}^{\lambda}P_{s}^{\lambda}g_n-P_{t}^{\lambda}P_{s}^{\mathrm{r}}g_n||_\infty+||P_{t}^{\lambda}P_s^{\mathrm{r}}f-P_{t}^{\mathrm{r}}P_s^{\mathrm{r}}||_\infty \nonumber \\
&\leq ||P_{t+s}^{\mathrm{r}}g_n-P_{t+s}^{\lambda}g_n||_\infty+||P_{s}^{\lambda}g_n-P_{s}^{\mathrm{r}}g_n||_\infty+||P_{t}^{\lambda}f-P_{t}^{\mathrm{r}}f||_\infty,
\end{align}
where we have used  the fact that $P_t^{\lambda} $ is a contraction. The upper bound in \eqref{semigroupproperty} goes towards $0$ as $\lambda$ goes to $0$, and the semigroup property is established. The Feller property follows from the same argument as in the proof of Lemma \ref{recextenslambda}.
The fact that the convergence of the sequence $(X_t^{\lambda},t\geq 0,\lambda>0)$ as $\lambda$ goes to $0$, holds in the Skorokhod sense is a direct application of \cite[Theorem 2.5 page 167]{EthierKurtz}.\\
(3) Denote by $\mathcal{A}^{\lambda}$ the generator of $(X_t^{\lambda},0\leq t\leq \tau)$. Recall $\mathcal{A}^{\mathrm{s}}$ the generator of the minimal solution to \eqref{SDEselec}. By Lemma \ref{recextenslambda}, for any $g\in C_c([0,1])$, the process $(M_t^{\lambda},t\geq 0)$ defined by
\[M_t^{\lambda}=g(X_t^{\lambda})-\int_{0}^{t}\mathcal{A}^{\lambda, \mathrm{s}}g(X_s^{\lambda})\ddr s\]
is a martingale. We establish now that $\mathcal{A}^{\lambda, \mathrm{s}}$ converges uniformly towards $\mathcal{A}^{\mathrm{s}}$ on functions with a compact support $[a,b]$ contained in $(0,1)$. Recall the drift term of $\mathcal{A}^{\mathrm{s}}g(x)$ in \eqref{generatorWFs}: $\mu(\mathbb{N})(f(x)-x)g'(x)$. Since $\mathcal{A}^{\lambda, \mathrm{s}}$ and $\mathcal{A}^{\mathrm{s}}$ have the same jump parts, for any $g\in C_c([0,1])$; 
\begin{align}\label{unifconvgen}
||\mathcal{A}^{\lambda, \mathrm{s}}g-\mathcal{A}^{\mathrm{s}}g||_\infty&=\underset{x\in (0,1)}{\sup}\left\lvert (\mu(\mathbb{N})+\lambda)f^{\lambda}(x)-x)g'(x)-\mu(\mathbb{N})(f(x)-x)g'(x)\right \lvert \nonumber\\
&=\underset{x\in (0,1)}{\sup}|g'(x)|\left( \mu(\mathbb{N})\lvert f^{\lambda}(x)-f(x)\lvert)+\lambda f^{\lambda}(x)\right)\nonumber\\
&\leq \mu(\mathbb{N})||g'||_\infty \underset{x\in [a,b]}{\sup}|f(x)-f^{\lambda}(x)|+\lambda ||g'||_\infty.
\end{align}
One has
$\underset{x\in [a,b]}{\sup}|f(x)-f^{\lambda}(x)|=\underset{x\in [a,b]}{\sup}\sum_{k=1}^{\infty}x^{k}\mu(k)\frac{\lambda}{\mu(\mathbb{N})+\lambda}\leq \lambda \sum_{k=1}^{\infty}\frac{\mu(k)}{\mu(\mathbb{N})+\lambda}\leq \lambda.$
Therefore the right-hand side of \eqref{unifconvgen} goes to $0$ when $\lambda$ goes to $0$ and the generators $\mathcal{A}^{\lambda, \mathrm{s}}$ uniformly converge to $\mathcal{A}^{\mathrm{s}}$. The same arguments as in the proof of Lemma \ref{identificationXinfinity} show that
\[\left(g(X^{\mathrm{r}}_{t\wedge \tau^{\mathrm{r}}})-\int_{0}^{t}\mathcal{A}^{\mathrm{s}}g(X_{s\wedge \tau^{\mathrm{r}}}^{\mathrm{r}})\ddr s, t\geq 0\right)\]
is a martingale where $\tau^{\mathrm{r}}:=\inf\{t>0: X^{\mathrm{r}}_t\notin (0,1)\}$. Finally,  the process $(X^{\mathrm{r}}_{t},t\geq 0)$, stopped at time $\tau^{\mathrm{r}}$ solves the martingale problem  $(\mathrm{MP})$. Since the latter has a unique solution, we see that $(X^{\mathrm{r}}_t,t\geq 0)$ extends the minimal process. \qed
\end{proof}
The next lemma completes the convergence result  in Lemma \ref{recextensX} for processes $(X^{\lambda}_t(x),t\geq 0)$ as $\lambda$ goes to $0$.
\begin{lemma}[Almost sure pointwise convergence and monotonicity]\label{almostsureinx} For any $x\in [0,1]$ and $t\geq 0$, the  limit $X^{\mathrm{r}}_t(x):=\underset{\lambda \rightarrow 0+}{\lim}\! \uparrow X_t^{\lambda}(x)$ exists almost surely. Moreover, if $x\leq y$ then $X_t^{\mathrm{r}}(x)\leq X_t^{\mathrm{r}}(y)$ a.s. In particular, the limit $X_t^{\mathrm{r}}(1):=\underset{x\rightarrow 1-}{\lim}\! \uparrow X_t^{\mathrm{r}}(x)\in [0,1]$ exists almost surely.
\end{lemma}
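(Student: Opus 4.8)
The plan is to realise all the processes $(X^{\lambda}_t(x),t\ge 0)$, $\lambda>0$, together with the Lipschitz approximations used in Lemma~\ref{approxXa}, on a single probability space carrying the one Poisson random measure $\mathcal M$, and then to transport the monotonicity from the approximating SDEs — which have Lipschitz drift, so that the Dawson--Li comparison theorem applies — to the limit.

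First, by Lemma~\ref{approxXa} applied to the defective generating function $f^{\lambda}$ of $\mu^{\lambda}=\mu+\lambda\delta_\infty$, the minimal solution $(X^{\lambda}_t(x),t\ge 0)$ of \eqref{sdelambda} is the almost sure decreasing limit, as $m\to\infty$, of the pathwise unique strong solutions $(X^{\lambda,(m)}_t(x),t\ge 0)$ of the SDE of type \eqref{sdem} driven by $\mathcal M$ with drift $b^{\lambda,(m)}(x):=(\mu(\mathbb N)+\lambda)\big(f^{\lambda}_m(x)-x\big)=\sum_{k=1}^{m-1}x^{k}\mu(k)+x^{m}\big(\bar\mu(m)+\lambda\big)-(\mu(\mathbb N)+\lambda)x$, where $f^{\lambda}_m$ is the polynomial approximation of $f^{\lambda}$ from \eqref{fm}, so that $b^{\lambda,(m)}$ is Lipschitz on $[0,1]$. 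For $0<\lambda'\le\lambda$ one has $b^{\lambda,(m)}(x)-b^{\lambda',(m)}(x)=(\lambda-\lambda')(x^{m}-x)\le 0$ for all $x\in[0,1]$, so the comparison theorem \cite[Theorem~2.2]{DawsonLi} (with $\sigma=g_1'=g_1''=0$, the Lipschitz drifts $b^{\lambda,(m)}$, $b^{\lambda',(m)}$, same driving noise $\mathcal M$, same initial point $x$) gives $X^{\lambda,(m)}_t(x)\le X^{\lambda',(m)}_t(x)$ a.s.\ for all $t\ge 0$; letting $m\to\infty$ yields $X^{\lambda}_t(x)\le X^{\lambda'}_t(x)$ a.s. Intersecting the countably many null sets indexed by $m$ and by rational pairs $\lambda'<\lambda$, and using monotonicity to pass to real parameters, $\lambda\mapsto X^{\lambda}_t(x)$ is nonincreasing, so the $[0,1]$-valued monotone limit $X^{\mathrm r}_t(x):=\lim_{\lambda\to0+}\uparrow X^{\lambda}_t(x)=\sup_{\lambda>0}X^{\lambda}_t(x)$ exists almost surely. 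By monotone convergence, the duality \eqref{duallambda}, and the convergence $N^{\lambda,(n)}_t\downarrow N^{\mathrm{min},(n)}_t$ of Lemma~\ref{Nlambda}, we get $\mathbb E[(X^{\mathrm r}_t(x))^{n}]=\lim_{k}\mathbb E[x^{N^{1/k,(n)}_t}]=\mathbb E[x^{N^{\mathrm{min},(n)}_t}]$ for $x\in[0,1)$, so the random variable just constructed has exactly the law prescribed by \eqref{dualabso}; the two uses of the notation $X^{\mathrm r}_t(x)$ are consistent.

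Next, applying the same comparison theorem with a single drift $b^{\lambda,(m)}$ but two initial values $x\le y$ gives $X^{\lambda,(m)}_t(x)\le X^{\lambda,(m)}_t(y)$ a.s.\ for all $t$, and letting $m\to\infty$ and then $\lambda\to0+$ yields $X^{\mathrm r}_t(x)\le X^{\mathrm r}_t(y)$ a.s. In particular $x\mapsto X^{\mathrm r}_t(x)$ is nondecreasing on $[0,1)$ almost surely, so the limit $X^{\mathrm r}_t(1):=\lim_{x\to1-}\uparrow X^{\mathrm r}_t(x)=\sup_{x\in[0,1)\cap\mathbb Q}X^{\mathrm r}_t(x)\in[0,1]$ exists almost surely; by monotone convergence and the convention $x^{n}\to\mathbbm{1}_{\{n<\infty\}}$, its $n$-th moment equals $\lim_{x\to1-}\mathbb E[x^{N^{\mathrm{min},(n)}_t}]=\mathbb P_n(\zeta_\infty>t)$, matching the moments of the random variable $X^{\mathrm r}_t(1)$ introduced in \eqref{entrancelawXr}.

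The one genuine obstacle is that $f^{\lambda}$, like $f$, need not be globally Lipschitz when $f'(1-)=\infty$, so the comparison theorem cannot be invoked directly for the processes $X^{\lambda}$ themselves; this is circumvented precisely by passing through the polynomial approximations $f^{\lambda}_m$ — for which strong existence, uniqueness and comparison are all available — together with the observation that the crucial drift ordering $b^{\lambda,(m)}\le b^{\lambda',(m)}$ holds for every fixed $m$ and survives the limit $m\to\infty$. The remaining ingredients — working on a single probability space so that all the a.s.\ inequalities hold simultaneously, and the monotone passage from rational to real $\lambda$ — are routine.
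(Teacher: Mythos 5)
Your proof is correct and follows essentially the same route as the paper's: both compare the Lipschitz polynomial approximations $f^{\lambda}_m$ through the Dawson--Li comparison theorem (using the drift ordering in $\lambda$ and in the initial value) and then pass to the monotone limits $m\to\infty$ and $\lambda\to 0+$. The only point you leave implicit is the case $x=1$ of the first assertion (monotonicity of $\lambda\mapsto X_t^{\lambda}(1)$ for the processes started from the entrance law), which the paper obtains by letting $x\to 1-$ in the inequality between $X_t^{\lambda}(x)$ and $X_t^{\lambda'}(x)$; your additional consistency check of the moments against \eqref{dualabso} and \eqref{entrancelawXr} is a harmless extra not present in the paper.
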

\begin{proof}
For any $m\geq 2$. Let $f^{\lambda}_m$ be the generating function associated to the measure $\mu^{\lambda}_m$ defined by $\mu^{\lambda}_m(k):=\mu(k)$ for $k\leq m-1$ and $\mu^{\lambda}_m(k):=\mu(\mathbb{N})+\lambda$ for $k\geq m$. Denote by $(X_t^{\lambda,(m)},t\geq 0)$ the $\Lambda$-WF process with selection driven by $f^{\lambda}_m$.  By Lemma \ref{identificationXinfinity}, for any $x\in [0,1)$, $X_t^{\lambda}(x):=\underset{m\rightarrow \infty}{\lim} X_t^{\lambda,(m)}(x)$ and since $x^{m}-x\leq 0$, we easily check that if $\lambda'>\lambda$,
\begin{align*}
(\mu(\mathbb{N})+\lambda)\left(f_{m}^{\lambda}(x)-x\right)&=\sum_{k=1}^{m-1}(x^k-x)\mu(k)+(\bar{\mu}(m)+\lambda)(x^{m}-x)\\
&\geq  \sum_{k=1}^{m-1}(x^k-x)\mu(k)+(\bar{\mu}(m)+\lambda')(x^{m}-x)\\
&=(\mu(\mathbb{N})+\lambda')\left(f_{m}^{\lambda'}(x)-x\right).
\end{align*}
For any $m\geq 2$ and any $\lambda>0$, the function $f_{m}^{\lambda}$ is Lipschitz over $[0,1]$, the comparison theorem therefore applies and for any $x\in [0,1)$, $X_t^{\lambda,(m)}(x)\leq X_t^{\lambda',(m)}(x)$ a.s. By letting $m$ go to $\infty$, we get $X_t^{\lambda}(x)\leq X_t^{\lambda'}(x)$ a.s. Recall that $(X^{\lambda}_t,t\geq 0)$ can be started from $1$. By letting $x$ go to $1$, we also get $X_t^{\lambda}(1)\leq X_t^{\lambda'}(1)$ a.s. Finally the limit $\underset{\lambda \rightarrow 0+}{\lim} X_t^{\lambda}(x)=:X_t^{\mathrm{r}}(x)$ exists almost surely for all $x\in [0,1]$. The monotonicity in the initial values can be checked similarly. 
\qed
%
\end{proof}
We can now use both duality relationships \eqref{dualEFC} and \eqref{dualityEFCref} in order to classify the boundaries as in Table \ref{correspondance} and establish
Theorem \ref{thmmomentduality2}.
\begin{lemma}\label{correspond} 
The boundary $1$ is non-absorbing (respectively, inaccessible) for $(X_t^{\mathrm{r}},t\geq 0)$ if and only if the boundary $\infty$ is accessible (respectively, absorbing) for $(N_t,t\geq 0)$.
\end{lemma}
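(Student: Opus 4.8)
The plan is to read both equivalences off the two duality relations already available: the entrance-law identity \eqref{entrancelawXr} of Lemma \ref{recextensX} for the statement about non-absorption, and the identity of Theorem \ref{thmmomentdual1} (the $n\to\infty$ limit of \eqref{dualEFC}) for the statement about inaccessibility. The only extra ingredient is path regularity: $(X^{\mathrm{r}}_t,t\ge0)$ is Feller by Lemma \ref{recextensX}, hence admits a right-continuous version, and $(N_t,t\ge0)=(\#\Pi(t),t\ge0)$ is right-continuous with values in $\bar{\mathbb{N}}$ by Lemma \ref{genEFC}, the space $\bar{\mathbb{N}}$ being discrete in a neighbourhood of each finite point.

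For the first equivalence I would observe that the boundary $1$ is absorbing for $X^{\mathrm{r}}$ if and only if $X^{\mathrm{r}}_t(1)=1$ for all $t\ge0$ a.s. Because $X^{\mathrm{r}}$ is right-continuous and $[0,1)$ is open in $[0,1]$, any value $X^{\mathrm{r}}_{s}(1)<1$ with $s>0$ forces $X^{\mathrm{r}}_{\cdot}(1)<1$ on a right-neighbourhood of $s$, hence at some rational time; so absorption at $1$ is equivalent to $X^{\mathrm{r}}_t(1)=1$ a.s.\ for every rational $t>0$, and since $X^{\mathrm{r}}_t(1)\in[0,1]$, to $\mathbb{E}[X^{\mathrm{r}}_t(1)^n]=1$ for all $n\in\mathbb{N}$ and all rational $t>0$. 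By \eqref{entrancelawXr}, $\mathbb{E}[X^{\mathrm{r}}_t(1)^n]=\mathbb{P}_n(\zeta_\infty>t)$; letting $t\uparrow\infty$, this equals $1$ for all $n,t$ exactly when $\mathbb{P}_n(\zeta_\infty=\infty)=1$ for every $n\in\mathbb{N}$, i.e.\ exactly when $\infty$ is inaccessible for $N$. Contraposing gives: $1$ is non-absorbing for $X^{\mathrm{r}}$ if and only if $\infty$ is accessible for $N$.

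For the second equivalence I would use that $X^{\mathrm{r}}$ extends the minimal process $X^{\mathrm{min}}$, and that $X^{\mathrm{min}}$ coincides with $X^{\mathrm{r}}$ up to the first exit time of $(0,1)$ and is frozen at $1$ afterwards; hence $1$ is accessible for $X^{\mathrm{r}}$ if and only if $X^{\mathrm{min}}$ reaches $1$ with positive probability, and as $\{X^{\mathrm{min}}_t(x)=1\}$ is nondecreasing in $t$, this holds iff $\mathbb{P}(X^{\mathrm{min}}_t(x)=1)>0$ for some $t>0$ (and then for every $x\in(0,1)$). By Theorem \ref{thmmomentdual1}, $\mathbb{P}(X^{\mathrm{min}}_t(x)=1)=\mathbb{E}[x^{N^{(\infty)}_t}]$ for $x\in[0,1)$, so this is equivalent to $\mathbb{P}(N^{(\infty)}_t<\infty)>0$ for some $t>0$. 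Finally, since $(N^{(\infty)}_t,t\ge0)$ is right-continuous in $\bar{\mathbb{N}}$ and $\bar{\mathbb{N}}$ is discrete away from $\infty$, the event that $N^{(\infty)}_s<\infty$ for some $s>0$ coincides with the union over rational $q>0$ of the events $\{N^{(\infty)}_q<\infty\}$, hence has positive probability iff some fixed $\{N^{(\infty)}_t<\infty\}$ does; but that is exactly the event that $N^{(\infty)}$, started from $\infty$, leaves $\infty$, so its having positive probability means precisely that $\infty$ is non-absorbing for $N$. Contraposing: $1$ is inaccessible for $X^{\mathrm{r}}$ if and only if $\infty$ is absorbing for $N$.

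The moment manipulations and the two dualities are already in place, so the remaining work is short; the one genuinely delicate point is the transfer, on both sides of the duality, from almost-sure statements holding at each fixed time to statements holding simultaneously at all times. On the $X^{\mathrm{r}}$-side this rests on the right-continuity coming from the Feller property of Lemma \ref{recextensX}; on the $N$-side it rests on the right-continuity of $\#\Pi$ and the discreteness of $\bar{\mathbb{N}}$ near its finite points — these are what allow the uncountable time parameter to be replaced by a countable dense set. A minor additional point is that $X^{\mathrm{r}}$ and $X^{\mathrm{min}}$ have the same first hitting time of $1$, which follows because they agree up to the first exit of $(0,1)$ and $X^{\mathrm{min}}$ is absorbed there.
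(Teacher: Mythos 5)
Your proposal is correct and follows essentially the same route as the paper: it reads the first equivalence off the entrance-law identity \eqref{entrancelawXr} (the $x\to 1-$ limit of \eqref{dualityEFCref}) and the second off the $n\to\infty$ limit of \eqref{dualEFC}, exactly as the paper's proof does, with the agreement of $X^{\mathrm{r}}$ and $X^{\mathrm{min}}$ before $\tau_1$ used in the same way. The only difference is that you spell out the passage from fixed-time almost-sure statements to all-time statements via right-continuity and rational times, a point the paper leaves implicit; this is a harmless refinement, not a different argument.
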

\begin{proof} Recall that by Lemma \ref{recextensX}, $(X_t^{\mathrm{r}},t<\tau_1)$ has the same law as $(X^{\mathrm{min}}_t,t<\tau_1)$. As we shall use it repeatedly, we recall the duality relationships \eqref{dualEFC} and \eqref{dualityEFCref}: for any $n\in \mathbb{N}$, $x\in [0,1]$ and $t\geq 0$
\[\mathbb{E}[x^{N^{(n)}_t}]\overset{\eqref{dualEFC}}{=}\mathbb{E}[X^{\mathrm{min}}_t(x)^n]  \text{ and }\mathbb{E}[x^{N^{\mathrm{min},(n)}_t}]\overset{\eqref{dualityEFCref}}{=}\mathbb{E}[X^{\mathrm{r}}_t(x)^n].\]
By Lemma \ref{almostsureinx}, the limit $X^{\mathrm{r}}_t(1):=\underset{x\rightarrow 1-}{\lim} X^{\mathrm{r}}_t(x)$ exists almost surely. By letting $x$ go towards $1$ in the identity \eqref{dualityEFCref} above with $n=1$, we have 
$\mathbb{E}[X^{\mathrm{r}}_t(1)]=\mathbb{P}_n(\zeta_{\infty}>t)$.

For the first implication, we look at the contraposition and verify that if $\infty$ is inaccessible  for $(N^{(n)}_t,t\geq 0)$, then the boundary $1$ is absorbing for $(X_t^{\mathrm{r}},t\geq 0)$.  If $\infty$ is inaccessible  for $(N^{(n)}_t,t\geq 0)$ then it is inaccessible for $(N^{\mathrm{min},(n)}_t,t\geq 0)$ and $\mathbb{P}_1(\zeta_{\infty}>t)=\mathbb{E}\big(X^{\mathrm{r}}_t(1)\big)=1$. Therefore, $\mathbb{E}\big(1-X^{\mathrm{r}}_t(1)\big)=0$ and since $X^{\mathrm{r}}_t(1)\leq 1$ a.s, we get $X^{\mathrm{r}}_t(1)=1$ a.s. Thus, the boundary $1$ is absorbing for $(X_t^{\mathrm{r}},t\geq 0)$.   
%

We then show the second implication. If $\infty$ is accessible for $(N^{(n)}_t,t\geq 0)$, then it is accessible and absorbing for $(N^{\mathrm{min},(n)}_t,t\geq 0)$ and there exists $t>0$ such that $\mathbb{P}_1(\zeta_{\infty}>t)=\mathbb{E}[X^{\mathrm{r}}_t(1)]<1$. Therefore, $\mathbb{P}(X_t^{\mathrm{r}}(1)<1)>0$ and the boundary $1$ is non-absorbing for $(X_t^{\mathrm{r}},t\geq 0)$.  We thus have established that $(X_t^{\mathrm{r}},t\geq 0)$ has boundary $1$ non-absorbing if and only if $\infty$ is accessible for $(N_t^{(n)},t\geq 0)$. 

The second equivalence is shown along similar arguments. Letting $n$ go to $\infty$ in the identity \eqref{dualEFC}, we get for any $x\in [0,1)$, 
$\mathbb{E}[x^{N^{(\infty)}_t}]=\mathbb{P}(X^{\mathrm{min}}_t(x)=1)$.
We see that the boundary $1$ is inaccessible for the process $(X^{\mathrm{min}}_t,t\geq 0)$, which is equivalent to be inaccessible for $(X_t^{\mathrm{r}},t\geq 0)$, if and only if $\mathbb{E}[x^{N^{(\infty)}_t}]=0$ for any $x\in [0,1)$, which is equivalent to  $N^{(\infty)}_t=\infty$ almost surely, that is to say the boundary $\infty$ is absorbing for the process $(N_t^{(n)},t\geq 0)$.    \qed 
\end{proof}
\noindent \textbf{Proof of Theorem \ref{thmmomentduality2}}. The moment duality relationship \eqref{dualityEFCref} is provided by Lemma \ref{recextensX}. Statements (i) to (iv) are deduced by applying Lemma \ref{correspond} and combining the necessary and sufficient conditions for boundaries to be respectively absorbing, non-absorbing and  inaccessible or accessible. We establish statements  (ii) and (iv), the others are obtained via similar arguments. For statement (ii), if $(N^{\mathrm{min}}_t,t\geq 0)$ has $\infty$ as regular absorbing boundary, then the boundary $\infty$ is regular non-absorbing for the non-stopped process $(N_t^{(n)},t\geq 0)$.  Therefore $\mathbb{P}(X_t^{\mathrm{min}}(x)=1)=\mathbb{E}[x^{N_t^{(\infty)}}]>0$ and the boundary $1$ of $(X_t^{\mathrm{r}},t\geq 0)$ is accessible. On the other hand, since $\infty$ is accessible, $\mathbb{E}[X_t(1)^{n}]=\mathbb{P}(\zeta^{(n)}_\infty>t)<1$, and boundary $1$ is also non-absorbing. Hence, $1$ is regular non-absorbing. 

For statement (iv), if $(N_t,t\geq 0)$ has $\infty$ as a natural boundary, then $(N_t,t\geq 0)$ and $(N^{\mathrm{min}}_t,t\geq 0)$ have the same law and $\infty$ being inaccessible. The boundary $1$ of $(X_t^{\mathrm{r}},t\geq 0)$ is thus absorbing. The boundary $\infty$ being absorbing, the boundary $1$ is also inaccessible. Hence, $1$ is a natural boundary.
\qed

The next theorem precises the possible behaviors of the dual processes at their boundaries when they are regular non-absorbing.  Recall that we say that a boundary $b$ for a process $(Z_t,t\geq 0)$ is {\it regular reflecting } if it is regular non absorbing and the random level-set $\overline{\{t>0: Z_t=b\}}$ has Lebesgue measure zero. Recall also that the boundary $b$ is {\it regular for itself} if the process started from $b$ returns immediately to $b$, i.e $\sigma_b:=\inf\{t>0: Z_t=b\}=0$ $\mathbb{P}_b$-almost surely.

We  now establish  the correspondences given in Table \ref{correspondancereg}.
\begin{theorem}[regular reflecting/regular for itself]\label{regularforitself} The boundary $1$ of the process $(X_t^{\mathrm{r}},t\geq 0)$ is regular for itself (respectively, regular reflecting) if and only if the boundary $\infty$ of the process $(N_t,t\geq 0) $ is regular reflecting (respectively, regular for itself). 
\end{theorem}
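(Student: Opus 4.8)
The plan is to exploit the duality relationship \eqref{dualityEFCref} of Theorem \ref{thmmomentduality2} and translate the two pathwise notions — "regular reflecting" (the level-set at the boundary has zero Lebesgue measure) and "regular for itself" (the boundary is reached immediately when the process starts there) — into statements about occupation times and hitting times that are visible through the moment duality. The starting observation is that, by Fubini and \eqref{dualityEFCref}, for any $x\in[0,1)$,
\[
\mathbb{E}_x\!\left[\int_0^t \mathbbm{1}_{\{X^{\mathrm{r}}_s(x)=1\}}\,\ddr s\right] = \int_0^t \mathbb{P}_x(X^{\mathrm{r}}_s(x)=1)\,\ddr s = \int_0^t \underset{n\rightarrow\infty}{\lim}\,\mathbb{E}[x^{N^{\mathrm{min},(n)}_s}]\,\ddr s = \int_0^t \mathbb{E}[x^{N^{\mathrm{min},(\infty)}_s}]\,\ddr s,
\]
so the Lebesgue measure of the time $X^{\mathrm{r}}$ spends at $1$ is dual to the Lebesgue measure of the time $N^{\mathrm{min},(\infty)}$ spends at $\infty$, i.e. spends absorbed after explosion. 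Thus "$1$ is regular reflecting for $X^{\mathrm{r}}$" (zero occupation time at $1$) should be equivalent to "$N^{(\infty)}$ reaches $\infty$ but leaves it immediately and spends zero time there afterwards in the \emph{non-stopped} process", which is exactly "$\infty$ is regular for itself for $N$" — because for the non-stopped process $N$, being absorbed at $\infty$ in the stopped version corresponds to the set of times $N=\infty$, and regular-for-itself means this set has empty interior / the process returns instantly, hence (combined with the strong Markov property and a Fubini argument run on $N$) zero Lebesgue measure.

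First I would set up the occupation-time identity above rigorously: start $X^{\mathrm{r}}$ from $x<1$, let $x\uparrow 1$ using the monotone convergence $X^{\mathrm{r}}_t(x)\uparrow X^{\mathrm{r}}_t(1)$ from Lemma \ref{almostsureinx}, and get the identity with initial value $1$; the right-hand side becomes $\int_0^t \mathbb{P}_n(\zeta_\infty\le s)\,\ddr s$-type quantities controlling the time $N$ spends absorbed. Then I would argue both directions. For the direction "$\infty$ regular for itself for $N$ $\Rightarrow$ $1$ regular reflecting for $X^{\mathrm{r}}$": by the strong Markov property of $N$ at $\zeta_\infty$ and the fact that $\infty$ regular-for-itself means $N$ started from $\infty$ returns to $\mathbb{N}$ at time $0$ and, by an iterated application, the random set $\{t:N_t=\infty\}$ is closed with empty interior, hence — invoking a standard argument that for a Feller process a regular-for-itself boundary is visited on a set of zero Lebesgue measure (this is where one uses that the process is not \emph{sticky}; for a Feller process with no killing at the point and a genuine excursion structure, the occupation measure of a single regular point is a.s. zero) — we get $\int_0^\infty \mathbb{P}(N^{\mathrm{min},(\infty)}_s=\infty)\,\ddr s$ controlled, giving zero occupation time at $1$ for $X^{\mathrm{r}}$, i.e. regular reflecting. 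The converse uses the same duality identity read the other way: if the occupation time of $1$ by $X^{\mathrm{r}}$ is a.s. zero then $\mathbb{P}(N^{\mathrm{min},(\infty)}_s=\infty)=0$ for a.e. $s>0$, which forces $N$ started near $\infty$ (or from $\infty$, via the entrance law built in Lemma \ref{recextensX}) to leave $\infty$ immediately and not accumulate there — this is "$\infty$ regular for itself". The symmetric statement (swap "reflecting" and "for itself") is obtained by the same argument with the roles of $X^{\mathrm{r}}$ and $N$ exchanged, using that "$1$ regular for itself for $X^{\mathrm{r}}$" means $X^{\mathrm{r}}$ started from $1$ returns to $1$ at time $0$, which dualizes — via $\mathbb{E}[X^{\mathrm{r}}_t(1)]=\mathbb{P}_n(\zeta_\infty>t)\to 1$ as $t\to 0$ and a finer analysis of the rate of this convergence — to "$\infty$ reflecting for $N$", i.e. $N$ spends zero time at $\infty$.

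The main obstacle I expect is making precise the step that translates "regular for itself" into "zero occupation time" (and vice versa) for these particular processes, since in general a regular boundary can be \emph{sticky} (positive occupation time) even when it is regular for itself, and "reflecting" does not by itself imply "regular for itself". The resolution should come from the specific structure here: both $X^{\mathrm{r}}$ and $N$ are constructed as \emph{limits} (of the $X^\lambda$, resp. the $N^\lambda$) and these approximating processes have no stickiness built in — the drift/jump mechanism near the boundary is the limit of mechanisms that are either entrance or exit, never sticky — so one can show that the limit semigroup has no atom of the resolvent at the boundary that would create stickiness. Concretely, I would either (a) exhibit the excursion measure of $X^{\mathrm{r}}$ away from $1$ and show the sum of excursion lengths accounts for all time (no drift part at $1$), or (b) — cleaner — note that if $1$ were sticky for $X^{\mathrm{r}}$ then by duality $\infty$ would be sticky for $N$, but $N$ is a limit of the non-sticky processes $N^\lambda$ (each has $\infty$ an honest exit, hence spends zero time at finite levels once absorbed) together with monotone convergence $N^{\lambda,(n)}\downarrow N^{\mathrm{min},(n)}$, which rules out the creation of stickiness in the limit; one then deduces that for these processes "regular reflecting" and "regular (non-absorbing)" coincide, and similarly a duality/symmetry bookkeeping between the stopped and non-stopped versions $N^{\mathrm{min}}$ and $N$, and $X^{\mathrm{min}}$ and $X^{\mathrm{r}}$, pairs "reflecting" on one side with "for itself" on the other exactly as in Table \ref{correspondancereg}.
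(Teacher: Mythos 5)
You have the right raw materials (the two duality identities and the Fubini trick for occupation times), but the logical architecture of your argument is wrong at the decisive step, and the proof as proposed does not go through. The fatal move is where you try to pass, \emph{for the same process}, from ``regular for itself'' to ``zero Lebesgue occupation of the boundary'' — e.g.\ your claim that for a Feller process a regular-for-itself boundary is visited on a set of zero Lebesgue measure, and your subsequent plan to rule out stickiness by arguing that $X^{\mathrm{r}}$ and $N$ are limits of non-sticky processes. First, ``closed with empty interior'' does not imply zero Lebesgue measure, and regular-for-itself points of Feller processes can perfectly well be sticky, so the ``standard argument'' you invoke does not exist. Second, the paper explicitly states in its closing paragraph that whether these boundaries can be sticky or irregular for themselves is an \emph{open question}; your proposed resolution (that ``regular reflecting'' and ``regular non-absorbing'' coincide here) would settle it, which is a strong sign that this step cannot be filled in by soft limit arguments. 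You also misstate the definition at one point: $\infty$ regular for itself means $N$ started from $\infty$ returns to $\infty$ at time $0$, not that it returns to $\mathbb{N}$ at time $0$. Finally, your opening occupation-time identity degenerates as written, since the stopped process started from $\infty$ satisfies $N^{\mathrm{min},(\infty)}_s\equiv\infty$, so $\mathbb{E}[x^{N^{\mathrm{min},(\infty)}_s}]=0$ for $x<1$.

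The point you are missing is that the theorem never requires comparing ``for itself'' with ``reflecting'' on the same side: the duality \emph{swaps} the two notions across the two processes, which is exactly why the pairing in Table \ref{correspondancereg} is crosswise. The paper's proof runs each direction through a single identity. For the first equivalence: after reducing $\mathbb{P}_1(\sigma_1>t)=0$ to $\lim_{x\rightarrow 1-}\mathbb{P}_x(\tau_1\leq t)=1$ via the Markov property at a small time and right-continuity at $0$, the \emph{first} duality \eqref{dualEFC} gives $\mathbb{P}_x(\tau_1\leq t)=\mathbb{E}[x^{N_t^{(\infty)}}]\rightarrow\mathbb{P}(N_t^{(\infty)}<\infty)$ as $x\rightarrow 1-$; so ``$1$ regular for itself'' is equivalent to $\mathbb{P}(N_t^{(\infty)}=\infty)=0$ for all $t>0$, which by Fubini is exactly ``$\infty$ regular reflecting''. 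For the second: the \emph{second} duality \eqref{dualabso} gives $\mathbb{P}(\zeta_\infty^{(n)}>t)=\mathbb{E}[X_t^{\mathrm{r}}(1)^n]$, hence $\lim_{n\rightarrow\infty}\mathbb{P}(\zeta_\infty^{(n)}>t)=\mathbb{P}(X_t^{\mathrm{r}}(1)=1)$; ``$1$ regular reflecting'' forces this to vanish for a.e.\ (hence, by monotonicity in $t$, all) $t>0$, i.e.\ $\zeta_\infty=0$ $\mathbb{P}_\infty$-a.s., which is ``$\infty$ regular for itself'', and conversely. No excursion theory and no non-stickiness claim is needed anywhere. I recommend you rebuild the proof along these lines, keeping hitting-time statements on one side of each duality and occupation-time statements on the other.
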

\begin{proof}
By definition, $1$ is regular for itself if for any $t>0$, $\mathbb{P}_1(\sigma_1>t)=0$. We first observe that this is equivalent to the condition $\underset{x\rightarrow 1-}{\lim}\ \mathbb{P}_x(\sigma_1>t)=0$. By applying the Markov property at a time $s>0$, we obtain that for any time $t>0$, $ \mathbb{P}_1 (\sigma_1>t+s)=\mathbb{E}_1\big[\mathbb{P}_{X^{\mathrm{r}}_s(1)}(\sigma_1>t)\mathbbm{1}_{\{\sigma_1>s\}}\big]$. By the right-continuity at $0$ of the sample paths, $X^{\mathrm{r}}_s(1)\underset{s\rightarrow 0+}{\longrightarrow} 1$ almost surely. Thus, $1$ is regular for itself if and only if $\underset{x\rightarrow 1-}{\lim}\ \mathbb{P}_x(\sigma_1>t)=0$ for all $t>0$. Note that for any $x\in [0,1)$ under $\mathbb{P}_x$, $\sigma_1$ has the same law as $\tau_1$. Moreover, $(X^{\mathrm{r}}_{t\wedge \tau_1},t\geq 0)$ has the same law as $(X_t^{\mathrm{min}},t\geq 0)$. By  the duality relationship \eqref{dualEFC}, for any $t>0$,
$\mathbb{P}_x(\tau_1\leq t)=\mathbb{P}(X_t^{\mathrm{min}}(x)=1)=\mathbb{E}[x^{N_t^{(\infty)}}].$
Hence, 
$\underset{x\rightarrow 1-}{\lim} \ \mathbb{P}_x(\tau_1\leq t)=\mathbb{P}(N_t^{(\infty)}<\infty).$
If the boundary $1$ of the process $(X_t^{\mathrm{r}},t\geq 0)$ is regular for itself then  $\underset{x\rightarrow 1-}{\lim}\ \mathbb{P}_x(\tau_1\leq t)=1$ and $\mathbb{P}(N_t^{(\infty)}<\infty)=1$. By Fubini's theorem, the set $\overline{\{t\geq 0: N^{(\infty)}_t=\infty\}}$ has zero Lebesgue measure almost surely, namely, $\infty$ is regular reflecting. If $\infty$ is regular reflecting, then $\mathbb{P}(N_t^{(\infty)}<\infty)=1$ for all $t>0$, then $\underset{x\rightarrow 1-}{\lim}\ \mathbb{P}_x(\tau_1\leq t)=1$, and as noticed before this entails that $\mathbb{P}_1(\sigma_1\leq t)=1$ for all $t>0$. Therefore, $\sigma_1=0$, $\mathbb{P}_1$-almost surely and $1$ is regular for itself.  

We now show that $1$ is regular reflecting if and only if $\infty$ is regular for itself. Set $\zeta_\infty^{(n)}:=\inf\{t>0: N_{t-}^{(n)}=\infty\}$. For any $t>0$ and $n\in \mathbb{N}$, by the duality relationship \eqref{dualabso}, 
$\mathbb{P}(\zeta_\infty^{(n)}>t)=\underset{x\rightarrow 1-}{\lim}\ \mathbb{E}[x^{N_t^{\mathrm{min},(n)}}]=\mathbb{E}[X_t^{\mathrm{r}}(1)^n].$
Letting $n$  go to  $\infty$ yields
$$\underset{n\rightarrow \infty}{\lim} \mathbb{P}(\zeta_\infty^{(n)}>t )=\mathbb{P}(X_t^{\mathrm{r}}(1)=1).$$
Provided that $1$ is regular reflecting for the process $(X_t^{\mathrm{r}},t\geq 0)$, we get $\underset{n\rightarrow \infty}{\lim} \mathbb{P}(\zeta_\infty> t)=0$ for all $t>0$, hence $\zeta_\infty=0$, $\mathbb{P}_{\infty}$-a.s. Therefore,  $\infty$ is regular for itself for $(N_t^{(\infty)},t\geq 0)$. Similarly, if $\infty$ is regular for itself, we see that $1$ is regular reflecting. \qed
\end{proof}
Notice that when boundary $1$ is regular reflecting, then  $1$ is necessarily an  instantaneous point, in the sense that $\tau^{1}:=\inf\{t>0: X_t^{\mathrm{r}}(1)<1\}=0$ a.s. The next proposition shows how the  property for boundary $1$  of being instantaneous is associated to some condition on the boundary $\infty$ of the dual process $(N_t,t\geq 0)$. We recall the notation $\zeta_\infty^{(n)}:=\inf\{t>0: N_t^{(n)}=\infty\}$ for all $n\geq 1$ and that $\infty$ is an instantaneous exit if  for all $t>0$, $\underset{n\rightarrow \infty}{\lim} \mathbb{P}(\zeta_\infty^{(n)}>t)=0$.
\begin{proposition}[Instantaneous entrance]\label{instantanprop} 
Assume that the boundary $1$ is an entrance for $(X_t^{\mathrm{r}},t\geq 0)$.  The boundary $1$ is an instantaneous entrance if and only if $\infty$ is an instantaneous exit. Similarly, the boundary $1$ is an instantaneous exit if and only if $\infty$ is an instantaneous entrance.
\end{proposition}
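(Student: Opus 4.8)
The plan is to read off both equivalences from two identities that already appear inside the proof of Theorem~\ref{regularforitself}, namely, for every $t>0$,
\[
\underset{n\to\infty}{\lim}\,\mathbb{P}_n(\zeta_\infty>t)=\mathbb{P}\big(X^{\mathrm{r}}_t(1)=1\big)
\quad\text{and}\quad
\underset{x\to1-}{\lim}\,\mathbb{P}_x(\tau_1\le t)=\mathbb{P}\big(N^{(\infty)}_t<\infty\big).
\]
The first comes from the entrance law \eqref{entrancelawXr}, $\mathbb{E}[X^{\mathrm{r}}_t(1)^n]=\mathbb{P}_n(\zeta_\infty>t)$, by letting $n\to\infty$ using the convention \eqref{convention}, bounded convergence and the monotone coupling of Lemma~\ref{monotonicity}. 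The second comes from \eqref{dualEFC}, $\mathbb{E}[X^{\mathrm{min}}_t(x)^n]=\mathbb{E}[x^{N^{(n)}_t}]$, by first letting $n\to\infty$ for fixed $x<1$ (using that $X^{\mathrm{min}}$ is absorbed at $1$, so $\{X^{\mathrm{min}}_t(x)=1\}=\{\tau_1\le t\}$, and that $X^{\mathrm{r}}$ agrees with $X^{\mathrm{min}}$ up to $\tau_1$) and then letting $x\uparrow 1$. With these in hand, the proof amounts to recognising the two ``instantaneous'' conditions inside the displayed limits.

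For the first equivalence, $1$ is an entrance for $X^{\mathrm{r}}$, equivalently $\infty$ is an exit for $N$ by item~i) of Theorem~\ref{thmmomentduality2}, so both notions are meaningful. I would first record, as already noted just after the proof of Theorem~\ref{regularforitself}, that since $1$ is inaccessible from $[0,1)$ one has under $\mathbb{P}_1$ the inclusions $\{\tau^1<t\}\subseteq\{X^{\mathrm{r}}_t(1)<1\}\subseteq\{\tau^1\le t\}$ with $\tau^1:=\inf\{t>0:X^{\mathrm{r}}_t(1)<1\}$; combined with Feller continuity of $X^{\mathrm{r}}$ at $1$ and right-continuity of its paths, this shows that $1$ is an instantaneous entrance if and only if $\mathbb{P}(X^{\mathrm{r}}_t(1)=1)=0$ for all $t>0$. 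On the other hand, by definition $\infty$ is an instantaneous exit if and only if $\lim_n\mathbb{P}_n(\zeta_\infty\le t)=1$, i.e. $\lim_n\mathbb{P}_n(\zeta_\infty>t)=0$, for all $t>0$. The first displayed identity identifies these two conditions, which proves the equivalence.

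The second equivalence is the mirror statement, relevant when $1$ is an exit for $X^{\mathrm{r}}$, equivalently $\infty$ an entrance for $N$ by item~iii) of Theorem~\ref{thmmomentduality2}; note that under the hypothesis of the first part, where $1$ is an entrance, neither ``$1$ instantaneous exit'' nor ``$\infty$ instantaneous entrance'' can occur, so the statement is then vacuous, and conversely. By definition $1$ is an instantaneous exit if and only if $\lim_{x\to1}\mathbb{P}_x(\tau_1\le t)=1$ for all $t>0$, while $\infty$ is an instantaneous entrance if and only if $N^{(\infty)}_t<\infty$ almost surely for all $t>0$, i.e. $\mathbb{P}(N^{(\infty)}_t<\infty)=1$ for all $t>0$; the second displayed identity shows these two conditions coincide.

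The only step that is not pure bookkeeping is the translation, in the first equivalence, of the definition of ``instantaneous entrance at $1$'' into the marginal statement $\mathbb{P}(X^{\mathrm{r}}_t(1)=1)=0$ for all $t>0$: this rests on the fact that once $X^{\mathrm{r}}$ started from $1$ has left $1$ it cannot return (inaccessibility of $1$ from below) and on right-continuity at $0$, both of which are already exploited in the proof of Theorem~\ref{regularforitself}. Everything else is an interchange of limits justified by dominated convergence and the monotone couplings of Lemmas~\ref{monotonicity} and~\ref{almostsureinx}; there is no genuine analytic difficulty here, the substantive work having been carried out in establishing the duality \eqref{dualityEFCref} in Theorem~\ref{thmmomentduality2}.
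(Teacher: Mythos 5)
Your proposal is correct and follows essentially the paper's own route: both equivalences are read off from the duality identities $\mathbb{E}[X^{\mathrm{r}}_t(1)^{n}]=\mathbb{P}(\zeta^{(n)}_\infty>t)$ and $\mathbb{E}[x^{N^{(n)}_t}]=\mathbb{E}[X^{\mathrm{min}}_t(x)^{n}]$ by letting $n\to\infty$ (respectively $x\to 1-$), with inaccessibility of the boundary used, exactly as in the paper, to identify $\mathbb{P}_1(\tau^1>t)$ with $\mathbb{P}(X^{\mathrm{r}}_t(1)=1)$. The paper's proof is just a terser version of the same argument (treating the second equivalence as "similar"), so no substantive difference.
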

\begin{proof} Recall $\tau^1$ the first entrance time in $[0,1)$ of process $(X_t^{\mathrm{r}},t\geq 0)$. 
The argument is similar to that in the proof of Theorem \ref{regularforitself}. By Theorem \ref{thmmomentduality2}, for any $t\geq 0$,  $\mathbb{E}[X^{\mathrm{r}}_t(1)^{n}]=\mathbb{P}(N_t^{\mathrm{min},(n)}<\infty)=\mathbb{P}(\zeta^{(n)}_\infty>t)$. Since $1$ is not accessible, for any $t>0$,  
\begin{align*}
\mathbb{P}_1(\tau^1>t)&=\mathbb{P}(X_t^{\mathrm{r}}(1)=1)=\underset{n\rightarrow \infty}{\lim}\mathbb{E}[X^{\mathrm{r}}_t(1)^{n}]=\underset{n\rightarrow \infty}{\lim} \mathbb{P}(\zeta^{(n)}_\infty>t)=0
\end{align*}
which allows us to conclude the first equivalence. The second is established similarly from the first duality relationship \eqref{dualEFC}. \qed
\end{proof}
\begin{remark} 
The condition $\underset{n\rightarrow \infty}{\lim} \mathbb{P}(\zeta_\infty^{(n)}>t)=0$ is sometimes called $t$-regularity of the boundary $\infty$, see Kolokoltsov's book \cite[Section 6.1, page 273]{MarkovProcessesSemigroupsandGenerators}. 
\end{remark}
\begin{remark} 
We mention that  when the block counting process $(N_t^{(n)},t\geq 0)$ comes down from infinity (in particular, when $\infty$ is an entrance boundary), then provided that $\Lambda(\{1\})=0$, the process leaves the boundary $\infty$ instantaneously, see \cite[Lemma 2.5]{cdiEFC}.
\end{remark}
We clarify now the longterm behavior of the process $(X_t^{\mathrm{r}},t\geq 0)$ when the boundary $1$ is not an exit. In particular, we establish that in the regular non-absorbing case, the boundary $1$ is transient, in the sense that the level set $\overline{\{t>0; X_t^{\mathrm{r}}=1\}}$ is a.s. bounded.
\begin{theorem}\label{fixationat0} Assume that $\Lambda$ satisfies \eqref{cdipsi}. If $(X_t^{\mathrm{r}}(x),t\geq 0)$
has boundary $1$ either regular non-absorbing or an entrance, then \[\exists \ t_0>0; X_t^{\mathrm{r}}(x)=0 \text{ for all } t\geq t_0, \text{ a.s.}\]
\end{theorem}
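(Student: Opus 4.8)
The plan is to use the duality relationship \eqref{dualabso} together with the long-term behavior of the dual block counting process $(N_t^{(\infty)},t\geq 0)$ under Schweinsberg's condition \eqref{cdipsi}. First I would observe that under the hypotheses (boundary $1$ regular non-absorbing or entrance for $X^{\mathrm r}$), Table \ref{correspondance} and Theorem \ref{thmmomentduality2} tell us that the boundary $\infty$ of $(N_t,t\geq 0)$ is either regular absorbing or an exit; in both cases $\infty$ is \emph{absorbing} for the stopped process $(N^{\mathrm{min},(n)}_t,t\geq 0)$ and, crucially, $\infty$ is accessible. Under \eqref{cdipsi} the coalescence part dominates so that starting from any finite $n$ the process gets pushed toward $\infty$: more precisely, since $\infty$ is accessible for $(N_t^{(n)},t\geq 0)$ and the $\Lambda$-coalescent alone comes down from infinity, the stopped chain $(N^{\mathrm{min},(n)}_t,t\geq 0)$ satisfies $\zeta_\infty^{(n)}<\infty$ almost surely and hence $N^{\mathrm{min},(n)}_t=\infty$ for all large $t$, a.s. The second, key input is that this explosion happens in finite time \emph{uniformly in the starting point}: I would argue that $\sup_{t\ge 0}\mathbb{P}_n(\zeta_\infty^{(n)}>t)\to 0$ as $t\to\infty$, and more importantly that $\lim_{n\to\infty}\mathbb{P}(\zeta_\infty^{(n)}>t)$ can be controlled --- indeed $\mathbb{P}(N^{\mathrm{min},(n)}_t=\infty)=1-\mathbb{P}(\zeta_\infty^{(n)}>t)$, and by the duality \eqref{dualabso} evaluated at general $x\in[0,1)$ we get $\mathbb{E}[X_t^{\mathrm r}(x)^n]=\mathbb{E}[x^{N^{\mathrm{min},(n)}_t}]\to 0$ as $n\to\infty$ for each fixed $t$ and $x<1$, since $N^{\mathrm{min},(n)}_t\to N^{\mathrm{min},(\infty)}_t=\infty$ a.s.

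Next I would translate this into a statement about $X^{\mathrm r}$. Fix $x\in[0,1)$. From $\mathbb{E}[X_t^{\mathrm r}(x)^n]=\mathbb{E}[x^{N^{\mathrm{min},(n)}_t}]\to 0$ as $n\to\infty$ (for fixed $t$) we learn that $X_t^{\mathrm r}(x)<1$ a.s. for every $t$, which we already knew. To get fixation at $0$ I would instead use $n=1$: $\mathbb{E}[X_t^{\mathrm r}(x)]=\mathbb{E}[x^{N^{\mathrm{min},(1)}_t}]$, and as $t\to\infty$, $N^{\mathrm{min},(1)}_t\to\infty$ a.s.\ (the chain started from $1$ block explodes in finite time under \eqref{cdipsi} and the accessibility of $\infty$), so $\mathbb{E}[X_t^{\mathrm r}(x)]\to 0$. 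Since $X^{\mathrm r}$ is a nonnegative supermartingale (the drift $f(X)-X\le 0$ and the jump part is a martingale; this was recorded for $X^{\mathrm{min}}$ and transfers to $X^{\mathrm r}$ on $[0,\tau_1)$, but one must be slightly careful across excursions from $1$), it converges a.s.\ as $t\to\infty$ to some limit $X_\infty^{\mathrm r}(x)\in\{0,1\}$ --- the only possible limit values, since $0$ and $1$ are the only points where the dynamics stop, and $X^{\mathrm r}$ cannot linger near an interior point. Combining $\mathbb{E}[X_t^{\mathrm r}(x)]\to 0$ with a.s.\ convergence and bounded convergence gives $\mathbb{P}(X_\infty^{\mathrm r}(x)=1)=0$, hence $X_\infty^{\mathrm r}(x)=0$ a.s.; and since $0$ is absorbing, this says precisely that there exists $t_0>0$ with $X_t^{\mathrm r}(x)=0$ for all $t\ge t_0$, a.s.

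The main obstacle is justifying the a.s.\ convergence to a boundary point and ruling out oscillation. In the regular non-absorbing case the process can return to $1$ infinitely often in principle, so $X_t^{\mathrm r}$ is \emph{not} obviously a supermartingale for all time unless one checks that the extension does not gain mass when leaving $1$. I would handle this by using Theorem \ref{rec}-type reasoning: the comparison $X_t^{\mathrm r}(x)\le Y_t(x)$ with the neutral $\Lambda$-Wright-Fisher process $Y$ (which, under \eqref{cdipsi}, hits $\{0,1\}$ a.s.) does not directly apply once $X^{\mathrm r}$ leaves $1$, so instead I would argue directly from the duality: for each fixed $x<1$ the family $(\mathbb{E}[X_t^{\mathrm r}(x)^n])_{n\ge 1}=(\mathbb{E}[x^{N^{\mathrm{min},(n)}_t}])_{n\ge 1}$ determines the law of $X_t^{\mathrm r}(x)$, and since $N^{\mathrm{min},(n)}_t\uparrow\infty$ in $t$ a.s.\ one gets $\mathbb{E}[X_t^{\mathrm r}(x)^n]\to 0$ as $t\to\infty$ for every $n$, forcing $X_t^{\mathrm r}(x)\to 0$ in distribution. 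Upgrading this to a.s.\ convergence to $0$ with a finite absorption time $t_0$ then uses that $0$ is absorbing (so $\{X_t^{\mathrm r}(x)=0\}$ is increasing in $t$) and that $\mathbb{P}(X_t^{\mathrm r}(x)=0)\ge \mathbb{P}(X_t^{\mathrm r}(x)\le \varepsilon)-\mathbb{P}(0<X_t^{\mathrm r}(x)\le\varepsilon)$ together with the fact that $X^{\mathrm r}$ spends no positive-probability time in small neighborhoods of $0$ without being absorbed --- which follows from the strong Markov property and the fact that from any $x\in(0,\varepsilon)$ the process hits $0$ with probability bounded below. I expect the write-up of this last point to be the most delicate, and I would lean on the existing identification of $(X_t^{\mathrm r},t<\tau_1)$ with the minimal solution and on the known behavior of $\Lambda$-Wright-Fisher processes near $0$.
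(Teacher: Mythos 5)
Your overall strategy --- use the duality \eqref{dualabso} to get $\mathbb{E}[X_t^{\mathrm{r}}(x)]=\mathbb{E}[x^{N^{\mathrm{min},(1)}_t}]\to 0$, deduce almost sure convergence of $X^{\mathrm{r}}_t(x)$ to $0$ from the supermartingale property, then upgrade to absorption in finite time --- is viable, and in the entrance case it is essentially the paper's argument. In the regular non-absorbing case your route genuinely differs from the paper's, which instead decomposes the path into successive independent excursions below a fixed level $x<1$, each hitting $0$ before returning to $1$ with probability at least $\mathbb{P}_x(\tau_0<\tau_1)>0$. Your unified route can be made to work: the supermartingale property of the \emph{extension} follows from the duality itself, since $\mathbb{E}[X_s^{\mathrm{r}}(y)]=\mathbb{E}[y^{N^{\mathrm{min},(1)}_s}]\le y$ because $N^{\mathrm{min},(1)}_s\ge 1$; and $N^{\mathrm{min},(1)}_t\to\infty$ a.s.\ follows from accessibility of $\infty$ together with the monotone coupling of Lemma \ref{monotonicity} (so that $\mathbb{P}_n(\zeta_\infty<\infty)\ge\mathbb{P}_1(\zeta_\infty<\infty)>0$ uniformly in $n$) and L\'evy's $0$--$1$ law. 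Be aware that your stated reason for this last fact is wrong: under \eqref{cdipsi} coalescence pushes $N$ \emph{down}, not toward $\infty$; explosion is driven by fragmentation, and what makes it almost sure is accessibility plus the $0$--$1$ law, not \eqref{cdipsi} or the coming down from infinity of the $\Lambda$-coalescent.

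The genuine gap is the last step. The theorem asserts absorption at $0$ in finite time, and $X_t^{\mathrm{r}}(x)\to 0$ a.s.\ does not imply this; your sentence ``since $0$ is absorbing, this says precisely that there exists $t_0>0$ with $X_t^{\mathrm{r}}(x)=0$ for all $t\ge t_0$'' is false as written, and although you retract it later, you never close the hole. The missing ingredient --- and the only place where the hypothesis \eqref{cdipsi} actually enters --- is the comparison with the neutral $\Lambda$-Wright--Fisher process $Y$: under \eqref{cdipsi}, $Y$ fixes at $\{0,1\}$ in finite time a.s.\ and $\mathbb{P}_y(\tau_0^Y<\tau_1^Y)\to 1$ as $y\to 0$ by the duality with the pure $\Lambda$-coalescent. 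Restarting at the time $\tau_{1/n}$ when $X^{\mathrm{r}}$ first drops below $1/n$ (finite a.s.\ since $X_t^{\mathrm{r}}\to 0$) and using $X^{\mathrm{min}}(y)\le Y(y)$ pathwise, one gets $\mathbb{P}(X^{\mathrm{r}}\text{ never hits }0)\le\mathbb{P}_{1/n}(\tau_0^Y>\tau_1^Y)\to 0$, which is exactly the paper's $E_n$ argument. Your substitute, ``from any $x\in(0,\varepsilon)$ the process hits $0$ with probability bounded below,'' is not something you can quote as known here --- it is itself a consequence of \eqref{cdipsi} via this comparison --- and with only a uniform lower bound $\delta>0$ you would still need an additional $0$--$1$ law or renewal argument to conclude. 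Until this step is supplied, the proof establishes only convergence to $0$, not fixation at $0$.
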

\begin{proof}
By the comparison theorem, for all $x\in [0,1]$ and $t\geq 0$, $X_t^{\mathrm{min}}(x)\leq Y_t(x)$ a.s. where $(Y_t(x),t\geq 0)$ is a $\Lambda$-Wright-Fisher process with no selection. Under the condition \eqref{cdipsi}, the latter reaches $0$ with positive probability, and so does the process $(X_t^{\mathrm{min}},t\geq 0)$. 

Assume $1$ is regular non-absorbing for $(X_t^{\mathrm{r}},t\geq 0)$.  Consider the successive excursions out from $1$ of the process $(X_t^{\mathrm{r}},t\geq 0)$ which  crosses a given level $x<1$. Namely, set $\tau^{(0)}_1:=0$ and $ \tau^{(n)}_x:=\inf\{t>\tau^{(n-1)}_1: X_t^{\mathrm{r}}\leq x\}$ and $\tau^{(n)}_1:=\inf\{t>\tau^{(n)}_x: X_t^{\mathrm{r}}=1\}$. Then the processes $(X^{\mathrm{r}}_{(t+\tau_x^{(n)})\wedge \tau_1^{(n)}},t\geq 0)$ are independent and with the same law as $(X_t^{\mathrm{min}},t\geq 0)$ started from $X^{\mathrm{r}}_{\tau_x^{(n)}}\leq x$ a.s. By the comparison property, each process $(X^{\mathrm{r}}_{(t+\tau_x^{(n)})\wedge \tau_1^{(n)}},t\geq 0)$ is below a process
$(X_t^{\mathrm{min}}(x),t\geq 0)$ and since $\mathbb{P}_x(\tau_0<\tau_1)>0$, each excursion has a positive probability to hit $0$. Therefore, there exists almost surely an excursion among the latters which attains the boundary $0$. 

Assume $1$ is an entrance for $(X_t^{\mathrm{r}},t\geq 0)$. The boundary $\infty$ is therefore an exit for the process $(N^{\mathrm{min}}_t,t\geq 0)$ and by the duality relationship \eqref{dualabso}, we get for all $x\in [0,1]$,
\[\underset{t\rightarrow \infty}{\lim}\mathbb{E}[X_t^{\mathrm{r}}(x)]=\underset{t\rightarrow \infty}{\lim} \mathbb{E}[x^{N_t^{\mathrm{min}}}]=0.\]
Hence $\underset{t\rightarrow \infty}{\liminf}\ X_t^{\mathrm{r}}(x)=0$ a.s. Set $\tau_{1/n}:=\inf\{t>0: X_t^{\mathrm{r}}\leq 1/n\}$. For all $n\geq 2$, $\tau_{1/n}<\infty$ a.s. Since $1$ is an entrance boundary, 
for any $x\in [0,1)$, $(X_t^{\mathrm{r}}(x),t\geq 0)$ has the same law as $(X_t^{\mathrm{min}}(x),t\geq 0)$. By the Markov property at time $\tau_{1/n}$, $(X_{t+\tau_{1/n}}^{\mathrm{r}}(x),t\geq 0)$ has the same law $(X^{\mathrm{min}}(X^{\mathrm{r}}_{\tau_{1/n}}(x)),t\geq 0)$. Since $X^{\mathrm{r}}_{\tau_{1/n}}(x))\leq 1/n$ a.s, by the comparison theorem, $(X_{t+\tau_{1/n}}^{\mathrm{r}}(x),t\geq 0)$ is stochastically smaller than $(Y_t(1/n),t\geq 0)$ where $(Y_t(1/n),t\geq 0)$ is a $\Lambda$-Wright-Fisher process with no selection. 


Set $E_n:=\{X^{\mathrm{min}}_{t+\tau_{1/n}}>0, \forall t\geq 0\}$ for any $n\geq 2$. One has 
\[
\mathbb{P}(E_n)\leq \mathbb{P}(Y_t(1/n)>0 \text{ for all } t\geq 0)=\mathbb{P}_{1/n}(\tau^Y_0>\tau^Y_1),\]
where $\tau^Y_i:=\inf\{t\geq 0; Y_t=i\}$ for $i=0,1$. By the duality relationship for the pure $\Lambda$-coalescent: 
$\mathbb{P}_x(\tau^Y_1<\tau^Y_0)=\mathbb{E}_\infty[x^{N^{Y}_t}]\underset{x\rightarrow 0}{\longrightarrow} 0$
where we have denoted by $(N_t^{Y},t\geq 0)$ the moment dual of $(Y_t,t\geq 0)$. Thus $\mathbb{P}(E_n)\underset{n\rightarrow \infty}{\longrightarrow} 0$. Since $0$ is an absorbing boundary, $E_{n+1}\subset E_n$ for all $n\geq 2$. Hence 
$\mathbb{P}\left(\cap_{n=2}^{\infty}E_n\right)=\underset{n\rightarrow \infty}{\lim} \mathbb{P}(E_n)=0.$
This allows us to conclude since $\cup_{n=2}^{\infty} E_n^{c}$ has probability $1$ and
$\cup_{n=2}^{\infty} E_n^{c}\subset \{\exists t_0\geq 0; X_t^{\mathrm{r}}(x)=0, \ \forall t\geq t_0\}.$
\qed
%
%
\end{proof}

Until now we  have only shown theoretical results on possible extensions of the minimal process and their duality relationships with the process $(N_t,t\geq 0)$ and the stopped process $(N^{\mathrm{min}}_{t},t\geq 0)$. One may wonder whether there exist mechanisms of resampling $\Lambda$ and selection $f$ that result in the regular boundary, see the last line in Table \ref{correspondance}. It is not clear whether the easiest route to study a given process is to look at its dual or not. The aim of the next section is to gather all results known about block counting processes for simple EFCs and to translate them to results for the $\Lambda$-Wright-Fisher process with frequency-dependent selection.
\section{Sufficient conditions and explicit cases}\label{application}

We transfer the results known for the block counting process $(N_t,t\geq 0)$ to the $\Lambda$-Wright-Fisher process with  frequency-dependent selection by applying our two duality relationships \eqref{momentdual1} and \eqref{momentdual2}. Recall the correspondences stated in Table \ref{correspondance} and Table \ref{correspondancereg}. In the sequel, we work with the extension $(X_t^{\mathrm{r}},t\geq 0)$ of the minimal solution to \eqref{SDEselec}, $(X_t^{\mathrm{min}},t< \tau)$, which is constructed in Lemma \ref{recextensX}.

\subsection{Sufficient conditions for $1$ to be an exit or an entrance}
Recall the assumptions $\Lambda(\{0\})=\Lambda(\{1\})=0$. According to Lemma \ref{recextenslambda}, when the function $f$ is defective, namely  $f(1)<1$,  the $\Lambda$-WF process with selection  $(X_t,t\geq 0)$, minimal solution to \eqref{SDEselec}, has boundary $1$ entrance. 
We are interested in this section on the non-defective selection functions for which $f(1)=1$. The next theorem provides a sufficient condition on the resampling measure $\Lambda$ and the selection function $f$ entailing that the process solution to \eqref{SDEselec}, has boundary $1$ as an entrance. Recall the map $\Phi$ defined in \eqref{phi2}. Set $\Phi(x):=\Phi(\lfloor x \rfloor)$ for any $x\geq 2$. 

Introduce the following condition $\mathbb{\textbf{A}}$ over the drift function $x\mapsto x-f(x)$.

\noindent $\mathbb{\textbf{A}}$: there exists a positive function $L$ defined on $(0,1)$ such that  
\begin{center} $x-f(x)\geq L(x)$ for $x$ close enough to $1$, \end{center}
the map $h:x\mapsto \frac{L(x)}{(1-x)\log\left(1/(1-x)\right)}$ is eventually non-decreasing in the neighbourhood of $1$, $\int^{1-} \frac{1}{L(x)}\ddr x<\infty$ and
\begin{equation}\label{entrancecondu}\frac{(1-x)^2\Phi\big(1/\log(1/x)\big)}{x-f(x)}\underset{x\rightarrow 1-}{\longrightarrow} 0.
\end{equation}

\begin{remark} The first condition in  $\mathbb{\textbf{A}}$ stipulates 
that $(x-f(x))/(1-x)\log(1/(1-x))$ stays above a non-decreasing function when $x$ is close to $1$. This requires some regularity in the difference quotient of the function $f$ near $1$. 
\end{remark}
\begin{remark} Since $\Phi$ is non-decreasing, $\Phi\big(1/\log(1/x)\big)\leq \Phi\big(1/(1-x)\big)$ for any $x\in [1/2,1)$. 
\end{remark}
\begin{theorem}\label{theorementrancef}  If the function $f$ satisfies  condition $\mathrm{\textbf{A}}$, then  the boundary $1$ is an instantaneous entrance.
\end{theorem}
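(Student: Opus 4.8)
The plan is to transport the question to the dual side and invoke the exit criterion of Lemma~\ref{suffcondexitN}. Indeed, by Theorem~\ref{thmmomentduality2}(i) the boundary $1$ of $(X^{\mathrm r}_t,t\ge 0)$ is an entrance as soon as $\infty$ is an exit for $(N_t,t\ge 0)$, and by Proposition~\ref{instantanprop} it is moreover an \emph{instantaneous} entrance as soon as $\infty$ is an \emph{instantaneous} exit. Since in this section $f(1)=1$, i.e.\ $\mu(\infty)=0$, the hypothesis of Lemma~\ref{suffcondexitN} on the measure is in force, and the whole content of the proof is to show that condition $\mathbf A$ on the drift $x\mapsto x-f(x)$ forces condition $\mathbb H$ (which includes \eqref{suffcondentrancePhi}) on the summatory tail function $\ell:n\mapsto\sum_{k=1}^{n}\bar\mu(k)$.

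The bridge between the two conditions is an Abelian estimate. First I would record the elementary identity obtained by expanding $f$ and summing by parts: for $x\in(0,1)$,
\[
\mu(\mathbb N)\,\frac{x-f(x)}{1-x}\;=\;\sum_{k\ge 1}x^{k}\bar\mu(k+1)\;=\;\frac1x\sum_{m\ge 1}x^{m}\bar\mu(m)-\mu(\mathbb N),
\]
so that, up to the factor $1-x$ and harmless constants, the drift is the generating transform $x\mapsto\sum_{m\ge 1}x^{m}\bar\mu(m)$ of the tail sequence. The key lemma is then the two--sided comparison of this transform with $\ell$ evaluated at the scale $n\asymp 1/(1-x)$: there are constants $0<c\le C$ with
\[
c\,\ell\bigl(\lfloor 1/(1-x)\rfloor\bigr)\;\le\;\sum_{m\ge 1}x^{m}\bar\mu(m)\;\le\;C\,\ell\bigl(\lfloor 1/(1-x)\rfloor\bigr)\qquad\text{for }x\text{ near }1,
\]
the upper bound coming from a second summation by parts together with the monotonicity of $k\mapsto\bar\mu(k)$ and of $n\mapsto\ell(n)/n$, and the lower bound from truncating the series after $\lfloor 1/(1-x)\rfloor$ terms and using $x^{\lfloor 1/(1-x)\rfloor}\ge e^{-2}$. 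At this point one also notes that $\mathbf A$ is incompatible with $x-f(x)=O(1-x)$ near $1$ (that would force $L(x)=O(1-x)$ and hence $\int^{1-}dx/L(x)=\infty$), so $\mu$ has infinite first moment and $\ell(n)\to\infty$, a fact one uses freely below.

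Next I would install the dictionary between the continuous variable near $1$ and the discrete variable near $\infty$ through $x=1-1/n$, equivalently $u=\log\bigl(1/(1-x)\bigr)$ (so $n\sim 1/(1-x)$, $\log n\sim u$, and $1/\log(1/x)\sim 1/(1-x)\sim n$ because $-\log x\sim 1-x$; for the verification of \eqref{entrancecondu} one may further restrict to $x$ with $1/\log(1/x)$ an integer), and set $g(u):=c'\,h(1-e^{-u})$, $h$ being the function in $\mathbf A$. Then each clause of $\mathbb H$ reads off one line of $\mathbf A$: via the Abelian estimate, $\ell(n)\ge g(\log n)\log n$ for large $n$ is exactly $x-f(x)\gtrsim L(x)=(1-x)\log\bigl(1/(1-x)\bigr)h(x)$; eventual monotonicity of $g$ is that of $h$ composed with the increasing map $u\mapsto 1-e^{-u}$; the substitution $du=dx/(1-x)$ turns $\int^{\infty}\tfrac{du}{ug(u)}<\infty$ into $\int^{1-}\tfrac{dx}{L(x)}<\infty$; and, using the Abelian estimate once more, $\Phi(n)/(n\ell(n))\to 0$ becomes precisely $\dfrac{(1-x)^2\,\Phi(1/\log(1/x))}{x-f(x)}\to 0$, i.e.\ \eqref{entrancecondu}. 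Hence Lemma~\ref{suffcondexitN} applies and $\infty$ is an exit for $(N_t,t\ge 0)$; by Theorem~\ref{thmmomentduality2}(i) the boundary $1$ is then an entrance for $(X^{\mathrm r}_t,t\ge 0)$, and being an entrance means $X^{\mathrm r}_t(1)<1$ a.s.\ for every $t>0$, which by the duality~\eqref{entrancelawXr} (letting $n\to\infty$) says $\mathbb P_n(\zeta_\infty\le t)\to 1$ for all $t>0$, i.e.\ $\infty$ is an instantaneous exit; Proposition~\ref{instantanprop} then yields that $1$ is an instantaneous entrance.

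The main obstacle is the middle step: obtaining the two--sided Abelian comparison between the analytic quantity $x-f(x)$ and the arithmetic quantity $\ell(\lfloor 1/(1-x)\rfloor)$ with constants uniform in $x$, and the bookkeeping needed to confirm that the three logarithmic gauges occurring in $\mathbf A$ and in Lemma~\ref{suffcondexitN} — namely $1-x$, $-\log x=\log(1/x)$ and $\log\bigl(1/(1-x)\bigr)$ — are interchangeable in the relevant limits without spoiling the eventual monotonicity of the comparison functions $h$ and $g$.
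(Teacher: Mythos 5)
Your reduction of the statement to Lemma~\ref{suffcondexitN} is exactly the paper's route: dualize via Theorem~\ref{thmmomentduality2}, and show that condition $\mathbf{A}$ forces condition $\mathbb{H}$ for $\ell(n)=\sum_{k=1}^n\bar{\mu}(k)$. Your Abelian comparison is correct and is an elementary variant of what the paper does: the paper passes through $u(e^{-\lambda})\sim\kappa(\lambda)=\int_0^\infty(1-e^{-\lambda y})\mu(\ddr y)$ and quotes \cite[Ch.~III, Prop.~1]{Bertoin96} to get the two-sided bound \eqref{comparisonlu} at the scale $1/\log(1/x)$, whereas you obtain the same two-sided bound by summation by parts and blocking at the scale $1/(1-x)$; the gauge bookkeeping you flag ($1-x$ versus $\log(1/x)$, and evaluating \eqref{entrancecondu} along $x$ with $\lfloor 1/\log(1/x)\rfloor=n$) is routine and is essentially what the paper's change of variable does, so that part of your plan is sound and matches the published argument, including the transfer of the three clauses of $\mathbf{A}$ into the three clauses of $\mathbb{H}$ (in particular \eqref{suffcondentrancePhi}).

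The gap is in the final ``instantaneous'' step. You argue: Theorem~\ref{thmmomentduality2}(i) gives that $1$ is an entrance; ``being an entrance means $X^{\mathrm r}_t(1)<1$ a.s.\ for every $t>0$''; hence, by \eqref{entrancelawXr} and $n\to\infty$, $\mathbb{P}_n(\zeta_\infty\le t)\to 1$, i.e.\ $\infty$ is an instantaneous exit; then Proposition~\ref{instantanprop} closes the loop. But the property ``$X^{\mathrm r}_t(1)<1$ a.s.\ for all $t>0$'' is, through the very duality identity you invoke, \emph{equivalent} to $\lim_n\mathbb{P}_n(\zeta_\infty>t)=0$, i.e.\ to the instantaneous-exit property you are trying to establish; so your chain is circular unless that strong form of ``entrance'' is actually supplied by the correspondence results. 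It is not: the proofs behind Theorem~\ref{thmmomentduality2} (Lemma~\ref{correspond}) only deliver that $1$ is inaccessible and non-absorbing (the process started at $1$ leaves it with positive probability at some time), and an exit boundary $\infty$ need not be instantaneous in general — the explosion times $\zeta_\infty^{(n)}$, which decrease in $n$, could have a strictly positive limit with positive probability, in which case $\mathbb{P}(X^{\mathrm r}_t(1)=1)>0$ for small $t$. The missing ingredient is a quantitative statement about explosion under condition $\mathbb{H}$: the paper invokes \cite[Lemma 5.3]{explosion}, which shows that under $\mathbb{H}$ one has $\zeta_\infty^{(n)}\to 0$ a.s.\ as $n\to\infty$, and only then applies Proposition~\ref{instantanprop} to conclude that $1$ is an instantaneous entrance. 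Your proof needs this (or an equivalent direct estimate on $\zeta_\infty^{(n)}$) at exactly that point.
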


\begin{example} 
\begin{enumerate}
\item Let $c>0$ and $\alpha \in (0,1)$. If $x-f(x)\geq c(1-x)^{\alpha}$ for $x$ close enough to $1$ and 
if $\Phi\big(1/(1-x)\big)(1-x)^{2-\alpha}\underset{x\rightarrow 1-}{\longrightarrow} 0$,
then condition $\mathrm{\textbf{A}}$ is satisfied. This holds for instance when $\Phi(n)\underset{n\rightarrow \infty}{\sim} dn^{\beta}$ with $0<\beta<1-\alpha$ and $d>0$.
\item Let $\alpha>0$. If $x-f(x)\geq \log\left(1/(1-x)\right)^{1+\alpha}(1-x)^3$ for $x$ close enough to $1$  and \[\frac{\Phi\big(1/(1-x)\big)}{1/(1-x) \log\big(1/(1-x)\big)^{1+\alpha}}\underset{x\rightarrow 1-}{\longrightarrow} 0,\]
then  condition $\mathrm{\textbf{A}}$ is satisfied. This holds for instance when $\Phi(n)\underset{n\rightarrow \infty}{\sim} dn(\log n)^{\beta}$ with $0<\beta<1+\alpha$ and $d>0$.
\end{enumerate}
\end{example}
%

\noindent \textbf{Proof of Theorem \ref{theorementrancef}}. 
If the boundary $\infty$ of the process $(N_t,t\geq 0)$ is an exit, then Theorem \ref{thmmomentduality2} implies that the process $(X_t^{\mathrm{r}},t\geq 0)$ has boundary $1$ entrance. 
The fact that $1$ is an entrance boundary will therefore be a simple consequence of Lemma \ref{suffcondexitN}. We will use some Tauberian theorems to show that condition $\mathrm{\textbf{A}}$  for the drift function $x\mapsto x-f(x)$ entails the condition $\mathbb{H}$  in Lemma \ref{suffcondexitN}.  Recall the map $\ell :n\mapsto \sum_{k=1}^{n}\bar{\mu}(k)$.

Condition $\mathrm{\textbf{A}}$ ensures that the integral $ \int^{1-} \frac{1}{x-f(x)}\ddr x$ is finite, which implies $f'(1-)=\infty$. Set $u(x):=\mu(\mathbb{N})(x-f(x))$ for all $x\in [0,1]$ and recall $\ell(n)=\sum_{k=1}^{n}\bar{\mu}(k)$. For all $\lambda \geq 0$, set
$\kappa(\lambda):=\int_{0}^{\infty}\left(1-e^{-\lambda x}\right)\mu(\ddr x)$ with $\mu(\ddr x)=\sum_{k=1}^{\infty}\mu(k)\delta_k$. Since $\frac{1-f(x)}{1-x}\underset{x\rightarrow 1-}{\longrightarrow} f'(1-)=\infty$, \[  u(x)=\mu(\mathbb{N})(x-f(x))=\mu(\mathbb{N})\left(1-f(x)-(1-x)\right)\underset{x\rightarrow 1-}{\sim} \mu(\mathbb{N})(1-f(x)).\]
For $\lambda\geq 0$, \begin{equation}\label{equiv}
u(e^{-\lambda})\underset{\lambda \rightarrow 0}{\sim} \mu(\mathbb{N})\left(1-f(e^{-\lambda})\right)=\sum_{k=1}^{\infty}(1-e^{-\lambda k})\mu(k)=\kappa(\lambda).
\end{equation}
By \cite[Proposition 1, Chapter III]{Bertoin96}, there exists a universal constant $c>1$ such that
\[\frac{1}{c} \int_{1}^{1/\lambda}\bar{\mu}(x)\ddr x \leq \frac{\kappa(\lambda)}{\lambda}\leq c \int_{1}^{1/\lambda}\bar{\mu}(x)\ddr x,\]
where $\bar{\mu}(x)=\bar{\mu}(k)$ for any $x\in [k,k+1[$. One can check 
$\int_{1}^{1/\lambda}\bar{\mu}(x)\ddr x \underset{\lambda \rightarrow 0}{\sim} \ell(\lfloor 1/\lambda \rfloor ).$
By change of variable $\lambda=\log 1/x$ and using the equivalence \eqref{equiv}, we see that for $x$ close enough to $1$:
\begin{equation}\label{comparisonlu} \frac{1}{c}\ell\left(\left \lfloor 1/\log 1/x \right \rfloor \right)\leq \frac{u(x)}{\log 1/x}\leq c\ell\left(\left \lfloor 1/\log 1/x  \right \rfloor \right).\end{equation}
We now show that condition $\mathrm{\textbf{A}}$  entails  condition  $\mathbb{H}$. By condition $\mathrm{\textbf{A}}$, if $x$ is close enough to $1$, then $u(x)\geq \mu(\mathbb{N})L(x)$ for some function $L$ such that the map $h$ satisfying  $\mu(\mathbb{N})L(x)=(1-x)\log\left(1/(1-x)\right) h(x)$, is non-decreasing. Moreover, since $1-x \underset{x\rightarrow 1-}{\sim} \log 1/x$, there is $C>0$ such that if $x$ is close enough to $1$ then
\[u(x)\geq \mu(\mathbb{N})L(x)=(1-x)\log\left(1/(1-x)\right) h(x) \geq C (\log 1/x) (\log 1/\log 1/x) h(x).\]
By applying the upper bound in \eqref{comparisonlu} in the inequality above, we see that $$\ell\left(\lfloor 1/\log 1/x \rfloor \right)\geq C\log( 1/\log 1/x) h(x),$$ for some constant $C>0$. Thus, for large enough $n$,
$\ell(n)\geq C(\log n)g(\log n)$ with
$g$ the map such that $g(\log 1/\log 1/x):=Ch(x)$. Since by assumption $h$ is non-decreasing in a neighbourhood of $1$, the map $g$ is eventually non-decreasing. One also easily checks that $\int^{1}\frac{1}{L(x)}\ddr x<\infty$ entails $\int^{\infty}\frac{1}{xg(x)}\ddr x<\infty$. Finally, by condition \eqref{entrancecondu} and the bounds  \eqref{comparisonlu}, we see that $\underset{n\rightarrow \infty}{\lim}\  \frac{\Phi(n)}{n\ell(n)}=0$, hence Condition $\mathbb{H}$ is fulfilled.

It remains to show that the entrance at boundary $1$ is instantaneous. It has been established in \cite[Lemma 5.3]{explosion}, that under condition $\mathbb{H}$, the boundary $\infty$ of process $(N_t^{(n)},t\geq 0)$ satisfies the condition $\zeta_\infty^{(n)}\underset{n\rightarrow \infty}{\longrightarrow} 0$ a.s., where we recall $\zeta_\infty^{(n)}$ stands for the first explosion time of the process $(N_t^{(n)},t\geq 0)$. We can therefore apply Proposition \ref{instantanprop} which ensures that boundary $1$ of the dual process $(X_t^{\mathrm{r}},t\geq 0)$ is an instantaneous entrance. \qed

\begin{remark}\label{tauberianremark} Note that \eqref{comparisonlu} entails that 
$\int^{1-}\frac{\ddr x}{x-f(x)}<\infty$ if and only if $\sum_{n\geq 1}\frac{1}{n\ell(n)}<\infty$. 
We recover analytically here the equivalence between  Dynkin's condition and Doney's condition for explosion of a pure discrete branching process in continuous time whose offspring law is $\mu$ and generating function of $\mu$ is $f$, see Doney \cite{Doney1984} and e.g. \cite[Section 2.4]{explosion}.
\end{remark}

By considering resampling measures $\Lambda$ and selection functions $f$ with some properties of regular variation, we can use Tauberian theorems to relate the asymptotics of the selection function $f$ at boundary $1$, with asymptotics of the splitting measure $\mu$ at $\infty$. 

We gather here these results. Recall $u(x):=\mu(\mathbb{N})\big(x-f(x)\big)$ for any $x\in (0,1)$ and the equivalence \eqref{equiv}, $\kappa(\lambda):=\int_{0}^{\infty}(1-e^{-\lambda x})\mu(\ddr x)\underset{\lambda \rightarrow 0}{\sim} u(e^{-\lambda})$. 
Let $s$ be a slowly varying function at $\infty$. 
By the Tauberian theorem, see e.g. \cite[Chapter 0.7 page 10]{Bertoin96}, the following are equivalent: 
\begin{itemize}
\item[(i)] $\mu(n)\underset{n\rightarrow \infty}{\sim} \frac{b }{n^{\alpha+1}}s(n)$ for some $\alpha\in (0,1)$,
\item[(ii)] $\kappa(\lambda)\underset{\lambda \rightarrow 0+}{\sim} \lambda^{\alpha}\frac{b\Gamma(2-\alpha)}{\alpha(1-\alpha)}s(1/\lambda)$, 
\item[(iii)] $\mu(\mathbb{N})(x-f(x))\underset{x\rightarrow 1-}{\sim}\kappa(\log 1/x)\underset{x\rightarrow 1-}{\sim} \frac{b\Gamma(1-\alpha)}{\alpha}(1-x)^{\alpha}s\big(\frac{1}{1-x}\big).$
\end{itemize}
Similarly, if $s$ is slowly varying at $\infty$, then we have the equivalence:
\begin{itemize}
\item[(1)] $\mu(\mathbb{N})\big(x-f(x)\big)\underset{x\rightarrow 1-}{\sim}\kappa(\log 1/x)\underset{x\rightarrow 1-}{\sim} s\big(1/(1-x)\big)$,
\item[(2)] $\bar{\mu}(n)\underset{n\rightarrow \infty}{\sim} s(n)$.
\end{itemize}
%
The next theorem provides a sufficient condition over the selection function $f$ and the resampling measure $\Lambda$ for $1$ to be an absorbing boundary for the (non-stopped) process $(X_t^{\mathrm{r}},t\geq 0)$, so that $1$ is either an exit or a natural boundary. 
\begin{theorem}\label{suffcondexitf}
Assume $x\mapsto x-f(x)$ regularly varying at $1$ with index $\alpha\in [0,1)$. If \begin{equation}\label{integralform2}\int^{1-}\frac{x-f(x)}{(1-x)^3\Phi\big(1/(1-x)\big)} \ddr x<\infty,
\end{equation} then the boundary $1$ of $(X^{\mathrm{r}}_t,t\geq 0)$ is absorbing. Moreover in this case,
\begin{itemize}
\item[i)] if $\int^{\infty}\frac{\ddr x}{\Phi(x)}<\infty$, then $1$ is  an exit boundary,
\item[ii)] if $\int^{\infty}\frac{\ddr x}{\Phi(x)}=\infty$, then $1$ is a natural boundary.
\end{itemize}
\end{theorem}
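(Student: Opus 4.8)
The plan is to deduce the statement from the corresponding result for the block counting process, Lemma \ref{nonexplosioncriterion}, using the duality correspondences of Theorem \ref{thmmomentduality2} and Lemma \ref{correspond}. Recall that, by Lemma \ref{correspond}, the boundary $1$ of $(X^{\mathrm{r}}_t,t\geq 0)$ is absorbing if and only if $\infty$ is inaccessible for $(N_t,t\geq 0)$, and that, by Theorem \ref{thmmomentduality2}, $1$ is an exit (resp.\ natural) boundary for $X^{\mathrm{r}}$ if and only if $\infty$ is an entrance (resp.\ natural) boundary for $N$. Hence it suffices to show that \eqref{integralform2} implies the hypothesis $\sum_{n\geq 2}\frac{n}{\Phi(n)}\bar\mu(n)<\infty$ of Lemma \ref{nonexplosioncriterion}, and that, under that hypothesis, $\int^{\infty}\frac{\ddr x}{\Phi(x)}<\infty$ (resp.\ $=\infty$) is equivalent to $\sum_{n\geq 2}\frac{1}{\Phi(n)}<\infty$ (resp.\ $=\infty$); the conclusion then follows from Lemma \ref{nonexplosioncriterion} and the correspondences just recalled.

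First I would reuse the Tauberian comparison already established in the proof of Theorem \ref{theorementrancef}. Since $x\mapsto x-f(x)$ is regularly varying at $1$ with index $\alpha<1$, one has $\frac{1-f(x)}{1-x}\to\infty$, so $f'(1-)=\infty$ and $\ell(n):=\sum_{k=1}^{n}\bar\mu(k)\to\infty$. With $u(x):=\mu(\mathbb{N})(x-f(x))$, the equivalence $u(e^{-\lambda})\sim\kappa(\lambda)$ from \eqref{equiv}, together with \cite[Proposition~1, Chapter~III]{Bertoin96} and $1-x\sim\log(1/x)$, give, for $x$ close to $1$,
\begin{equation*}
\tfrac{1}{c}\,(1-x)\,\ell\!\left(\lfloor 1/(1-x)\rfloor\right)\ \leq\ x-f(x)\ \leq\ c\,(1-x)\,\ell\!\left(\lfloor 1/(1-x)\rfloor\right)
\end{equation*}
for some universal $c>1$ (this is exactly \eqref{comparisonlu}). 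Substituting this into \eqref{integralform2} and changing variables via $t=1/(1-x)$, and using that $\Phi$ is non-decreasing (so $\Phi\big(1/(1-x)\big)\asymp\Phi(\lfloor t\rfloor)$ and $\Phi(t)\asymp\Phi(\lfloor t\rfloor)$) and that $\ell$ is non-decreasing with $\ell(n+1)/\ell(n)\to1$, one finds that \eqref{integralform2} holds if and only if $\sum_{n\geq 2}\frac{\ell(n)}{\Phi(n)}<\infty$.

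Next I would pass from $\ell$ to $\bar\mu$. Since $\bar\mu$ is non-increasing, $\ell(n)\geq\sum_{n/2<k\leq n}\bar\mu(k)\geq\tfrac{n}{2}\bar\mu(n)$, so $\sum_{n\geq 2}\frac{\ell(n)}{\Phi(n)}<\infty$ forces $\sum_{n\geq 2}\frac{n\bar\mu(n)}{\Phi(n)}<\infty$, which is the hypothesis of Lemma \ref{nonexplosioncriterion}; it yields that $\infty$ is inaccessible for $N$, equivalently that $1$ is absorbing for $X^{\mathrm{r}}$. (The regular variation is used here only to guarantee $f'(1-)=\infty$; by Karamata's theorem it in fact gives $\ell(n)\asymp n\bar\mu(n)$, so \eqref{integralform2} and $\sum_n\frac{n\bar\mu(n)}{\Phi(n)}<\infty$ are equivalent, but only one implication is needed.) Finally, because $\Phi$ is non-decreasing, $\int^{\infty}\frac{\ddr x}{\Phi(x)}$ and $\sum_{n\geq 2}\frac{1}{\Phi(n)}$ converge or diverge together, so Lemma \ref{nonexplosioncriterion} gives that $\infty$ is an entrance boundary for $N$ in case i) and a natural boundary in case ii); translating back through Theorem \ref{thmmomentduality2} shows that $1$ is an exit boundary for $X^{\mathrm{r}}$ in case i) and a natural boundary in case ii).

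The main technical point is the passage, in the second paragraph, from the integrability of $x\mapsto (x-f(x))/\big((1-x)^3\Phi(1/(1-x))\big)$ near $1$ to a series criterion at $\infty$; however the analytic heart of it (the comparison \eqref{comparisonlu}, obtained from the Karamata Tauberian theorem and \cite{Bertoin96}) is already available from the proof of Theorem \ref{theorementrancef}, and once that and the change of variables are in place, the rest—the sum/integral comparisons for the non-decreasing functions $\Phi$ and $\ell$, the crude bound $\ell(n)\geq\tfrac n2\bar\mu(n)$, and the bookkeeping through the correspondence in Table \ref{correspondance}—is routine.
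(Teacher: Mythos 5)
Your proposal is correct, and its skeleton is the same as the paper's: reduce the statement to Lemma \ref{nonexplosioncriterion} for $(N_t,t\geq 0)$ and transfer back through Lemma \ref{correspond} and Theorem \ref{thmmomentduality2}. The only real difference is how you pass from the integral condition \eqref{integralform2} to the series condition $\sum_{n}\frac{n}{\Phi(n)}\bar\mu(n)<\infty$. The paper invokes the regular-variation hypothesis through the Tauberian equivalence $\mathrm{(i)}\Longleftrightarrow\mathrm{(iii)}$ to get $\bar\mu\big(1/(1-x)\big)\asymp x-f(x)$ and then changes variables, which yields a genuine equivalence between \eqref{integralform2} and $\sum_n\frac{n}{\Phi(n)}\bar\mu(n)<\infty$. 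You instead recycle the two-sided bound \eqref{comparisonlu}, valid as soon as $f'(1-)=\infty$, to identify \eqref{integralform2} with $\sum_n\frac{\ell(n)}{\Phi(n)}<\infty$, and then use the elementary monotonicity bound $\ell(n)\geq\frac{n}{2}\bar\mu(n)$; regular variation enters only to guarantee $f'(1-)=\infty$ (indeed $\frac{x-f(x)}{1-x}$ is regularly varying in $1/(1-x)$ with positive index $1-\alpha$, hence tends to $\infty$). This buys a slightly more elementary and marginally more general argument for the implication actually needed (it shows \eqref{integralform2} together with infinite mean already forces the series condition, without the monotone-density step), at the cost of not exhibiting the equivalence the paper's computation makes visible; your parenthetical remark that Karamata gives $\ell(n)\asymp n\bar\mu(n)$ restores that equivalence but, as you note, it again passes through the same Tauberian machinery the paper uses. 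The remaining bookkeeping (the change of variables $t=1/(1-x)$, the sum/integral comparisons using the convention $\Phi(x)=\Phi(\lfloor x\rfloor)$, and the translation of boundary types) is handled as in the paper and is fine.
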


\begin{proof}
According to Lemma \ref{nonexplosioncriterion}, if $\sum_{n=2}^{\infty}\frac{n}{\Phi(n)}\bar{\mu}(n)<\infty$, then the block counting process $(N_t,t\geq 0)$ does not explode, i.e. $\infty$ is inaccessible. By Lemma \ref{correspond}, the process $(X_t^{\mathrm{r}}(x),t\geq 0)$ has in this case its boundary $1$ absorbing. If moreover, $\sum_{n=2}^{\infty}\frac{1}{\Phi(n)}<\infty$, then the process $(N_t,t\geq 0)$ has $\infty$ as an entrance. Theorem \ref{thmmomentduality2} ensures that $1$ is an exit for $(X_t^{\mathrm{r}}(x),t\geq 0)$. If  $\sum_{n=2}^{\infty}\frac{1}{\Phi(n)}=\infty$, then $\infty$ is natural and by Theorem \ref{thmmomentduality2}, $1$ is natural.  It remains to show that the condition \eqref{integralform2} entails the convergence of the series $\sum_{n=2}^{\infty}\frac{n}{\Phi(n)}\bar{\mu}(n)$. Recall the Tauberian equivalence $\mathrm{(iii)} \Longleftrightarrow \mathrm{(i)}$.  When $x\mapsto x-f(x)$ is regularly varying at $1$ with index $\alpha\in [0,1)$, $x-f(x)\underset{x\rightarrow 1-}{\sim} c(1-x)^{\alpha}s\big(1/(1-x)\big)$ for some slowly varying function $s$ at $\infty$ and $\bar{\mu}(n)\underset{n\rightarrow \infty}{\sim} \frac{c'}{n^{\alpha}}s(n)$ for some constant $c'>0$. Hence, $\bar{\mu}\left(\frac{1}{1-x}\right)\underset{x\rightarrow 1-}{\sim} c'(1-x)^{\alpha}s\big(1/(1-x)\big) \underset{ x\rightarrow 1-}{\sim} c'' (x-f(x))$
where $c''$ is a positive constant. Simple integral comparisons 	and a change of variable give the equivalences:
\begin{align*}
\sum_{n=2}^{\infty}\frac{n}{\Phi(n)}\bar{\mu}(n)<\infty &\Longleftrightarrow \int^{1-}\frac{1/(1-x)}{\Phi\big(1/(1-x)\big)} (1-x)^{\alpha}s\big(1/(1-x)\big)\frac{\ddr x}{(1-x)^2}<\infty \\
&\Longleftrightarrow \int^{1-}\frac{x-f(x)}{(1-x)^3\Phi\big(1/(1-x)\big)} \ddr x<\infty.
\end{align*}
\qed
\end{proof}

%
\subsection{A $\Lambda$-Wright-Fisher process with selection and boundary $1$ regular.}
When the splitting measure $\mu$ and the coalescence measure $\Lambda$ satisfy certain properties of regular variation at $\infty$ and $0$, respectively, some sharp conditions  for classifying the boundary $\infty$ of the block counting process $(N_t,t\geq 0)$ have been established in \cite{explosion}. 

We transfer those results  to results for the $\Lambda$-Wright-Fisher process with selection.

\begin{theorem}\label{regularcase} 
Let $\alpha,\beta\in (0,1)$ and $\sigma,\rho>0$. Assume 
\begin{center}
$\Lambda(\ddr z)=h(z)\ddr z \text{ with } h(z)\underset{x\rightarrow 0+}{\sim}\rho z^{-\beta}$ and $\mu(\mathbb{N})\big(x-f(x)\big)\underset{x\rightarrow 1-}{\sim} \sigma (1-x)^{\alpha}$.
\end{center}  The boundary $1$ of $(X_t^{\mathrm{r}},t\geq 0)$ is classified as follows:
\begin{itemize}
\item[i)] if $\alpha+\beta<1$, then 
$1$ is an entrance boundary,
\vspace*{1mm}
\item[ii)] if $\alpha+\beta>1$, then 
$1$ is an exit boundary,
\vspace*{1mm}
\item[iii)] if $\alpha+\beta=1$ and further,
\vspace*{2mm}
\begin{itemize}
\item if $\sigma/\rho>\frac{\pi}{(2-\alpha)\sin(\pi \alpha)}$, then
$1$ is an entrance boundary,
\vspace*{2mm}
\item if $\frac{1}{(1-\alpha)(2-\alpha)}<\sigma/\rho<\frac{\pi}{(2-\alpha)\sin(\pi \alpha)}$, then
$1$ is a regular reflecting boundary,
\vspace*{2mm}
\item if $\sigma/\rho<\frac{1}{(1-\alpha)(2-\alpha)}$, then 
$1$ is an exit boundary.
\end{itemize}
\end{itemize}
\end{theorem}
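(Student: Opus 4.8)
The plan is to reduce the whole statement, via the duality relationships already established, to the classification of the boundary $\infty$ of the block counting process $(N_t,t\ge 0)$ of the simple EFC process with coalescence measure $\Lambda$ and splitting measure $\mu$, and then to feed the hypotheses \eqref{regularlambda}--\eqref{regularmu} into Lemma~\ref{stablefragtheorem}. Precisely: by Theorem~\ref{thmmomentduality2} and Table~\ref{correspondance}, $1$ is an exit (resp. an entrance, resp. natural) for $(X_t^{\mathrm r},t\ge 0)$ if and only if $\infty$ is an entrance (resp. an exit, resp. natural) for $(N_t,t\ge 0)$; and by Theorem~\ref{regularforitself} / Table~\ref{correspondancereg}, $1$ is regular reflecting for $(X_t^{\mathrm r},t\ge 0)$ if and only if $\infty$ is regular for itself for $(N_t,t\ge 0)$. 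So it suffices to show that \eqref{regularlambda} forces $\Phi(n)\sim d\,n^{1+\beta}$ and that \eqref{regularmu} forces $\mu(n)\sim b\,n^{-1-\alpha}$, and to compute $d$ and $b$ explicitly so that the dichotomies of Lemma~\ref{stablefragtheorem} (in terms of $b/d$ versus $\alpha(1-\alpha)$ and versus $\alpha\sin(\pi\alpha)/\pi$) translate into the stated dichotomies in terms of $\sigma/\rho$.

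For the first translation, write $\Phi(n)=\int_{(0,1)}\big((1-x)^n+nx-1\big)x^{-2}h(x)\,\mathrm dx$. Using $0\le (1-x)^n+nx-1\le e^{-nx}+nx-1$ on $[0,1]$, the part of the integral over $x\ge\epsilon$ is $O(n)=o(n^{1+\beta})$, so only the behaviour of $h$ near $0$ matters; on $(0,\epsilon)$ the change of variables $x=u/n$, together with $h(u/n)(u/n)^{\beta}\to\rho$ and dominated convergence with the integrable majorant $(e^{-u}+u-1)u^{-2-\beta}$ (integrable since $\beta\in(0,1)$), yields
\[
\Phi(n)\underset{n\to\infty}{\sim}\rho\,n^{1+\beta}\int_0^{\infty}(e^{-u}+u-1)u^{-2-\beta}\,\mathrm du=d\,n^{1+\beta},\qquad d=\frac{\rho\,\Gamma(1-\beta)}{\beta(1+\beta)},
\]
the value of the integral being obtained by two integrations by parts. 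For the second translation, \eqref{regularmu} together with the Tauberian equivalence (iii)$\Leftrightarrow$(i) recalled just before the statement of Theorem~\ref{regularcase} (applied with slowly varying part constant, using $\alpha\in(0,1)$) gives $\mu(n)\sim b\,n^{-1-\alpha}$ with $\sigma=\tfrac{b\Gamma(1-\alpha)}{\alpha}$, i.e. $b=\tfrac{\sigma\alpha}{\Gamma(1-\alpha)}$.

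Now apply Lemma~\ref{stablefragtheorem}. If $\alpha+\beta<1$ then $\infty$ is an exit for $(N_t,t\ge 0)$, hence $1$ is an entrance for $(X_t^{\mathrm r},t\ge 0)$; if $\alpha+\beta>1$ then $\infty$ is an entrance, hence $1$ is an exit. If $\alpha+\beta=1$, so $\beta=1-\alpha$, then the reflection formula $\Gamma(\alpha)\Gamma(1-\alpha)=\pi/\sin(\pi\alpha)$ gives
\[
\frac{b}{d}=\frac{\sigma\,\alpha(1-\alpha)(2-\alpha)}{\rho\,\Gamma(\alpha)\Gamma(1-\alpha)}=\frac{\sigma}{\rho}\cdot\frac{\alpha(1-\alpha)(2-\alpha)\sin(\pi\alpha)}{\pi}.
\]
Consequently $b/d>\alpha(1-\alpha)$ is equivalent to $\sigma/\rho>\frac{\pi}{(2-\alpha)\sin(\pi\alpha)}$, $b/d<\frac{\alpha\sin(\pi\alpha)}{\pi}$ is equivalent to $\sigma/\rho<\frac{1}{(1-\alpha)(2-\alpha)}$, and the intermediate range $\frac{\alpha\sin(\pi\alpha)}{\pi}<b/d<\alpha(1-\alpha)$ is exactly $\frac{1}{(1-\alpha)(2-\alpha)}<\sigma/\rho<\frac{\pi}{(2-\alpha)\sin(\pi\alpha)}$. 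In the first case Lemma~\ref{stablefragtheorem} gives $\infty$ exit, hence $1$ entrance; in the last case $\infty$ entrance, hence $1$ exit; in the intermediate case $\infty$ is regular for $(N_t,t\ge 0)$, hence $1$ is regular non-absorbing for $(X_t^{\mathrm r},t\ge 0)$, and since in this regime the analysis of \cite{explosion} actually shows $\infty$ to be regular for itself for $(N_t,t\ge 0)$, Theorem~\ref{regularforitself} upgrades the conclusion to: $1$ is regular reflecting.

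The two steps I expect to require the most care are (a) the exact identification of the constant $d$: one must justify the uniform integrability in the $x=u/n$ scaling and verify that the mass of $\Lambda$ away from $0$ contributes only $O(n)$, so that only $h(z)z^{\beta}\to\rho$ enters; and (b) in the critical line $\alpha+\beta=1$, the upgrade from ``$\infty$ regular'' to ``$\infty$ regular for itself'' for $N$, which is the finer information needed to obtain \emph{regular reflecting} for $1$ rather than merely regular non-absorbing — the coarse correspondence of Table~\ref{correspondance} does not by itself distinguish the two, so one genuinely needs the description of the near-$\infty$ dynamics of $N$ from \cite{explosion}.
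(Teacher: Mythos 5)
Your proposal is correct and follows essentially the same route as the paper: translate \eqref{regularlambda}--\eqref{regularmu} via Tauberian arguments into $\Phi(n)\sim d n^{1+\beta}$ and $\mu(n)\sim b n^{-1-\alpha}$ with exactly the constants $d=\rho\Gamma(1-\beta)/\beta(1+\beta)$ and $b=\sigma\alpha/\Gamma(1-\alpha)$, apply the classification of the boundary $\infty$ of $N$ from \cite{explosion} (Lemma \ref{stablefragtheorem}) together with the duality correspondences of Theorem \ref{thmmomentduality2}, and in the critical regime invoke the regular-for-itself property of $\infty$ established in \cite{explosion} (the paper cites its Proposition 3.9) combined with Theorem \ref{regularforitself} to upgrade ``regular non-absorbing'' to ``regular reflecting''. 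The only cosmetic difference is that the paper handles cases i) and ii) through its own sufficient conditions (Theorems \ref{theorementrancef} and \ref{suffcondexitf}) and cites \cite{cdiEFC} for the asymptotics of $\Phi$, whereas you argue directly from Lemma \ref{stablefragtheorem} and compute the constant $d$ by hand; both are valid and yield the same dichotomies in $\sigma/\rho$.
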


\begin{proof} 
Tauberian theorems ensure that the conditions over $\Lambda$ and $f$ are equivalent to
$\Phi(n)\underset{n\rightarrow \infty}{\sim} d n^{\beta+1}
\text{ and } 
\mu(n)\underset{n\rightarrow \infty}{\sim} \frac{b}{n^{1+\alpha}}$
with 
$d:=\frac{\Gamma(1-\beta)}{\beta(1+\beta)}\rho \text{ and } b:=\frac{\alpha}{\Gamma(1-\alpha)}\sigma,$
see e.g. \cite[Section 2.2, page 12]{cdiEFC} for the equivalent of function $\Phi$.
Cases i) and ii) are obtained by applying respectively Theorem \ref{theorementrancef} and  Theorem \ref{suffcondexitf}. \cite[Theorem 3.7]{explosion} classifies the cases when $\alpha=1-\beta$ according to the ratio $b/d$. The three cases in iii) are obtained by noticing that $\sigma/\rho>\frac{\pi}{(2-\alpha)\sin(\pi \alpha)}$ is equivalent to $\frac{b}{d}>\alpha(1-\alpha)$ and $\sigma/\rho>\frac{1}{(1-\alpha)(2-\alpha)}$ is equivalent to $b/d>\frac{\alpha \sin(\pi \alpha)}{\pi}$. In particular, when $\frac{1}{(1-\alpha)(2-\alpha)}<\sigma/\rho<\frac{\pi}{(2-\alpha)\sin(\pi \alpha)}$, the process $(N_t,t\geq 0)$ has $\infty$ as a regular non-absorbing boundary. Therefore, the process $(N^{\mathrm{min}}_t,t\geq 0):=(N_{t\wedge \zeta_{\infty}},t\geq 0)$ has $\infty$ as a regular absorbing boundary, and Theorem \ref{thmmomentduality2} ensures that the process $(X_t^{\mathrm{r}},t\geq 0)$ has boundary $1$ regular \textit{non-absorbing}.  We now need to check that the boundary $1$ is regular reflecting. \cite[Proposition 3.9]{explosion} ensures that the dual process $(N_t,t\geq 0)$ has boundary $\infty$ regular for itself, the fact that $1$ is reflecting is a consequence of Theorem \ref{regularforitself}. 
 \qed
\end{proof}
We now precise the behavior of the process $(X_t^{\mathrm{r}},t\geq 0)$ at its boundary $1$ when it is regular reflecting by showing that the boundary is regular for itself.
\begin{theorem}\label{regularforitselfstable} Let $\sigma,\rho>0$ and $\alpha \in (0,1)$.
Assume 
$\Lambda(\ddr z)=h(z)\ddr z \text{ with } h(z)\underset{x\rightarrow 0+}{\sim}\rho z^{-(1-\alpha)}$ and $\mu(\mathbb{N})\big(x-f(x)\big)\underset{x\rightarrow 1-}{\sim} \sigma (1-x)^{\alpha}$.
If $\frac{1}{(1-\alpha)(2-\alpha)}<\sigma/\rho<\frac{\pi}{(2-\alpha)\sin(\pi \alpha)}$, then $(X_t^{\mathrm{r}},t\geq 0)$ has its boundary $1$  regular for itself.
\end{theorem}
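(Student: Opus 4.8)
The plan is to transfer the statement to the block counting process $N$ via Theorem \ref{regularforitself}, which asserts that $1$ is regular for itself for $(X_t^{\mathrm{r}},t\geq 0)$ if and only if $\infty$ is regular reflecting for $(N_t,t\geq 0)$; so it suffices to establish the latter. First I would record, using the Tauberian equivalences stated before Theorem \ref{suffcondexitf} and the constants computed in the proof of Theorem \ref{regularcase}, that the hypotheses $h(z)\sim\rho z^{-(1-\alpha)}$ and $\mu(\mathbb{N})(x-f(x))\sim\sigma(1-x)^{\alpha}$ are equivalent to $\Phi(n)\sim d n^{2-\alpha}$ and $\mu(n)\sim b n^{-(1+\alpha)}$ with $d=\frac{\Gamma(\alpha)}{(1-\alpha)(2-\alpha)}\rho$ and $b=\frac{\alpha}{\Gamma(1-\alpha)}\sigma$, and that the constraint $\frac{1}{(1-\alpha)(2-\alpha)}<\sigma/\rho<\frac{\pi}{(2-\alpha)\sin(\pi\alpha)}$ is exactly $\frac{\alpha\sin(\pi\alpha)}{\pi}<b/d<\alpha(1-\alpha)$, i.e. the critical regime $\alpha+(1-\alpha)=1$ of Lemma \ref{stablefragtheorem} in which $\infty$ is regular for $N$. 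By Theorem \ref{regularcase}(iii) we already know $\infty$ is regular non-absorbing for $N$, and, as in the proof of Theorem \ref{regularcase}, that $\infty$ is regular for itself for $N$ by \cite[Proposition 3.9]{explosion}. Hence the only missing ingredient is that $\infty$ is moreover \emph{reflecting} for $N$, i.e. that the random closed set $\overline{\{t>0:\ N_t^{(\infty)}=\infty\}}$ has zero Lebesgue measure almost surely.

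To get this, one can reformulate it through duality. By Tonelli's theorem the zero-measure property holds as soon as $\mathbb{P}(N_t^{(\infty)}<\infty)=1$ for Lebesgue-almost every $t>0$, and the relationship \eqref{dualEFC} rewrites $\mathbb{P}(N_t^{(\infty)}<\infty)=\lim_{x\to 1-}\mathbb{P}_x(\tau_1\leq t)$, where $\tau_1$ is the first hitting time of $1$ by the minimal process; so the statement to prove is $\lim_{x\to 1-}\mathbb{P}_x(\tau_1\leq t)=1$ (then, via the Feller property of $(X_t^{\mathrm{r}})$ and the Markov property at a small time, for all $t>0$, exactly as in the proof of Theorem \ref{regularforitself}). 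This amounts to $\infty$ being a non-sticky, instantaneously reflecting boundary of $N$, which is the content of the fine stable-regime analysis of \cite{explosion}: there the block counting process is described, after a Lamperti-type time change, by a positive self-similar Markov process whose recurrent extension from its boundary is, precisely when $\frac{\alpha\sin(\pi\alpha)}{\pi}<b/d<\alpha(1-\alpha)$, instantaneously reflecting, the relevant criterion being the vanishing of the drift of (equivalently, an integrability property of) the L\'evy process underlying the Lamperti representation. Feeding ``$\infty$ regular reflecting for $N$'' back into Theorem \ref{regularforitself} yields that $1$ is regular for itself for $(X_t^{\mathrm{r}},t\geq 0)$.

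I expect the main obstacle to be exactly this last step. It cannot be reached by the soft duality arguments of the previous sections: a boundary may be regular for itself while being sticky, so knowing from Theorem \ref{regularcase} and \cite[Proposition 3.9]{explosion} that $\infty$ is both regular for itself and regular non-absorbing for $N$ does not suffice, and ``regular non-absorbing'' in isolation only gives $\mathbb{P}(N_t^{(\infty)}<\infty)>0$ for some $t$. What is needed is the quantitative description of the entrance law from $\infty$ and of the occupation time at $\infty$ coming from the self-similar/L\'evy-process picture of $N$ in the critical case $\alpha+\beta=1$ from \cite{explosion}, together with the observation that the prescribed range for $\sigma/\rho$ (equivalently $b/d$) is precisely the one placing the associated L\'evy process in the drift-free regime that rules out stickiness.
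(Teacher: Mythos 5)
Your reduction is correct as far as it goes: by Theorem \ref{regularforitself}, it suffices to show that $\infty$ is regular \emph{reflecting} for $(N_t,t\geq 0)$, equivalently that $\underset{x\rightarrow 1-}{\lim}\mathbb{P}_x(\tau_1\leq t)=1$ for all $t>0$, and you correctly note that the combination of Theorem \ref{regularcase} and \cite[Proposition 3.9]{explosion} (which give $\infty$ regular non-absorbing and regular for itself for $N$) does not yield this, since a regular-for-itself boundary could a priori be sticky. The genuine gap is that the ingredient you then invoke — a Lamperti/self-similar Markov process description of $N$ in the critical regime, with a drift-free Lévy process ruling out stickiness — is not available: the reflecting nature of the boundary $\infty$ of $N$ in the regime $\frac{\alpha\sin(\pi\alpha)}{\pi}<b/d<\alpha(1-\alpha)$ is precisely the question raised and left unaddressed in \cite[page 31]{explosion}, and in this paper it is obtained only as Corollary \ref{inftyregularreflecting}\,i), \emph{as a consequence of} Theorem \ref{regularforitselfstable} combined with Theorem \ref{regularforitself}. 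So your argument, as written, either appeals to an unproved result or runs in a circle; the quantitative step that forces $\mathbb{P}(N_t^{(\infty)}<\infty)=1$ is exactly what remains to be supplied.

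The paper closes this gap by working directly on the Wright--Fisher side rather than on $N$: it takes the Lyapunov function $g(x)=(1-x)^{\delta}$ with $\delta=1-\alpha$, splits $\mathcal{A}=\mathcal{A}^{+}+\mathcal{A}^{-}$, and uses the regular variation hypotheses ($h(z)\sim\rho z^{\alpha-1}$ near $0$ and $x-f(x)\asymp(1-x)^{\alpha}$ near $1$) to show that $\mathcal{A}^{\mathrm{s}}g(x)\leq -c<0$ for $x$ close to $1$. By the criterion of \cite[Proposition 6.3.2, page 281]{MarkovProcessesSemigroupsandGenerators} this gives $\mathbb{P}_x(\sigma_1\leq t)\underset{x\rightarrow 1-}{\longrightarrow}1$ for every $t>0$, and the monotone pointwise convergence $X^{\mathrm{r}}_t(x)\uparrow X^{\mathrm{r}}_t(1)$ of Lemma \ref{almostsureinx} then transfers this to the process started at $1$, yielding $\sigma_1=0$ $\mathbb{P}_1$-a.s. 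If you want to salvage your route, you would need to prove the analogous quantitative estimate on the $N$ side from scratch (an entrance/occupation analysis at $\infty$ in the critical case), which is not easier than the direct generator computation; note also that only the upper bound $\sigma/\rho<\frac{\pi}{(2-\alpha)\sin(\pi\alpha)}$ (equivalently $b/d>\frac{\alpha\sin(\pi\alpha)}{\pi}$, i.e.\ $1$ accessible) is actually needed for the Lyapunov argument, the drift estimate coming from the asymptotics of $x-f(x)$ and $\Lambda$ alone.
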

\begin{remark}
Since the process $(X_t^{\mathrm{r}},t\geq 0)$ is Feller,  when boundary $1$ is regular reflecting and regular for itself, standard theory, see e.g. \cite[Chapter IV]{Bertoin96} ensures the existence of a local time of the process $(X_t^{\mathrm{r}},t\geq 0)$ at $1$ whose inverse subordinator has no drift.
\end{remark}
\begin{proof} Recall $\mathcal{A}$ in \eqref{generator} and $\mathcal{A}^{\mathrm{s}}$ in \eqref{generatorWFs}.  Let $\epsilon>0$. We look for a positive continuous function on $[0,1]$, $g$ which satisfies the following conditions: $g\in C^{2}([0,1])$, $g(1)=0$ and there is $c>0$, such that $\mathcal{A}^{\mathrm{s}}g(x)\leq -c$ for any $x\in (1-\epsilon,1)$. 
By applying \cite[Proposition 6.3.2, page 281]{MarkovProcessesSemigroupsandGenerators}, we will have for any $t>0$, $\mathbb{P}_x(\sigma_1\leq t)\underset{x\rightarrow 1-}{\longrightarrow} 1.$
Observe that
\begin{align*}
\mathcal{A}g(x)= & \ x\int_{0}^{1}\big(g(x+z(1-x))-g(x)-z(1-x)g'(x)\big) z^{-2}\Lambda(\ddr z)\\
&+(1-x)\int_{0}^{1}\big(g(x(1-z))-g(x)+zxg'(x)\big) z^{-2}\Lambda(\ddr z)\\
&:=\mathcal{A}^{+}g(x)+\mathcal{A}^{-}g(x).
\end{align*}
Recall $\mathcal{A}^{\mathrm{s}}g(x)=\mathcal{A}g(x)+\mu(\mathbb{N})(f(x)-x)g'(x)$ for al $x\in [0,1]$, see  \eqref{generatorWFs}. 
Note that for any function $g\in C^{2}((0,1))$, one has for any $y, u$,
\[g(y+u)-g(y)-ug'(y)=u^2\int_0^1 g''(y+vu)(1-v)\ddr v.\]
Let $g(x)= (1-x)^\delta$ for $0<x<1$ and $0<\delta<1$. Plainly, 
\[g(1)=0,\quad g'(x)= -\delta(1-x)^{\delta-1}<0, \quad g''(x)=\delta(\delta-1)(1-x)^{\delta-2}<0.\]
Since by assumption $\mu(n)\underset{n\rightarrow \infty}{\sim} \frac{b}{n^{1+\alpha}}$, then by Remark \ref{tauberianremark}-ii), we see that $x-f(x) \underset{x\rightarrow 1-}{\sim} C\log(1/x)^{\alpha}.$
Hence, the generating function $f$ satisfies 
$\limsup_{x\rightarrow 1-}\frac{x-f(x)}{(1-x)^\alpha}<\infty$.  Choosing $\delta=1-\alpha$, we have
$(f(x)-x)g'(x)
=-\delta(f(x)-x)(1-x)^{\delta-1}<c$
for some $c>0$ and all $x$ close to $1$. In addition, for all $x\in [0,1]$,
\begin{equation}\label{A+}
	\begin{split}
\mathcal{A}^{+}g(x)&=x\int_0^1 (g(x+z(1-x))-g(x)-z(1-x) g'(x))z^{-2}\Lambda(\ddr z) \\
&=\delta(\delta-1)x(1-x)^2\int_0^1 \Lambda(\ddr z)\int_0^1(1-x-vz(1-x) )^{\delta-2}(1-v)\ddr v\\
&=\delta(\delta-1)x(1-x)^\delta\int_0^1\Lambda(\ddr z)\int_0^1(1-vz )^{\delta-2}(1-v)\ddr v<0,
\end{split}
\end{equation}
and for all $x$ close to $1$,
\begin{equation}\label{A-}
	\begin{split}
	\mathcal{A}^{-}g(x)&=(1-x)\int_0^1(g(x(1-z))-g(x)+zxg'(x))z^{-2}\Lambda(\ddr z) \\
	&=\delta(\delta-1)(1-x)x^2	\int_0^1 \Lambda(\ddr z)\int_0^1 (1-x+vxz )^{\delta-2}(1-v)\ddr v\\
    &\leq c_1(\delta-1)(1-x)\int_0^1 (1-x+z)^{\delta-2}  \Lambda(\ddr z)\int_0^1(1-v)\ddr v  \\
    &\leq  c_2(\delta-1)(1-x)	\int_0^{1-x} (1-x+z)^{\delta-2}  \Lambda(\ddr z)  \\
    &\leq -c_3(1-x)(1-x)^{\delta-2}\int_0^{1-x}\Lambda(\ddr z) \leq -c_3(1-x)^{\delta-1}\int_0^{1-x}\Lambda(\ddr z),
\end{split}
\end{equation}		
where $c_i, i\in \{1,2,3\}$ are positive constants. By assumption  $\Lambda(\ddr z)=h(z)\ddr z$ with $h(z)\underset{z\rightarrow 0}{\sim} \rho z^{\alpha-1}$. Thus, $\int_0^{1-x}\Lambda(\ddr z)\underset{x\rightarrow 1-}{\sim} C(1-x)^{\alpha}$ for some constant $C>0$. Combining \eqref{A+} and \eqref{A-}, and recalling that 
$\delta=1-\alpha$, we see that  $\underset{x\rightarrow 1-}{\limsup}\ A^sg(x)\leq -c$ for some positive constant $c$. By Lemma \ref{almostsureinx}, for any $t>0$, $X^{\mathrm{r}}_t(x)\underset{x\rightarrow 1-}{\longrightarrow} X^{\mathrm{r}}_t(1-)=X^{\mathrm{r}}_t(1)$ almost surely. Therefore $\mathbb{P}_{1}(\sigma_1\leq t)=\underset{x\rightarrow 1-}{\lim}\ \mathbb{P}_x(\sigma_1\leq t)=1$, and since $t$ is arbitrary small, $\sigma_1=0$, $\mathbb{P}_1$-a.s, i.e. $1$ is regular for itself.
\qed
\end{proof}

We finally state a corollary of Theorem \ref{regularforitselfstable} for the  block counting process $(N_t,t\geq 0)$ of a simple EFC process $(\Pi(t),t\geq 0)$ whose splitting measure $\mu$ and coalescence measure $\Lambda$ are regularly varying. By Theorem \ref{Markovblockcounting}, $(N_t,t\geq 0):=(\#\Pi(t),t\geq 0)$ is a Markov process with state-space $\bar{\mathbb{N}}$. The first assertion i) below answers a question raised but left unadressed in \cite[page 31]{explosion}.
\begin{corollary}\label{inftyregularreflecting} Let $\alpha\in (0,1)$. Assume $\Phi(n)\underset{n\rightarrow \infty}{\sim} dn^{2-\alpha}$ with $d>0$ and $\mu(n)\underset{n\rightarrow \infty}{\sim} \frac{b}{n^{1+\alpha}}$ with $b>0$. 
\begin{itemize}
\item[i)] If $\frac{\alpha \sin(\pi \alpha)}{\pi}<b/d<\alpha(1-\alpha),$ then the boundary $\infty$ of the process $(\#\Pi(t),t\geq 0)$ is regular reflecting.
\item[ii)] If $b/d<\alpha(1-\alpha)$, then the process $(\#\Pi(t),t\geq 0)$ is positive recurrent and admits a stationary distribution carried over $\mathbb{N}$.
\end{itemize}
\end{corollary}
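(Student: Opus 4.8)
The plan is to deduce both assertions from the correspondences of Table~\ref{correspondance} and Table~\ref{correspondancereg} together with Theorems~\ref{regularcase}, \ref{regularforitselfstable} and \ref{rec}. First I would observe, exactly as in the proof of Theorem~\ref{regularcase} and using $\Gamma(\alpha)\Gamma(1-\alpha)=\pi/\sin(\pi\alpha)$, that the hypotheses $\Phi(n)\sim dn^{2-\alpha}$ and $\mu(n)\sim b/n^{1+\alpha}$ are, via Tauberian theorems, the case $\beta=1-\alpha$ of the regularly varying setting of Theorem~\ref{regularcase}, with $b/d>\tfrac{\alpha\sin(\pi\alpha)}{\pi}\Leftrightarrow\sigma/\rho>\tfrac{1}{(1-\alpha)(2-\alpha)}$ and $b/d<\alpha(1-\alpha)\Leftrightarrow\sigma/\rho<\tfrac{\pi}{(2-\alpha)\sin(\pi\alpha)}$; I would also note that $\Phi(n)\sim dn^{2-\alpha}$ with $2-\alpha>1$ forces \eqref{cdipsi}, and that it provides the asymptotics of $\int_0^z\Lambda(\ddr w)$ near $0$ that the proof of Theorem~\ref{regularforitselfstable} actually uses, so that theorem applies here although it is stated with a density hypothesis.

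For i), in the regime $\tfrac{\alpha\sin(\pi\alpha)}{\pi}<b/d<\alpha(1-\alpha)$, case iii) of Theorem~\ref{regularcase} gives that $(N_t,t\ge0)$ has $\infty$ regular non-absorbing while $(X^{\mathrm{r}}_t,t\ge0)$ has $1$ regular reflecting, and Theorem~\ref{regularforitselfstable} upgrades the latter to: $1$ is regular for itself for $(X^{\mathrm{r}}_t,t\ge0)$. I would then apply Theorem~\ref{regularforitself}: ``$1$ regular for itself for $X^{\mathrm{r}}$'' is equivalent to ``$\infty$ regular reflecting for $(N_t,t\ge0)$'', and since $(N_t,t\ge0)=(\#\Pi(t),t\ge0)$ is a Feller process on $\bar{\mathbb{N}}$ by Theorem~\ref{Markovblockcounting}, this is exactly assertion i).

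For ii), assume $b/d<\alpha(1-\alpha)$. Lemma~\ref{stablefragtheorem} in the critical case $\alpha+\beta=1$ shows that $\infty$ is non-absorbing for $(N_t,t\ge0)$ — an entrance if $b/d<\tfrac{\alpha\sin(\pi\alpha)}{\pi}$, regular non-absorbing if $b/d>\tfrac{\alpha\sin(\pi\alpha)}{\pi}$ — so Theorem~\ref{rec} applies and $(N_t,t\ge0)$ is positive recurrent with a stationary distribution of generating function $\varphi\colon x\mapsto\mathbb{P}_x(\tau_1<\tau_0)$. By the remark after Theorem~\ref{rec}, the stationary law is carried by $\mathbb{N}$ if and only if $\varphi(1-)=1$, and this is what remains to establish. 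Since $1$ is absorbing for $X^{\mathrm{min}}$ one has $\{\tau_1\le t\}\subseteq\{\tau_1<\tau_0\}$, while the duality \eqref{dualEFC} gives $\mathbb{P}_x(\tau_1\le t)=\mathbb{P}(X^{\mathrm{min}}_t(x)=1)=\mathbb{E}[x^{N_t^{(\infty)}}]\to\mathbb{P}(N_t^{(\infty)}<\infty)$ as $x\to1-$. When $\infty$ is an entrance this limit is $1$ for every $t>0$ by definition; when $\infty$ is regular non-absorbing, part i) gives that $\infty$ is regular reflecting, whence $\mathbb{P}(N_t^{(\infty)}<\infty)=1$ for every $t>0$ as well (this is recorded inside the proof of Theorem~\ref{regularforitself}). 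In both cases $\mathbb{P}_x(\tau_1\le t)\to1$, so $\varphi(x)\ge\mathbb{P}_x(\tau_1\le t)\to1$ and $\varphi(1-)=1$.

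The one step that goes beyond reading off the correspondence tables is this last identification of $\varphi(1-)$ in ii): turning the abstract positive-recurrence statement of Theorem~\ref{rec} into the absence of an atom at $\infty$ in the stationary law. It rests on $\mathbb{P}(N_t^{(\infty)}<\infty)=1$ for every $t>0$, which is immediate when $\infty$ is an entrance but, in the regular regime, genuinely uses part i), i.e.\ that $\infty$ is \emph{reflecting} and not merely regular non-absorbing. All the rest is bookkeeping through Table~\ref{correspondance}, Table~\ref{correspondancereg} and the Tauberian translation of the hypotheses.
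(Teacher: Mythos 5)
Your proof is correct and follows essentially the same route as the paper: part i) is obtained exactly as there, by the Tauberian translation of the hypotheses followed by Theorem \ref{regularforitselfstable} (boundary $1$ regular for itself for $X^{\mathrm{r}}$) and the correspondence of Theorem \ref{regularforitself}, while part ii) rests on Theorem \ref{rec}. The paper's proof consists of just these citations; your two additions---verifying non-defectiveness of the stationary law via $\varphi(x)\ge\mathbb{P}_x(\tau_1\le t)=\mathbb{E}[x^{N_t^{(\infty)}}]\underset{x\rightarrow 1-}{\longrightarrow}\mathbb{P}(N_t^{(\infty)}<\infty)=1$ in both the entrance and the reflecting regimes, and noting that the proof of Theorem \ref{regularforitselfstable} only needs the asymptotics of $\int_0^{1-x}\Lambda(\ddr z)$, so the density hypothesis is not an obstruction when starting from $\Phi(n)\sim dn^{2-\alpha}$---are sound and merely make explicit what the paper compresses into ``consequence of Theorem \ref{rec}'' and into its Tauberian step.
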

\begin{proof}
By Theorem \ref{regularforitselfstable}, the boundary $1$ of $(X_t^{\mathrm{r}},t\geq 0)$ is regular for itself. Then statement i) follows  by applying Theorem \ref{regularforitself}. Statement ii) is a consequence of Theorem \ref{rec}. \qed
\end{proof}

Other explicit cases have been found in \cite{explosion} and have their counterparts for the dual $\Lambda$-Wright-Fisher processes with frequency-dependent selection. We refer for instance to \cite[Theorem 3.11]{explosion} for the case $\Phi(n)\underset{n\rightarrow \infty}{\sim} d n(\log n)^{\beta}$ and $\mu(n)\underset{n\rightarrow \infty}{\sim} b \frac{(\log n)^{\alpha}}{n^2}$.

We finally mention a question that has not been addressed in the present article. The equivalence stated in Theorem \ref{regularforitself} entails that if one of the processes has its boundary \textit{irregular for itself} then the other process has its boundary \textit{sticky}, in the sense that the process stays at the boundary a set of times of positive Lebesgue measure.  More precisely, for the process $(X_t^{\mathrm{r}}(x),t\geq 0)$, the duality relation \eqref{dualabso} implies that for any time $t\geq 0$,  $\underset{n\rightarrow \infty}{\lim}\mathbb{E}[x^{N_t^{\mathrm{min},(n)}}]=\mathbb{P}(X_t^{\mathrm{r}}(x)=1)$, which might be positive if the process $(N^{(\infty)}_t,t\geq 0)$ does not return to $\infty$ instantaneously i.e. $\underset{n\rightarrow \infty}{\lim}\mathbb{P}_n(\zeta_\infty> t)>0$ for some $t>0$. Whether there are some resampling measure $\Lambda$ and selection function $f$ for which boundaries $1$ or $\infty$ are indeed sticky or irregular for themselves remains an unsolved question. 

\noindent \textbf{Acknowledgements:} C.F's research is partially  supported by LABEX MME-DII (ANR11-LBX-0023-01). X.Z.'s research is supported by Natural Sciences and Engineering Research Council of Canada (RGPIN-2016-06704) and by  National Natural Science Foundation of China (No.\  11731012).
\providecommand{\MR}{\relax\ifhmode\unskip\space\fi MR }
\providecommand{\MRhref}[2]{%
  \href{http://www.ams.org/mathscinet-getitem?mr=#1}{#2}
}

\end{document}